\numberwithin{equation}{section}
\renewcommand{\div}{\operatorname{div}}
\newcommand{\supp}{\operatorname{supp}}
\renewcommand{\leq}{\leqslant}
\renewcommand{\geq}{\geqslant}
\newcommand{\vstar}{v_*}
\newcommand{\vperp}{v_*^{\mathcal C}}
\newtheorem{theorem}{Theorem}[section]
\newtheorem{corollary}[theorem]{Corollary}
\newtheorem{lemma}[theorem]{Lemma}
\newtheorem{proposition}[theorem]{Proposition}
\newtheorem{remark}[theorem]{Remark}
\title{Optimal boundary control for steady motions of a self-propelled body in a Navier-Stokes liquid} 
\author[1]{Toshiaki Hishida\thanks{T. Hishida is partially supported by Grant-in-Aid for Scientific Research, 18K03363, from JSPS.}}
\affil[1]{Graduate School of Mathematics, Nagoya University, Nagoya 464-8602 Japan
\tt{hishida@math.nagoya-u.ac.jp}}
\author[2]{Ana Leonor Silvestre\thanks{A. L. Silvestre acknowledges the financial support of the Portuguese FCT - Funda\c{c}\~ao para a Ci\^{e}ncia e a Tecnologia, through the projects UIDB/04621/2020 and UIDP/04621/2020 of CEMAT/IST-ID.}}
\affil[2]{CEMAT and Department of Mathematics,
Instituto Superior T\'{e}cnico, Universidade de Lisboa,
Av. Rovisco Pais 1, 1049-001 Lisboa, Portugal
\tt{Ana.Silvestre@math.tecnico.ulisboa.pt}}
\author[3]{Tak\'eo Takahashi\thanks{T. Takahashi is partially supported by the project IFSMACS ANR-15-CE40-0010, financed by the French Agence Nationale de la Recherche.}}
\affil[3]{Universit\'e de Lorraine, CNRS, Inria, IECL, F-54000 Nancy, France
\tt{takeo.takahashi@inria.fr}}
\date{\today}
\begin{document}
\maketitle

\abstract{
Consider a rigid body ${\mathcal S} \subset {\mathbb R}^3$ immersed in an infinitely extended Navier-Stokes liquid and the motion of the 
 body-fluid interaction system described from a reference frame attached to ${\mathcal S}$. We are interested in steady motions of this coupled system, where the region occupied by the fluid is the exterior domain $\Omega = {\mathbb R}^3 \setminus {\mathcal S}$. This paper deals with the problem of using boundary controls $v_*$, acting on the whole $\partial\Omega$ or just on a portion $\Gamma$ of $\partial\Omega$, to generate a self-propelled motion of ${\mathcal S}$ with a target velocity $V(x):=\xi+\omega \times x$ and  to  minimize the drag about ${\mathcal S}$. Firstly, an appropriate drag functional is derived from the energy equation of the fluid and the problem is formulated as an optimal boundary control problem.  Then the minimization problem is solved for localized controls, such that supp $v_*\subset \Gamma$, and for tangential controls, i.e, $v_*\cdot n|_{\partial \Omega}=0$, where $n$ is the outward unit normal to $\partial \Omega$. We prove the existence of optimal solutions, justify the G\^ateaux derivative of the  control-to-state map, establish the well-posedness of the corresponding adjoint equations and,
 finally, derive the first order optimality conditions. The results are obtained under smallness restrictions on the objectives $|\xi|$ and $|\omega|$ and on the boundary controls.
}

\vspace{1cm}

\noindent {\bf Keywords:} 3-D Navier-Stokes equations; Exterior domain; Rotating body; Self-propelled motion; Boundary control; Drag reduction.

\noindent {\bf 2010 Mathematics Subject Classification.}  76D05, 49K21, 76D55, 49J21
%
%
%
%
%
%

\tableofcontents

\section{Introduction}

Consider a rigid body $\mathcal{S}$ moving by self-propulsion through an infinitely extended incompressible viscous fluid $\mathcal{F}$. This means that  the total net force and torque, external to the system $\{ \mathcal{S}, \mathcal{F} \}$, acting on $\mathcal{S}$, are identically zero. Since the shape of the body is constant, the self-propelled motion of $\mathcal{S}$ through $\mathcal{F}$ is due to the boundary values for the velocity 
of the system at the boundary of $\mathcal{S}$, which are prescribed relative to $\mathcal{S}$. For instance, the propulsion may be produced by drawing fluid inwards across portions of the boundary and by expelling it from others, or by the tangential motion of certain portions of the boundary, as by belts. The propulsion of jet planes and submarines or of minute organisms like ciliates and flagellates can be considered of this nature.

In this context, the set of equations describing the motion of $\{ \mathcal{S}, \mathcal{F} \}$, in a reference frame attached to $\mathcal{S}$, is 
\begin{gather}
-\div \sigma(v,p)+  (v-V)\cdot \nabla v+ \omega\times v =0 \quad \text{in} \ \Omega \label{000}\\
\div v =0 \quad \text{in} \ \Omega \label{001}\\
v=V+v_* \quad \text{on}\ \partial \Omega\label{002}\\
\lim_{|x|\to \infty} v(x) = 0\label{003}\\
m \xi\times \omega + \int_{\partial \Omega} \left[-\sigma(v,p)n +  \left(v_*\cdot n \right) (v_* + V + \omega \times x)
  \right] \ d \gamma =0
\label{903}\\
(I \omega)\times \omega + \int_{\partial \Omega} x\times \left[-\sigma(v,p)n +  \left(v_*\cdot n \right) (v_* + V + \omega \times x) \right] \ d \gamma =0
\label{004}
\end{gather}
where the quantities $v=v(x)$ and $p=p(x)$ represent, respectively, the velocity field and the pressure of the liquid and 
\begin{equation}\label{tt01}
V(x)=\xi + \omega \times x, \quad x\in \mathbb{R}^3,
\end{equation}
represents the velocity of the solid, as seen by an observer attached to $\mathcal{S}$. We assume that the density of the fluid is constant and equal to 1. Moreover, we denote by $\sigma(v,p)$ the Cauchy stress tensor defined by
$$
\sigma(v,p):=2 D(v) - p {\mathbb I}_3,
$$
where the viscosity of the fluid is also assumed equal to 1,  ${\mathbb I}_3$ is the $3\times 3$ identity matrix and $D(v)$ is the symmetric part of the gradient, 
\[
D(v):=\frac{1}{2}\left(\nabla v+(\nabla v)^\top\right).
\]
Due to the incompressibility condition \eqref{001}, we can use  the relation $\div \sigma(v,p)= \Delta v-\nabla p$ in equation \eqref{000}. Moreover, in equation \eqref{000} and in what follows, for sufficiently regular vector fields $u$ and $v$, $u\cdot \nabla v$ is the vector field with components $(u\cdot \nabla v)_i = \sum_{j=1}^3 u_j \frac{\partial v_i}{\partial x_j}$. The set $\mathcal{S}$ representing the rigid body is a compact simply connected set, with non empty interior, so that $\Omega=\mathbb R^3\setminus {\cal S}$ is a three-dimensional exterior domain. In \eqref{903}--\eqref{004} and in what follows, $n$ denotes the outward unit normal to $\partial\Omega$, which is well defined provided $\partial \Omega$ is locally Lipschtizian.  Throughout this paper, nevertheless, $\partial\Omega$ is assumed to be of class $C^3$ (which is in fact needed in Theorem \ref{Tmain}.)
We will assume that the center of gravity of the rigid body is located at the origin, that is $\int_{\mathcal{S}} x \ dx=0$, and define the mass and inertia matrix by
$$
m:=\int_{\mathcal{S}} \ dx, \quad  
I:=\int_{\mathcal{S}} (|x|^2 {\mathbb I}_3 - x\otimes x) \ dx,
$$
where we are assuming that the density of the rigid body is constant and equal to 1. The boundary values $v_*$ represent the thrust velocity, responsible for the motion of $\mathcal S$. The model \eqref{000}--\eqref{004} is inspired by Galdi
\cite{Galdi1997,Galdi1999,GRev}, the equation \eqref{903} having been obtained as the net force exerted by the fluid on the solid and the equation \eqref{004} being the corresponding balance of torques.

While moving through the fluid, ${\mathcal S}$ will experience a drag, i.e., a net force in the direction of flow, due to the pressure and shear stress on  the surface of ${\mathcal S}$, which tends to slow down its motion. In this paper, our aim is to use the Dirichlet boundary data $v_*$ not only to self-propell ${\mathcal S}$, but also to minimize the work needed to overcome the drag exerted by the fluid on $\mathcal{S}$
\begin{equation}
W (v,p):= \int_{\partial\Omega} v \cdot \big(\sigma(v,p) n\big) \ d\gamma = \int_{\partial\Omega} (v_* + V)  \cdot \big(\sigma(v,p) n\big)\  d\gamma
\label{workdrag}
\end{equation}
when it performs a motion with the objective velocity $V$.

In our previous work \cite{HST}, by the present authors, 
we have solved the control problem \eqref{000}--\eqref{004} for $v_*$ in 
finite-dimensional control spaces: $\mathcal{C}_\chi$ for localized controls and $\mathcal{C}_\tau$ for tangential controls (see the definition of those spaces in \eqref{controlspace} and \eqref{controlspacetau}).

In this paper, our aim is to consider controls $v_*$ in infinite-dimensional spaces and to show the existence of an optimal control which minimizes  
the drag functional given by \eqref{workdrag}. Similar problems have been solved in \cite{FGH0,FGH} in the unsteady case without spin of the body by taking for the state equations only the classical Navier-Stokes system. A control problem for the swimming of microscopic organisms was solved in \cite{MTT}. Here, we also want to characterize the minima by means of the adjoint system. The optimality systems can serve as the basis for computing approximations to optimal solutions numerically as in \cite{D}. The 
 previous results of \cite{HST} will be used to construct a corrector of the control $v_*$ which ensures that the self-propelling conditions are satisfied (see Theorem \ref{Tmain}). 

The main difficulties we have to overcome are the usual ones when dealing with exterior domains \cite{G,Galdi1997}, namely the need of knowing the asymptotic behavior of solutions as well as the presence of terms associated with the rotation of the solid. For example, a good knowledge of the rate of decay of the velocity field is crucial to establish the energy equation for a flow \eqref{000}--\eqref{003}, which, in turn, permits to write the cost functional \eqref{workdrag} in a more convenient form, where, in particular, the pressure is not present. 
The definition of the Lagrangian is another delicate issue in the analysis of the control problems, which benefits from the extra regularity at infinity provided by the self-propelling condition \eqref{903}, as justified in detail in \cite{HST}. In particular, we use the fact that $v\in L^2(\Omega)$, which in general does not hold in exterior domains. 
 We develop even $L^2$-estimate of the velocity $v$, which seems to be of independent interest, and apply it to justify the G\^ateaux differentiability of the control-to-state map with values in $W^{2,2}(\Omega)$ (especially in $L^2(\Omega)$), which plays a role to deduce the optimal condition at the final stage.
For technical reasons, as in \cite{FGH}, we need to impose some restrictions on the size of the control $v_*$ and on the size of the objective velocity components $\xi$ and $\omega$.  

The plan of the paper is the following. In Section \ref{sec_not}, we gather some notation and 
 an auxiliary result. The new $L^2$-estimates for the linearized problem associated with \eqref{000}--\eqref{003}, which are consequence of a condition on the net force exerted by the fluid to the rigid body similar to \eqref{903}, are deduced in Section \ref{newL2estimates}. In Section \ref{sec_pb}, we precisely state  our optimal control problems associated with system \eqref{000}--\eqref{004}. Existence of minima to those control problems is shown in Section  \ref{sec_exi} . 
Then in order to characterize minima, we study, in Section \ref{sec_reg}, the regularity of the 
 control-to-state mapping, more specifically, its G\^ateaux differentiability. Finally, in Section \ref{sec_main}, we obtain optimality conditions of order 1 for our problems. 
These conditions require the well-posedness of the adjoint system.

\section{Notation and auxiliary results}\label{sec_not}
Throughout the paper we shall use the same font style to denote scalar, vector and tensor-valued functions and corresponding function spaces. We use the usual notation to denote Lebesgue and Sobolev spaces on a domain ${\mathcal A}$, namely,
$L^q({\cal A})$ and $W^{m,q}({\cal A})$,
with norms $\|. \|_{q,{\mathcal A}}$  and $\|. \|_{m,q,{\mathcal A}},$ respectively. By $W^{m-\frac{1}{q},q}(\partial{\mathcal A})$ we indicate the trace space on the (sufficiently)
smooth boundary $\partial{\mathcal A}$ of ${\cal A}$, for functions from  $W^{m,q}({\mathcal A}),$ equipped with the usual norm $\| . \|_{m-\frac{1}{q},q,\partial{\mathcal A}}.$ The homogeneous Sobolev space of order $(k,q)\in {\mathbb N} \times (1,\infty)$ is defined by
\[
D^{k,q}({\mathcal A})
:=\{u\in L^1_{loc}({\cal A}) \ ;\ D^\alpha u\in L^q({\cal A})\;
\mbox{for any multi-index $\alpha$ with $|\alpha|=k$}\}
\]
with associated seminorm $|u|_{k,q,{\mathcal A}}=\sum_{|\alpha|=k}\|D^\alpha u\|_{q,{\mathcal A}}$. 
The space $D^{1,q}_0({\mathcal A})$ is the closure of $C^\infty_0({\mathcal A})$ in $D^{1,q}({\mathcal A})$.  In particular, the dual space of $D^{1,2}_0({\mathcal A})$ defined for a domain
${\mathcal A}$, $D^{-1,2}({\mathcal A})$ with norm $|\cdot |_{-1,2,{\mathcal A}}$, will be used in this work.

In what follows, we can assume that $\Omega$ is the exterior domain $\mathbb R^3\setminus {\mathcal S}$. By ${\mathcal D}(\Omega)$ we denote the space of $C_0^\infty(\Omega)^3$-functions which are divergence free.  For a vector or second-order tensor field $G$, $\alpha\geq 0$ and a positive function $\varpi$ defined on $\Omega$, we use the notation
\begin{equation}
\lceil G \rceil_{\alpha,\varpi,\Omega}
:=\sup_{x \in \Omega}[\varpi(x)^{\alpha}|G(x)|],
\end{equation}
and denote by $L^\infty_{\alpha,\varpi}(\Omega)^3$ and $L^\infty_{\alpha,\varpi}(\Omega)^{3\times 3}$ the spaces of those $G \in L^\infty(\Omega)^3$ or $G \in L^\infty(\Omega)^{3\times 3}$ such that the norm $\lceil G \rceil_{\alpha,\varpi,\Omega}$ is finite. To alleviate the notation, we will write only $L^\infty_{\alpha,\varpi}(\Omega)$ instead of $L^\infty_{\alpha,\varpi}(\Omega)^3$ and $L^\infty_{\alpha,\varpi}(\Omega)^{3\times 3}$. 

For $R>0$, we denote by $B_R$ the open ball $
B_R:=\{x \in {\mathbb R}^3 ; |x| <R \}$ and by $A_{R_1,R_2}$ ($R_2>R_1$) the spherical-annulus domain $A_{R_1,R_2}:=\{x \in {\mathbb R}^3 ; R_1< |x| <R_2 \}$.
If ${\mathbb R}^3 \setminus \Omega \subset B_\varrho$ for some $\varrho >0$,
we set $\Omega_R  :=  \Omega \cap B_R $ and $\Omega^R := \Omega \setminus \overline{B_R}$ for $R >\varrho$.

Now we collect a number of useful results concerning the generalized Oseen system  \begin{gather}
-\div \sigma(u,q) - V \cdot \nabla u + \omega\times u  = f \quad \text{in} \ \Omega \label{lis1.3}\\
\div u =0 \quad \text{in} \ \Omega \\
u = u_*  \quad \text{on}\ \partial \Omega \\
\lim_{|x|\to \infty} u(x) = 0 \label{lis1.4}
\end{gather}   
where $V$ is given by \eqref{tt01}.
For the case of external force $f=\mbox{div $F$}$ with $F$ satisfying some
anisotropic pointwise estimate, see \cite[Proposition 2.1]{HST}. The following proposition provides a solution with several properties when less assumptions are imposed on $f$.
\begin{proposition}\label{P23}
If $\Omega$ is of class $C^2$, $f \in L^2(\Omega)\cap D^{-1,2}(\Omega)$ and $u_* \in W^{3/2,2}(\partial \Omega)$, 
then there is a unique 
solution $(u,q)$
to \eqref{lis1.3}--\eqref{lis1.4} such that 
$$
|u|_{1,2,\Omega}  + |u|_{2,2,\Omega}+\|u\|_{\infty,\Omega} + \|q\|_{1,2,\Omega}
\leq C(B,\Omega) \big(\|f\|_{2,\Omega} + |f|_{-1,2,\Omega} + \| u_*\|_{3/2,2,\partial\Omega}\big),
$$
where $B>0$ is such that $|\xi|,|\omega|\leq B$.  
\end{proposition}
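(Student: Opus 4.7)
The plan is to reduce the problem to homogeneous boundary data and then use an energy estimate, a commutator bootstrap, and Sobolev embeddings in $\mathbb{R}^3$. First, I would produce a solenoidal extension $U\in W^{2,2}(\Omega)$ of $u_*$, supported in a bounded neighborhood of $\partial\Omega$, with $\|U\|_{2,2,\Omega}\le C\|u_*\|_{3/2,2,\partial\Omega}$ (local extension plus a Bogovski\u{\i} correction to kill the divergence). Setting $w:=u-U$ reduces the system to one for $(w,q)$ with $w|_{\partial\Omega}=0$ and modified force
\[
\tilde f:=f+\Delta U+V\cdot\nabla U-\omega\times U\in L^2(\Omega)\cap D^{-1,2}(\Omega),
\]
with norms controlled by $\|f\|_{2,\Omega}+|f|_{-1,2,\Omega}+\|u_*\|_{3/2,2,\partial\Omega}$; the linearly growing factor in $V\cdot\nabla U$ is harmless because $U$ has compact support.

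Second, I would obtain a weak solution $w\in D^{1,2}_0(\Omega)$ by Galerkin approximation, or by taking a limit over an exhaustion $\Omega_{R_k}$. The key observation is that the three transport/rotation terms vanish in the energy identity:
\[
\int_\Omega (\xi\cdot\nabla w)\cdot w\,dx=0,\qquad \int_\Omega \bigl((\omega\times x)\cdot\nabla w\bigr)\cdot w\,dx=0,\qquad \int_\Omega (\omega\times w)\cdot w\,dx=0,
\]
because $\xi$ is constant, $\div(\omega\times x)=0$, and $\omega\times w\perp w$ pointwise. Testing the reduced equation against $w$ and invoking Korn's inequality yields $|w|_{1,2,\Omega}\le C|\tilde f|_{-1,2,\Omega}$; uniqueness follows from the same computation applied to the difference of two solutions.

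Third, I would upgrade to $W^{2,2}$ by means of the commutator identity
\[
\partial_i\bigl[(\omega\times x)\cdot\nabla w\bigr]=(\omega\times e_i)\cdot\nabla w+(\omega\times x)\cdot\nabla\partial_i w,
\]
which shows that $\partial_i w$ (handled rigorously via difference quotients) satisfies the same Oseen-rotation equation with source $\partial_i\tilde f+(\omega\times e_i)\cdot\nabla w$. Combining interior $W^{2,2}$-regularity for this equation with Cattabriga-type Stokes estimates near $\partial\Omega\in C^2$, and absorbing the extra first-order term using the $|w|_{1,2}$-bound of Step~2, yields
\[
|w|_{2,2,\Omega}+\|q\|_{1,2,\Omega}\le C(B,\Omega)\bigl(\|\tilde f\|_{2,\Omega}+|w|_{1,2,\Omega}\bigr),
\]
with $\nabla q$ recovered algebraically from the equation itself and $q$ normalized to lie in $L^2_{loc}$ with vanishing mean on a fixed ball.

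Finally, Sobolev embeddings in $\mathbb{R}^3$ give $D^{1,2}_0(\Omega)\hookrightarrow L^6(\Omega)$, so $w\in L^6$, and the same embedding applied componentwise to $\nabla w$ gives $\nabla w\in L^6$; hence $w\in W^{1,6}(\Omega)$ and Morrey's embedding $W^{1,6}(\Omega)\hookrightarrow L^\infty(\Omega)$ delivers the $\|u\|_{\infty,\Omega}$ bound after undoing $u=w+U$. The main obstacle is Step~3: because $(\omega\times x)\cdot\nabla w$ has a coefficient growing like $|x|$, one must route every second-derivative estimate through the commutator identity above, trading the $|x|$-growth for a bounded-coefficient first-order perturbation of the same equation that is then absorbed by the already-proven $|w|_{1,2}$-bound.
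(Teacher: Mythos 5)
Your reduction to homogeneous boundary data via a compactly supported solenoidal lift, the antisymmetry-based energy identity, and the commutator bootstrap for the velocity are all sensible and are a genuinely different (more self-contained) route than the paper, which simply cites Galdi's book for the weak solution, uniqueness, local second-derivative estimates and local pressure regularity. Where the proposal breaks down, however, is the pressure at infinity: the bound $\|q\|_{1,2,\Omega}$ in the Proposition is a \emph{global} $W^{1,2}(\Omega)$ bound, and neither the Cattabriga-type estimates (which are local) nor the commutator trick deliver it. Reading $\nabla q$ off the equation for $w$ gives
\[
\nabla q=\Delta w+\xi\cdot\nabla w+(\omega\times x)\cdot\nabla w-\omega\times w+\tilde f,
\]
and the two terms $(\omega\times x)\cdot\nabla w$ and $\omega\times w$ are not in $L^2(\Omega)$ from what has been proved: $\nabla w\in L^2$ does not put $|x||\nabla w|$ in $L^2$, and $w$ is only known to lie in $L^6$, not $L^2$. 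The commutator identity improves the equation for $\partial_i w$, but the same growing coefficient reappears in $\nabla\partial_i q$ and the pressure drops out of the energy estimate anyway, so no global information about $q$ is produced. Saying "$q$ normalized to lie in $L^2_{\rm loc}$" is strictly weaker than the claim.

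This is exactly the part of the proof to which the paper devotes the most effort. Starting from the weaker fact $\nabla q/(1+|x|)\in L^2(\Omega^r)$ (which does follow from the equation, after dividing by $1+|x|$ and using Hardy), the paper cuts off to a whole-space problem, observes that the transported pressure $\widetilde q$ solves $\Delta\widetilde q=\operatorname{div}(\widetilde f+\nabla g+gV)$ with a right-hand side in $L^2(\mathbb{R}^3)\cap D^{-1,2}(\mathbb{R}^3)$ (compact support of the cut-off data is essential here), represents $\widetilde q$ on the Fourier side, rules out nonconstant polynomial corrections using the growth bound, and then uses Plancherel and the normalization $q\in L^6(\Omega)$ to conclude first $\nabla q\in L^2(\Omega)$ and then $q\in L^2(\Omega)$. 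Your proposal would need to incorporate an argument of this type — or some substitute that controls the pressure uniformly at spatial infinity — before the stated estimate on $\|q\|_{1,2,\Omega}$ (and hence the final conclusion) is actually established. The velocity side of your argument, including the $L^\infty$ bound via $w,\nabla w\in L^6$ and Morrey, is in order.
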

\begin{proof}For the sake of the readers, we give 
the main steps of the proof although the results are more or less known. First, existence of a weak solution $u\in D^{1,2}(\Omega)$ of \eqref{lis1.3}--\eqref{lis1.4} is obtained by applying \cite[Theorem VIII.1.2, p. 501]{G}. Note that for this step, it is sufficient that $\Omega$ is a locally Lipschitz domain, 
$f \in D^{-1,2}(\Omega)$
and $u_* \in W^{1/2,2}(\partial \Omega)$. The pressure $q\in L^2_{loc}(\overline{\Omega})$ is recovered by \cite[Lemma VIII.1.1, p. 500]{G}. This weak solution satisfies the following estimates:
\begin{equation}
\| u \|_{2,\Omega_R} + \| \nabla u \|_{2,\Omega} 
+\|u\|_{6,\Omega}
\leq C(R,\Omega) \big[|f|_{-1,2,\Omega} + (1+|\xi| + |\omega|)\| u_*\|_{1/2,2,\partial\Omega}\big],
\label{ubound}
\end{equation}
\begin{equation}
 \|q-\overline{q}_R\|_{2,\Omega_R}
\leq C(R,\Omega) \big[|f|_{-1,2,\Omega} + (1+|\xi| + |\omega|)\| \nabla u \|_{2,\Omega}\big],
\label{qbound}
\end{equation}
for $R$ sufficiently large, where $\overline{q}_R:=\frac{1}{|\Omega_R|}\int_{\Omega_R}q$.
Uniqueness of the weak solution (up to constants for the pressure, however, we will actually single out it later) is given by \cite[Lemma VIII.2.3, p. 512]{G}  (see also Theorem VIII.2.1).

Then, we apply \cite[Lemma VIII.2.1, p. 505]{G} to obtain 
\begin{equation}
\|D^2u\|_{2,\Omega^r}\leq C(r,\rho,B)\big(\|f\|_{2,\Omega^\rho}+\|\nabla u\|_{2,\Omega}\big)
\label{u-second}
\end{equation}
with $r>\rho$ provided $f \in L^{2}(\Omega^\rho)$ for some $\rho>0$.
Moreover, \cite[Lemma VIII.6.1, p. 554]{G} yields estimates for the second derivatives of $u$ and first derivatives of $q$ in a bounded domain $\Omega_r$, provided $f \in L^{2}(\Omega_R)$ for some $R>r$,
$u_* \in W^{3/2,2}(\partial \Omega)$
and $\Omega$ is of class $C^2$:
\begin{equation}
\| u \|_{2,2,\Omega_r} + 
\|\nabla q\|_{2,\Omega_r}
\leq C(r,R,\Omega,B) \big(\|f\|_{2,\Omega_R} + \| u_*\|_{3/2,2,\partial\Omega} + \| u \|_{2,\Omega_R} 
+\| q -\overline{q}_R \|_{2,\Omega_R}\big).
\label{nuq2Omega}
\end{equation}

From the equation \eqref{lis1.3} together with \eqref{ubound} and \eqref{u-second} it follows that
$$
\frac{|\nabla q(x)|}{1 + |x|} \leq  |\Delta u(x)| + |f(x)| + C(\xi,\omega)\left(|\nabla u(x)| + \frac{|u(x)|}{1+|x|}\right) \text{ a.e. } \Omega,
$$
and since the right-hand side of this inequality is square-summable near infinity (Hardy inequality is used for the last term), we have
\begin{equation}
\frac{\nabla q}{1 + |x|} \in  L^2(\Omega^r)
\label{growp}
\end{equation}
for every sufficiently large $r>0$.

Given $r>0$ large enough such that $\mathbb R^3\setminus\Omega\subset B_{r/2}$, we fix a cut-off function $\psi\in C_0^\infty(B_r; [0,1])$ satisfying $\psi(x)=1$ for $x\in B_{r/2}$. Given a pressure $q$, let us consider the pair $(\widetilde u, \widetilde q)=((1-\psi)u,(1-\psi)(q-\overline{q}_r))$, that obeys 
\[
-\div\sigma(\widetilde u,\widetilde q)-V\cdot\nabla\widetilde u+\omega\times\widetilde u=\widetilde f, \quad
\div\widetilde u=g \quad\mbox{in $\mathbb R^3$}
\]
with
\[
\widetilde f:=(1-\psi)f+2\nabla\psi\cdot\nabla u+(\Delta\psi+V\cdot\nabla\psi)u-(\nabla\psi)(q-\overline{q}_r), \quad
g:=-u\cdot\nabla\psi.
\]
Then
\begin{equation}
\Delta\widetilde q=\div (\widetilde f+\nabla g + gV) \quad\mbox{in $\mathbb R^3$}
\label{q-whole}
\end{equation}
and from \eqref{growp} we deduce $\nabla\widetilde q\in {\mathcal S}^\prime(\mathbb R^3)$ and thereby, $\widetilde q\in {\mathcal S}^\prime(\mathbb R^3)$. Every solution to \eqref{q-whole} within ${\mathcal S}^\prime(\mathbb R^3)$ is represented as
\begin{equation}
\widetilde{q} = Q + P, \qquad Q = - {\mathcal F}^{-1} \left(   \frac{i \zeta \cdot \hat h }{|\zeta|^2}   \right)
\label{qQ}
\end{equation}
where $P$ is a harmonic polynomial, ${\cal F}^{-1}$ denotes the Fourier inverse transform and $$h:= \widetilde f+\nabla g + g V
\in L^2(\mathbb R^3).$$ Since $\nabla Q \in L^2(\mathbb R^3)$, which follows from $h \in L^2(\mathbb R^3)$, one needs $(\nabla P)/(1+|x|) \in  L^2(\Omega^r)$ to accomplish \eqref{growp} and this, in turn, is possible only if $P$ is a constant. We thus obtain 
\begin{equation}
 \nabla   \widetilde q =   \nabla Q
 \label{gradq}
 \end{equation}
together with the estimate 
\begin{equation}
\|\nabla q\|_{2,\Omega^r}\leq \|\nabla\widetilde q\|_{2,\mathbb R^3}
\leq \|\widetilde f+\nabla g+gV\|_{2,\mathbb R^3}
\leq C(r,B)\big(\|f\|_{2,\Omega}+\|u\|_{1,2,\Omega_r}+\|q-\overline{q}_r\|_{2,\Omega_r}\big).
\label{q-far1}
\end{equation}
To deduce this estimate, we have used  Plancherel Theorem. Combining \eqref{q-far1} with \eqref{nuq2Omega} leads to $\nabla q\in L^2(\Omega)$ and, therefore, there is a constant $a\in\mathbb R$ such that
$\|q-a\|_{6,\Omega}\leq C\|\nabla q\|_{2,\Omega}$. We now single out the pressure $q$ with $a=0$, that is, $q\in L^6(\Omega)$. Then we have
\begin{equation}
\|q\|_{2,\Omega_r}\leq C(r)\|q\|_{6,\Omega_r}\leq C(r)\|\nabla q\|_{2,\Omega}.
\label{q-loc}
\end{equation}
With this pressure at hand, let us go back to the cut-off procedure above and \eqref{q-whole}.

Considering the pairing
$\langle h, \varphi\rangle_{\mathbb R^3}$ with $\varphi\in C_0^\infty(\mathbb R^3)$ and noting $\|\varphi\|_{2,\Omega_r}\leq C\|\nabla\varphi\|_{2,\mathbb R^3}$ by the same reasoning as above, we find that $ h \in D^{-1,2}(\mathbb R^3)$ by duality. By \cite[Lemma 2.2]{KS1991} (see also \cite[Theorem II.8.2, p. 112]{G}), there is a tensor field $H \in L^2({\mathbb R}^3)$ such that 
$\langle h, \varphi\rangle_{\mathbb R^3}$ is given by 
$$\langle h, \varphi\rangle_{\mathbb R^3} = \int_{{\mathbb R}^3}H : \nabla \varphi dx = - \langle \div H , \varphi \rangle_{(C_0^\infty({\mathbb R}^3))' \times C_0^\infty({\mathbb R}^3)}, \, \forall \varphi \in C_0^\infty({\mathbb R}^3),
$$ 
and the following estimate holds:
\begin{equation}
\| H \|_{2, \mathbb R^3} = |h|_{-1,2,{\mathbb R}^3} =|\widetilde f+\nabla g+gV|_{-1,2,{\mathbb R}^3}
 \leq C(r,B)\big(|f|_{-1,2,\Omega}+\|u\|_{2,\Omega_r}+\|q-\overline{q}_r\|_{2,\Omega_r}\big).
\label{H-est}
\end{equation} 
From \eqref{qQ} and Plancherel Theorem, we get
\begin{equation}
Q = - {\mathcal F}^{-1} \left( \frac{(\zeta\otimes\zeta): \widehat H}{|\zeta|^2} \right), \qquad \| Q \|_{2, \mathbb R^3} \leq \| H \|_{2, \mathbb R^3}.
\label{QH}
\end{equation}
By \eqref{gradq} there is a constant $b\in\mathbb R$ such that 
$\widetilde q+b =Q \in L^2(\mathbb R^3)$ 
and, therefore, $q-\overline{q}_r+b \in L^2(\Omega^r)$, where $r>0$ is fixed 
 at the outset of the cut-off procedure.
However, it follows from $q\in L^6(\Omega)$ that $b=\overline{q}_r$.
As a consequence, we obtain $q\in L^2(\Omega^r)$ and, furthermore, by \eqref{H-est}--\eqref{QH}
\begin{equation}
\|q\|_{2,\Omega^r}\leq\|\widetilde q+\overline{q}_r\|_{2,\mathbb R^3}
=\|Q\|_{2,\mathbb R^3}
\leq C(r,B)\big(|f|_{-1,2,\Omega}+\|u\|_{2,\Omega_r}+\|q-\overline{q}_r\|_{2,\Omega_r}\big),
\label{q-far2}
\end{equation}
which together with \eqref{q-loc} implies that $q\in L^2(\Omega)$. We collect \eqref{ubound}--\eqref{nuq2Omega}, \eqref{q-far1}, \eqref{q-loc} and \eqref{q-far2} to conclude the desired estimates except the $L^\infty$-norm.

Finally, using \cite[Lemma 4.1]{Crispo}  and Sobolev inequalities,
we conclude that $u,\nabla u \in L^6(\Omega)$, $u\in L^\infty(\Omega)$  and 
$$
\| u \|_{\infty,\Omega} \leq C(\Omega,B) \left(\|f\|_{2,\Omega} + |f|_{-1,2,\Omega} + \| u_*\|_{3/2,2,\partial\Omega}\right).
$$
\end{proof}

\section{$L^2$-estimate of the solution to a linearized problem}\label{newL2estimates}

Given a rigid motion $V$ as in \eqref{tt01}, in this section, we consider better asymptotic behavior at infinity of the solution 
to the generalized Oseen system
\begin{equation}
- \div\sigma(u,q)
-V\cdot\nabla u+\omega\times u=\mbox{div $F$}, \quad 
\div u=0
\label{linear-ext}
\end{equation}
in an exterior domain $\Omega$ without explicitly specifying any boundary condition at  
$\partial\Omega$ when 
\begin{equation}
\begin{array}{rl}
N=0, &\qquad \mbox{if $\omega=0$},\\
\omega\cdot N=0, &\qquad \mbox{if $\omega\in\mathbb R^3\setminus\{0\}$}, 
\end{array}
\label{zero-force}
\end{equation}
where $N$ stands for the net force exerted by the fluid to the rigid body, that is,
\begin{equation}
N=N_{u,q}=\int_{\partial\Omega}
[\sigma(u,q)+u\otimes V-(\omega\times x)\otimes u+F]\,n\,d\gamma,
\label{net-force}
\end{equation}
which is well-defined as long as $(u,q)$ and $F$ are of class
\eqref{ext-force}--\eqref{class} below
(yielding $$[\sigma(u,q)+u\otimes V-(\omega\times x)\otimes u+F]\,n\in W^{-1/2,2}(\partial\Omega)$$as the normal trace).
 Note that \eqref{903} is equivalent to $N_{v,p}=0$ with $F=-v\otimes v$.
Under the condition \eqref{zero-force}, 
we know from asymptotic structure of the flow at infinity that 
$u\in L^2(\Omega)$, see \cite{FaHi} and \cite{Ky} especially for the case
$\omega\in\mathbb R^3\setminus\{0\}$,
however, to our knowledge useful estimates 
are not available so far in the literature.
For later use, 
we are aiming at deduction of the following $L^2$-estimate.
\begin{proposition}
\label{L2estimate}
Suppose
\begin{equation}
[x \mapsto (1+|x|)F(x)] \in L^2(\Omega).
\label{ext-force}
\end{equation}
Let $(u,q)$ be a solution to \eqref{linear-ext} of class
\begin{equation}
\nabla u, q\in L^2(\Omega), \qquad
u\in L^6(\Omega).
\label{class}
\end{equation}
Let us set
\[
\Phi=\int_{\partial\Omega}n\cdot u\,d\gamma.
\]

\begin{enumerate}
\item
Let $\omega=0$.
If $N=0$, then we have $u\in L^2(\Omega)$ subject to
\begin{equation}
\|u\|_{2,\Omega}
\leq C
\big[
(1+|\xi|)
\left(\|\nabla u\|_{2,\Omega}+|\Phi|\right)+\|q\|_{2,\Omega}+\|F\|_{2,\Omega}
\big]
+C'\||x|F\|_{2,\Omega}
\label{L2-est2}
\end{equation}
with some constants $C,\,C'>0$
which are independent of $u,q,F,\xi$ and $\Phi$.
\item
Let
$\omega\in\mathbb R^3\setminus\{0\}$.
If $\omega\cdot N=0$, then we have $u\in L^2(\Omega)$ subject to
\begin{equation}
\|u\|_{2,\Omega}\leq 
C K(u,q,F,\xi,\omega,\Phi)
+C'\||x|F\|_{2,\Omega}
\label{L2-est}
\end{equation}
with some constants $C,\,C'>0$
which are independent of $u,q,F,\xi,\omega$,
where
\begin{equation}
\begin{split}
&K(u,q,F,\xi,\omega,\Phi) =\left(
1+|\omega|^{-1/4}+\frac{|\omega\cdot\xi|^{1/2}}{|\omega|}
+\frac{|\omega\times\xi|}{|\omega|^2}
\right) \\
&\qquad \qquad \qquad \qquad \qquad
\cdot\big[
(1+|\xi|+|\omega|)
\big(\|\nabla u\|_{2,\Omega}+|\Phi|\big)+\|q\|_{2,\Omega}+\|F\|_{2,\Omega}
\big].
\end{split}
\label{quantity}
\end{equation}
\end{enumerate}
\label{square-est}
\end{proposition}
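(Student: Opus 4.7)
The plan is to exploit the enhanced decay of $u$ at infinity implied by the vanishing of (the $\omega$-projection of) the net force $N$, and then convert that pointwise gain into an explicit $L^2$-bound. Throughout, I would treat the two cases in parallel, with the rotating fundamental solution of \cite{FaHi,Ky} replacing the classical Oseen tensor.

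\emph{Step 1: reduction to a whole-space problem.} Fix $r>0$ with $\mathbb R^3\setminus\Omega\subset B_{r/2}$, pick $\psi\in C_0^\infty(B_r;[0,1])$ with $\psi\equiv 1$ on $B_{r/2}$, and set $\tilde u=(1-\psi)u$, $\tilde q=(1-\psi)(q-\overline q_r)$, extended by zero. Then $(\tilde u,\tilde q)$ satisfies the same operator on $\mathbb R^3$ with right-hand side $\mathrm{div}\,\tilde F+\tilde g$, where $\tilde F=(1-\psi)F$ plus terms supported in the annulus $A_{r/2,r}$ involving $\psi,\nabla\psi,u,q$, and $\tilde g$ accounts for $\mathrm{div}\,\tilde u=-u\cdot\nabla\psi$. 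Because the equation is written in divergence form $\mathrm{div}\,T=0$ with $T=\sigma(u,q)+u\otimes V-(\omega\times x)\otimes u+F$ (using $\mathrm{div}\,u=0$ and $\mathrm{div}(\omega\times x)=0$), the corresponding net force through any $\partial B_R$ with $R>r$ is still $N$, modulo a boundary contribution from $\partial\Omega$ that is exactly accounted for by $\Phi$.

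\emph{Step 2: asymptotic representation and cancellation.} Represent $\tilde u$ via the fundamental tensor $\Gamma_{\xi,\omega}$ of the rotating Oseen system. For $\omega=0$, $\Gamma_{\xi,0}=E_\xi$ is the classical Oseen tensor, which behaves like $|x|^{-1}(1+s_\xi(x))^{-1}$ at infinity and is not in $L^2$. For $\omega\neq 0$, the time-averaged / Kyed-type construction yields $\Gamma_{\xi,\omega}$ whose only non-$L^2$ component at infinity is aligned with $\omega$; see \cite{FaHi,Ky}. Splitting
\[
\tilde u(x)=\Gamma_{\xi,\omega}(x)\!\int_{\mathbb R^3}\! d\tilde f+\int_{\mathbb R^3}\big(\Gamma_{\xi,\omega}(x-y)-\Gamma_{\xi,\omega}(x)\big)\,d\tilde f(y),
\]
the first term equals $\Gamma_{\xi,\omega}(x)\,N$ (by the divergence-form identity of Step 1) and the second is $O(|x|^{-2})$ near infinity, hence in $L^2(B_r^c)$. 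Under the hypothesis $N=0$ or $\omega\cdot N=0$, the leading obstruction vanishes, leaving only the $L^2$ remainder.

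\emph{Step 3: quantitative bound.} On $\Omega_r$, the bound $\|u\|_{2,\Omega_r}\leq C(r)\|u\|_{6,\Omega}\leq C(r)\|\nabla u\|_{2,\Omega}$ is automatic from Sobolev and $u\in L^6(\Omega)$. On $\Omega^r$, integrate by parts against $\Gamma_{\xi,\omega}$: since $|\nabla\Gamma_{\xi,\omega}(x)|$ is borderline $L^2$ at infinity, the convolution $\nabla\Gamma_{\xi,\omega}*F$ fits in $L^2$ precisely when $|x|F\in L^2(\Omega)$, which is hypothesis \eqref{ext-force} and yields the term $C'\||x|F\|_{2,\Omega}$. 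The contributions from $\tilde f$ supported in $A_{r/2,r}$ and from $\tilde g$ are controlled by $\|\nabla u\|_{2,\Omega}+\|q\|_{2,\Omega}+\|F\|_{2,\Omega}+|\Phi|$ (the flux appears through $\tilde g=-u\cdot\nabla\psi$).

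The main obstacle is the explicit $(\xi,\omega)$-dependence in \eqref{quantity}. The factor $(1+|\xi|+|\omega|)$ is the natural Oseen scaling. The more delicate weights $|\omega|^{-1/4}$, $|\omega\cdot\xi|^{1/2}/|\omega|$ and $|\omega\times\xi|/|\omega|^2$ reflect anisotropic scaling of the rotating Oseen kernel: the first from interpolating between the parabolic and rotational scalings of $\Gamma_{\xi,\omega}$, the second from the resonance along the rotation axis between translation and period, and the third from the drift induced by the component of $\xi$ perpendicular to $\omega$. Careful bookkeeping through the time-averaged Oseen kernel representation of \cite{FaHi,Ky} is the principal technical work; the singular behaviour as $|\omega|\to 0$ is not a real issue, because the non-rotating case is handled separately by \eqref{L2-est2}.
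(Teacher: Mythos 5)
Your overall strategy is a legitimate and genuinely different route from the paper's. The paper works entirely on the Fourier side: after a cut-off (using the Bogovskii operator to keep the modified velocity solenoidal) and the Moggi--Chasles change of variables that aligns the rotation axis with $e_1$, the whole-space solution is expressed via the explicit oscillatory formula \eqref{formula}, the low-frequency part is Taylor-expanded at $\zeta=0$, and the problematic term $I_1$ involving $\widehat g(0)$ is killed by the null condition $e_1\cdot\widehat g(0)=0$ together with an elementary computation of $\int_{|\zeta|<1}|J_\pm(\zeta)|^2\,d\zeta$ that yields the $|\omega|^{-1/4}$ and $|{\cal R}|^{1/2}|\omega|^{-1/2}$ factors in \eqref{dependence} directly. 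You instead propose a physical-space argument subtracting the leading singularity $\Gamma_{\xi,\omega}(\cdot)N$ of the fundamental tensor, invoking the anisotropic asymptotics of \cite{FaHi,Ky}. That is conceptually sound and in fact closer in spirit to how the qualitative statement $u\in L^2(\Omega)$ was originally obtained in those references. However, as written your proposal has real gaps.

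First, the representation $\tilde u(x)=\Gamma_{\xi,\omega}(x)\int d\tilde f+\int\big(\Gamma_{\xi,\omega}(x-y)-\Gamma_{\xi,\omega}(x)\big)d\tilde f(y)$ with a well-defined ``point mass'' $\int d\tilde f=N$ and an $O(|x|^{-2})$ remainder is only valid for the compactly supported part of $\tilde f$. The part $\mathrm{div}[(1-\psi)F]$ is not in $L^1(\mathbb R^3)$ (only $(1+|x|)F\in L^2$ is assumed), so this split does not apply to it; one must treat the divergence term by the separate convolution estimate with $\nabla\Gamma_{\xi,\omega}$ you mention in Step 3, and then it is precisely this part that produces $\||x|F\|_{2,\Omega}$. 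In the paper this distinction is built in from the start by splitting $f=g+\mathrm{div}\,G$ with different hypotheses on $g$ and $G$ (see \eqref{ext-force-whole}); your Step 2 collapses them. Second, and more importantly, the entire quantitative content of \eqref{quantity} --- the explicit factors $|\omega|^{-1/4}$, $|\omega\cdot\xi|^{1/2}/|\omega|$, $|\omega\times\xi|/|\omega|^2$, $(1+|\xi|+|\omega|)$ --- is what the proposition actually asserts, and you explicitly defer it (``careful bookkeeping ... is the principal technical work''), replacing it with heuristic scaling commentary. In the paper these constants drop out cleanly from \eqref{dependence} and from tracking the Moggi--Chasles shift $\omega\times\xi/|\omega|^2$ through the cut-off region; reproducing them by pointwise estimates on the rotating Oseen tensor is substantially harder and is not carried out here. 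Third, you leave $\tilde u=(1-\psi)u$ non-solenoidal and absorb $\mathrm{div}\,\tilde u=-u\cdot\nabla\psi$ into an extra source $\tilde g$; this can be made to work, but one then needs a matching null/moment condition for $\tilde g$ and a careful treatment of the resulting pressure contribution. The paper sidesteps this by inserting $\mathbb B[u\cdot\nabla\psi]$ via the Bogovskii operator so that $\tilde u$ stays divergence-free, and handles the nonzero flux $\Phi$ separately via the flux carrier ${\cal W}(x)=\Phi\nabla\frac{1}{4\pi|x-x_0|}$ before passing to the zero-flux case. Your proposal does not make the flux-lifting explicit either. In short: right idea, defensible alternative route, but the hard parts of the statement are exactly the parts that remain unproved.
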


For the latter case $\omega\in\mathbb R^3\setminus\{0\}$,
it is also possible to deduce a bit different
estimate from \eqref{L2-est}:
\begin{equation}
\begin{split}
&\|u\|_{2,\Omega}  \leq C\left(
|\omega|^{-1/4}+\frac{|\omega\cdot\xi|^{1/2}}{|\omega|}
\right)
\big(|N|+|\omega||\Phi|\big)
+C'\||x|F\|_{2,\Omega}  \\
& \qquad \qquad \qquad +C\left(
1+\frac{|\omega\times\xi|}{|\omega|^2}
\right)
\big[
(1+|\xi|+|\omega|)
\big(\|\nabla u\|_{2,\Omega}+|\Phi|\big)+\|q\|_{2,\Omega}+\|F\|_{2,\Omega}
\big].
\end{split}
\label{L2-est3}
\end{equation}
As usual, by a cut-off procedure, the problem 
in exterior domains will be reduced to the one in the whole space. When 
$\omega\in\mathbb R^3\setminus\{0\}$,
we then use the Moggi-Chasles transform, see \cite{G} and \cite{GS1}, to modify the resulting problem in the following way
\begin{equation}
\begin{split}
&y=M\left(x-\frac{\omega\times\xi}{|\omega|^2}\right), \\
&{\mathfrak s}(y)= s \left(M^\top y+\frac{\omega\times\xi}{|\omega|^2}\right) \mbox{ for a scalar field},\\
&{\mathfrak v}(y)=M v\left(M^\top y+\frac{\omega\times\xi}{|\omega|^2}\right) \mbox{ for a vector field} ,\\
&{\mathfrak T}(y)=\Big(M T M^\top\Big)\left(M^\top y+\frac{\omega\times\xi}{|\omega|^2}\right) \mbox{ for a tensor field},
\end{split}
\label{changevariables}
\end{equation}
where $M\in \mathbb R^{3\times 3}$ being an orthogonal matrix
that fulfills $M\frac{\omega}{|\omega|}=e_1$, to obtain the generalized Oseen
system in which the direction of the translation is parallel to the
axis of rotation that becomes the $e_1$-direction.
Thus, let us consider the system
\begin{equation}
-\Delta v+\nabla p
-{\cal R}\partial_1v-|\omega|\left[(e_1\times y)\cdot\nabla v-e_1\times v\right]
=f, \quad 
\mbox{div }v=0 \quad \mbox{ in }\mathbb R^3_y
\label{linear-whole}
\end{equation}
within the class of tempered distributions,
where
\begin{equation}
{\cal R}=\frac{\omega\cdot\xi}{|\omega|}.
\label{parame}
\end{equation}
For the external force $f$ of a suitable class, we know that:
\begin{enumerate}
\item when $\omega=0$, to the classical Oseen system
\[
-\Delta v+\nabla p-\xi\cdot\nabla v=f, \quad 
\mbox{div $v$}=0  \mbox{ in } \mathbb R^3,
\]
we see that
\begin{equation}
\widehat v(\zeta)=\frac{1}{|\zeta|^2-i\xi\cdot\zeta}
\left(
\mathbb I_3-\frac{\zeta\otimes\zeta}{|\zeta|^2}
\right)
\widehat f(\zeta)
\label{formula2}
\end{equation}
 is a solution on the Fourier side.
\item when 
$\omega\in\mathbb R^3\setminus\{0\}$ 
\begin{equation}
\widehat v(\zeta)
=\int_0^\infty e^{-(|\zeta|^2-i{\cal R}\zeta_1)t}
O_\omega(t)^\top
\left(
\mathbb I_3-\frac{(O_\omega(t)\zeta)\otimes (O_\omega(t)\zeta)}{|\zeta|^2}
\right)
\widehat f(O_\omega(t)\zeta)\,dt
\label{formula}
\end{equation}
is a solution for \eqref{linear-whole} on the Fourier side, where
\[
O_\omega(t)=O(|\omega|t), \qquad
O(t)=
\left[
\begin{array}{ccc}
1 & 0 & 0 \\
0 & \cos t & -\sin t \\
0 & \sin t & \cos t
\end{array}
\right].
\]
\end{enumerate}
Relation \eqref{formula} is classical but we recall here the idea to obtain it: first we notice that the Fourier transform of $(v,p)$ satisfies
$$
\begin{array}{l}
(|\zeta|^2 - i{\cal R}\zeta_1 )\widehat v(\zeta) - |\omega|
\left[(e_1\times\zeta)\cdot\nabla_\zeta \widehat v(\zeta) -e_1\times \widehat v(\zeta)\right] - i \zeta  \widehat p(\zeta)=
\widehat f(\zeta), \medskip \\
 i \zeta \cdot \widehat v(\zeta) = 0, \quad \zeta \in {\mathbb R}^3.
\end{array}
$$
Eliminating the pressure, we find 
\[
(|\zeta|^2 - i{\cal R}\zeta_1 )\widehat v(\zeta) - |\omega|
\left[(e_1\times\zeta)\cdot\nabla_\zeta \widehat v(\zeta) -e_1\times \widehat v(\zeta)\right]=\left(
\mathbb I_3-\frac{\zeta \otimes \zeta}{|\zeta|^2}
\right)
\widehat f(\zeta), \quad \zeta \in {\mathbb R}^3.
\]
Then we define
\begin{equation}
\mathcal V(t,\zeta)= O_\omega(t) \widehat v(O_\omega(t)^\top\zeta) = O_\omega(t) \widehat v(O_\omega(-t)\zeta) 
\label{vtoV}
\end{equation}
and some standard computation yields that $\mathcal V(t,\zeta)$ is time-periodic and satisfies
$$
 \frac{\partial}{\partial t} \mathcal V(t,\zeta)  +  (|\zeta|^2 - i{\cal R}\zeta_1 ) \mathcal V(t,\zeta) 
=O_\omega(t)\left(
\mathbb I_3-\frac{(O_\omega(t)^\top\zeta) \otimes (O_\omega(t)^\top\zeta)}{|\zeta|^2}
\right)
\widehat f(O_\omega(t)^\top\zeta).
$$
By Duhamel's principle, we deduce
$$
\displaystyle \mathcal V(t,\zeta)= \displaystyle \int_{-\infty}^t e^{(s-t)(|\zeta|^2-i{\cal R}\zeta_1)}O_\omega(s)
\left(
\mathbb I_3-\frac{(O_\omega(s)^\top\zeta)\otimes (O_\omega(s)^\top\zeta)}{|\zeta|^2}
\right)
\widehat f(O_\omega(s)^\top\zeta)\,ds
$$
and using \eqref{vtoV}, we recover \eqref{formula}.

Let $(v_0,p_0)\in {\cal S}^\prime(\mathbb R^3)$ be a solution to 
\eqref{linear-whole} with $f=0$, then we see that
$\mbox{supp $\widehat v_0$}\subset\{0\}$.
In fact, since
\[
|\zeta|^2\widehat v_0+i\zeta\widehat p_0
-i{\cal R}\zeta_1\widehat v_0-|\omega|
\left[(e_1\times\zeta)\cdot\nabla_\zeta\widehat v_0-e_1\times\widehat v_0\right]=0,
\qquad i\zeta\cdot\widehat v_0=0,
\]
we have
$|\zeta|^2\widehat p_0=0$, which implies that
$\mbox{supp $\widehat p_0$}\subset\{0\}$ and that
\[
|\zeta|^2
\left[
|\zeta|^2\widehat v_0-i{\cal R}\zeta_1\widehat v_0-|\omega|
\left[(e_1\times\zeta)\cdot\nabla_\zeta\widehat v_0-e_1\times\widehat v_0\right]
\right]=0.
\]
Given arbitrary vector field 
$\varphi\in C_0^\infty(\mathbb R^3_\zeta\setminus\{0\})$, we set
\[
\tau(\zeta)=\int_0^\infty
e^{-(|\zeta|^2+i{\cal R}\zeta_1)t}
O_\omega(t)\varphi\left(O_\omega(t)^\top\zeta\right)\,dt
\in C_0^\infty(\mathbb R^3_\zeta\setminus\{0\}),
\]
which solves the adjoint system 
\[
|\zeta|^2\tau+i{\cal R}\zeta_1\tau+|\omega|
\left[(e_1\times\zeta)\cdot\nabla_\zeta\tau-e_1\times\tau\right]=\varphi.
\]
We thus obtain
\begin{equation*}
\begin{split}
\langle\widehat v_0, \varphi\rangle
&=\langle\widehat v_0,
|\zeta|^2\tau+i{\cal R}\zeta_1\tau+|\omega|
\left[(e_1\times\zeta)\cdot\nabla_\zeta\tau-e_1\times\tau\right]
\rangle  \\
&=\Big\langle
|\zeta|^2
\left\{
|\zeta|^2\widehat v_0-i{\cal R}\zeta_1\widehat v_0-|\omega|
\left[(e_1\times\zeta)\cdot\nabla_\zeta\widehat v_0-e_1\times\widehat v_0\right]
\right\},
\frac{\tau}{|\zeta|^2}
\Big\rangle=0,
\end{split}
\end{equation*}
yielding $\mbox{supp $\widehat v_0$}\subset\{0\}$.
Therefore, $v={\cal F}^{-1}\widehat v$ with \eqref{formula}
is the only solution to \eqref{linear-whole}
up to (specific) polynomials within ${\cal S}^\prime(\mathbb R^3)$.
It is actually the only solution when the polynomials are excluded
on account of the asymptotic behavior at infinity.
The same thing for the case $\omega=0$ is shown even more straightforward.
Thus the following $L^2$-estimate for 
\eqref{formula2}--\eqref{formula} plays an
important role.
\begin{lemma}
Suppose that $f$ is of the form $f=g+\mbox{\rm div } G$ with
\begin{equation}
[y \mapsto (1+|y|)G(y)] \in L^2(\mathbb{R}^3), \qquad
g\in L^1(\mathbb R^3), \qquad
[y \mapsto |y|g(y)] \in L^s(\mathbb R^3)
\label{ext-force-whole}
\end{equation}
for some $s\in [1,6/5)$.
\begin{enumerate}
\item
Let $\omega=0$, and
let $\widehat v$ be as in \eqref{formula2}.
If
\begin{equation}
\int_{\mathbb R^3}g(y)\,dy=0,
\label{null}
\end{equation}
then we have $v={\cal F}^{-1}\widehat v\in L^2(\mathbb R^3)$ subject to
\begin{equation}
\|v\|_{2,\mathbb R^3}
\leq C\left(\|g\|_{1,\mathbb R^3}+\|G\|_{2,\mathbb R^3}\right)
+C'\big\||y|G\big\|_{2,\mathbb R^3}+C''\big\||y|g\big\|_{s,\mathbb R^3}
\label{L2-est2-whole}
\end{equation}
with some constants $C,\,C',\,C''>0$ which are independent of
$g,G$ and $\xi$.

\item
Let $\omega\in\mathbb R^3\setminus\{0\}$, and
let $\widehat v$ be as in \eqref{formula}.
If
\begin{equation}
 e_1\cdot\int_{\mathbb R^3}g(y)\,dy=0,
\label{part-null}
\end{equation}
then we have $v={\cal F}^{-1}\widehat v\in L^2(\mathbb R^3)$ subject to
\begin{equation}
\begin{split}
&\|v\|_{2,\mathbb R^3} \leq C\left[\left(|\omega|^{-1/4}+|{\cal R}|^{1/2}|\omega|^{-1/2}\right)\left|\int_{\mathbb R^3}g(y)\,dy\right|+\|g\|_{1,\mathbb R^3}+\|G\|_{2,\mathbb R^3}\right]  \\
&\qquad \qquad \qquad 
+C'\big\||y|G\big\|_{2,\mathbb R^3}+C''\big\||y|g\big\|_{s,\mathbb R^3}
\end{split}
\label{L2-est-whole}
\end{equation}
with some constants $C,\,C',C''>0$ which are independent of
$g,G,\omega$ and ${\cal R}$.

\end{enumerate}
\label{square-est-whole}
\end{lemma}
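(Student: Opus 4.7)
My plan is to work entirely on the Fourier side, using Plancherel's identity $\|v\|_{2,\mathbb{R}^3}=\|\widehat v\|_{2,\mathbb{R}^3}$ together with the explicit representations \eqref{formula2} and \eqref{formula}. Writing $\widehat f = \widehat g + i\zeta\cdot\widehat G$, I would decompose $\widehat v = \widehat v_g + \widehat v_G$ by linearity and estimate the two pieces separately, further splitting each into a low-frequency region $\{|\zeta|\le 1\}$, where the Oseen multiplier is singular and all the real work lies, and a high-frequency region $\{|\zeta|>1\}$, where the multiplier is bounded and Plancherel immediately yields $\|G\|_{2,\mathbb{R}^3}$ or $\|g\|_{1,\mathbb{R}^3}$ contributions.

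For the divergence-form term $\widehat v_G$ in the non-rotating case $\omega=0$, the modulus of the symbol acting on $i\zeta\cdot\widehat G$ is dominated by $|\zeta|/(|\zeta|^4+(\xi\cdot\zeta)^2)^{1/2}$, hence $|\widehat v_G(\zeta)|^2\le |\widehat G(\zeta)|^2/|\zeta|^2$. Since $(1+|y|)G\in L^2(\mathbb{R}^3)$ translates into $\widehat G\in H^1(\mathbb{R}^3)$, the classical Hardy inequality $\int_{\mathbb{R}^3}|\widehat G|^2/|\zeta|^2\,d\zeta \le C\|\nabla\widehat G\|_{2,\mathbb{R}^3}^2 = C\||y|G\|_{2,\mathbb{R}^3}^2$ supplies the $\||y|G\|_{2,\mathbb{R}^3}$ contribution on low frequencies without requiring any vanishing condition on $G$, while on high frequencies the bounded multiplier trivially gives the $\|G\|_{2,\mathbb{R}^3}$ contribution.

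For the zeroth-order term $\widehat v_g$ the symbol is of order $|\zeta|^{-2}$ near the origin, and the null condition \eqref{null} provides the cancellation $\widehat g(0)=0$. The hypothesis $|y|g\in L^s(\mathbb{R}^3)$ with $s\in[1,6/5)$ combined with the Hausdorff--Young inequality gives $\nabla\widehat g = -i{\cal F}(yg)\in L^{s'}(\mathbb{R}^3)$ for $s'=s/(s-1)>6$, and Morrey embedding then yields $\widehat g\in C^{0,\alpha}$ with $\alpha=1-3/s'>1/2$. Combining $\widehat g(0)=0$ with this Hölder continuity produces $|\widehat g(\zeta)|\le C|\zeta|^\alpha \||y|g\|_{s,\mathbb{R}^3}$ on small $\zeta$, so that $\int_{|\zeta|\le 1}|\widehat v_g(\zeta)|^2\,d\zeta \lesssim \int_0^1 r^{2\alpha-2}\,dr$ converges precisely because $\alpha>1/2$, which is the reason for the restriction $s<6/5$. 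On $\{|\zeta|>1\}$ one uses $\|\widehat g\|_\infty\le\|g\|_{1,\mathbb{R}^3}$ together with the integrability of $|\zeta|^{-4}$ at infinity.

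The main obstacle is the rotating case $\omega\ne 0$, where \eqref{formula} contains the time integral $\int_0^\infty e^{-(|\zeta|^2-i{\cal R}\zeta_1)t}O_\omega(t)^\top[\,\cdot\,]\widehat f(O_\omega(t)\zeta)\,dt$ in which the orthogonal group $O_\omega$ rotates the frequency argument. The strategy is to rescale $s=|\omega|t$ and exploit the $2\pi$-periodicity of $O_\omega$ to recast the integral over $[0,\infty)$ as a geometric sum $\sum_{k\ge 0}e^{-2\pi k|\zeta|^2/|\omega|}$ multiplied by a single-period integral, after which the low-frequency $L^2$-analysis above is repeated with the understanding that the effective transverse scale near $\zeta=0$ is now $|\zeta|^2\sim|\omega|$. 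This parabolic rescaling is exactly what produces the weights $|\omega|^{-1/4}$ and $|{\cal R}|^{1/2}|\omega|^{-1/2}$ in \eqref{L2-est-whole}. Only the partial null condition \eqref{part-null} is needed because rotation averages out the components of $\int g\,dy$ transverse to $e_1$: the parallel component must be killed by hand via $e_1\cdot\widehat g(0)=0$, while the perpendicular ones are controlled by the extra $|\omega|^{-1/4}$ and $|{\cal R}|^{1/2}|\omega|^{-1/2}$ factors coming from the period-averaged Oseen kernel. I expect the careful bookkeeping of these anisotropic scales in the rotating case to be the delicate point of the proof.
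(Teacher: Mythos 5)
Your overall strategy coincides with the paper's: Plancherel, a low/high frequency split at $|\zeta|=1$, separating $\widehat f$ into the $g$ and $i\zeta\cdot\widehat G$ contributions, Hardy's inequality $\int|\widehat G|^2/|\zeta|^2\,d\zeta\leq C\|\nabla\widehat G\|_2^2$ for the divergence part, and the null condition on $g$ combined with first-order regularity of $\widehat g$ to tame the $|\zeta|^{-2}$ singularity. The one genuine technical departure is your treatment of the $g$-remainder: you deduce $\nabla\widehat g\in L^{s'}$, $s'>6$, from Hausdorff--Young and then invoke Morrey to get $|\widehat g(\zeta)-\widehat g(0)|\lesssim|\zeta|^\alpha\||y|g\|_s$ with $\alpha>1/2$; the paper instead writes the increment as $\int_0^1(O_\omega(t)\zeta)\cdot(\nabla\widehat g)(\sigma O_\omega(t)\zeta)\,d\sigma$, applies Cauchy--Schwarz in $t$, changes variables in $\sigma$, and uses H\"older in the radial variable, landing on the same threshold $s<6/5$. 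Both are correct; yours is arguably cleaner to state, the paper's avoids invoking the embedding. For the rotating case your geometric-sum picture is essentially a repackaging of the paper's computation: if you commute the Leray projection with the rotation via $O_\omega(t)^\top\bigl(\mathbb I-\frac{(O_\omega(t)\zeta)\otimes(O_\omega(t)\zeta)}{|\zeta|^2}\bigr)=\bigl(\mathbb I-\frac{\zeta\otimes\zeta}{|\zeta|^2}\bigr)O_\omega(t)^\top$ and then sum the geometric series in $k$ against the one-period integral, the factors $1-e^{-2\pi\lambda}$ cancel and you recover exactly the paper's $J_\pm(\zeta)=(|\zeta|^2-i({\cal R}\zeta_1\pm|\omega|))^{-1}$, whose $L^2(|\zeta|<1)$ norm gives $|\omega|^{-1/4}+|{\cal R}|^{1/2}|\omega|^{-1/2}$. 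Two things you leave implicit but must state: (i) the projection--rotation commutation identity is indispensable — without it, bounding the projector by 1 destroys the oscillation and leaves the non-integrable $|\zeta|^{-2}$; and (ii) on high frequencies in the rotating case you cannot apply Plancherel directly to the $t$-integral, you first need Cauchy--Schwarz in $t$, Fubini, and the change of variable $\zeta\mapsto O_\omega(t)^\top\zeta$ to reduce to $\int_{|\zeta|\geq 1}|\widehat f|^2|\zeta|^{-4}\,d\zeta$. With these made precise your argument closes.
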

\begin{proof}
Let us discuss the case $\omega\in\mathbb R^3\setminus\{0\}$ and consider
\[
\int_{\mathbb R^3}|\widehat v(\zeta)|^2d\zeta
=\int_{|\zeta|\geq 1} |\widehat v(\zeta)|^2d\zeta +\int_{|\zeta|<1} |\widehat v(\zeta)|^2d\zeta.
\]
Using the Schwarz inequality in the integral with respect to $t$ and then the Fubini theorem followed by the change of variable $\zeta \mapsto O_\omega(t)^\top\zeta$ together with the decomposition $\widehat f (\zeta) = \widehat g  (\zeta) + i \zeta \cdot \widehat G  (\zeta)$, 
 we see that the high frequency part of \eqref{formula} is estimated as
\begin{equation}
\begin{split}
\int_{|\zeta|\geq 1} |\widehat v(\zeta)|^2d\zeta
&\leq C\int_{|\zeta|\geq 1}
\left(\int_0^\infty e^{-|\zeta|^2t}
|\widehat f(O_\omega(t)\zeta)|\, dt \right)^2d\zeta  \\
& \leq C \int_{|\zeta|\geq 1}
\int_0^\infty e^{-|\zeta|^2t}
|\widehat f(O_\omega(t)\zeta)|^2\, dt\, \frac{d\zeta}{|\zeta|^2} 
\\
& = C \int_0^\infty  \int_{|\zeta|\geq 1}
e^{-|\zeta|^2t}
|\widehat f(\zeta)|^2\frac{d\zeta}{|\zeta|^2} \, dt  
\\
&=C\int_{|\zeta|\geq 1}\frac{|\widehat f(\zeta)|^2}{|\zeta|^4}\,d\zeta  \\
&\leq C\|\widehat g\|_{\infty,\mathbb R^3}^2
\int_{|\zeta|\geq 1}\frac{d\zeta}{|\zeta|^4}
+C\|\widehat G\|_{2,\mathbb R^3}^2 \\
&\leq C\left(\|g\|_{1,\mathbb R^3}^2+\|G\|_{2,\mathbb R^3}^2\right).
\end{split}
\label{high}
\end{equation}
Our main task is thus to study the low frequency part, which will be based on the decomposition
$$
\widehat f(O_\omega(t)\zeta) = \displaystyle  \widehat g(0)
+\int_0^1(O_\omega(t)\zeta)\cdot(\nabla\widehat g)(\sigma O_\omega(t)\zeta)d\sigma  + i (O_\omega(t)\zeta) \cdot \widehat G(O_\omega(t)\zeta).
$$
Note that the function $\widehat g$ is uniformly continuous
by $g\in L^1(\mathbb R^3)$ and thus $\widehat g(0)$ makes sense.
The above decomposition of $\widehat f(O_\omega(t)\zeta)$ splits \eqref{formula} into three parts:
\begin{equation*}
\begin{split}
&\qquad \int_{|\zeta|<1}    |\widehat v(\zeta)|^2d\zeta    \\
& =   \int_{|\zeta|<1}\Big|
\int_0^\infty e^{-(|\zeta|^2-i{\cal R}\zeta_1)t}
O_\omega(t)^\top
\left(
\mathbb I_3-\frac{(O_\omega(t)\zeta)\otimes (O_\omega(t)\zeta)}{|\zeta|^2}
\right) dt  \;\widehat g(0) \\
&\qquad \qquad 
+ \int_0^\infty e^{-(|\zeta|^2-i{\cal R}\zeta_1)t}
O_\omega(t)^\top
\left(
\mathbb I_3-\frac{(O_\omega(t)\zeta)\otimes (O_\omega(t)\zeta)}{|\zeta|^2}
\right) \int_0^1(O_\omega(t)\zeta)\cdot(\nabla\widehat g)(\sigma O_\omega(t)\zeta)d\sigma  dt \\
&\qquad \qquad + i \int_0^\infty e^{-(|\zeta|^2-i{\cal R}\zeta_1)t}
O_\omega(t)^\top
\left(
\mathbb I_3-\frac{(O_\omega(t)\zeta)\otimes (O_\omega(t)\zeta)}{|\zeta|^2}
\right) (O_\omega(t)\zeta)\cdot\widehat G(O_\omega(t)\zeta)
dt
\Big|^2 d\zeta \\
&\leq (I_1+I_2+I_3)^2.
\end{split}
\end{equation*}
For the last two integrals $I_2$ and $I_3$, one may ignore the oscillation.
In fact, we see from the Hardy inequality that
\begin{equation}
\begin{split}
I_3^2
&\leq C\int_{|\zeta|<1}\int_0^\infty
e^{-|\zeta|^2t}|\widehat G(O_\omega(t)\zeta)|^2dt\,d\zeta  = C\int_{|\zeta|<1}|\widehat G(\zeta)|^2\frac{d\zeta}{|\zeta|^2} \\
& \leq C\int_{\mathbb R^3}\frac{|\widehat G(\zeta)|^2}{|\zeta|^2} d\zeta
\leq C\|\nabla \widehat G\|_{2,\mathbb R^3}^2 
=C\big\||y| G\big\|_{2,\mathbb R^3}^2
\end{split}
\label{low-1}
\end{equation}
and that
\begin{equation}
\begin{split}
I_2^2
&\leq C\int_{|\zeta|<1}\int_0^\infty
e^{-|\zeta|^2t}
\int_0^1 |(\nabla\widehat g)(\sigma O_\omega(t)\zeta)|^2
d\sigma\,dt\,d\zeta  \\
&=C\int_{|\zeta|<1}
\int_0^1|(\nabla\widehat g)(\sigma\zeta)|^2d\sigma\,\frac{d\zeta}{|\zeta|^2} =C\int_0^1\int_{|\zeta|<\sigma}|(\nabla\widehat g)(\zeta)|^2\,
\frac{d\zeta}{|\zeta|^2}\,\frac{d\sigma}{\sigma} \\
&\leq C\int_0^1\left(\int_{|\zeta|<\sigma}|(\nabla\widehat g)(\zeta)|^{s/(s-1)}d\zeta\right)^{2(1-1/s)} \sigma^{-6(1-1/s)}\,d\sigma  \\
&\leq C\|\nabla\widehat g\|_{s/(s-1),\mathbb R^3}^2 \leq C\big\||y|g\big\|_{s,\mathbb R^3}^2
\end{split}
\label{low-2}
\end{equation}
where \eqref{ext-force-whole} with $s\in [1,6/5)$ is employed.
We thus obtain
\begin{equation}
I_2\leq C\big\||y|g\big\|_{s,\mathbb R^3}, \qquad
I_3\leq C\big\||y| G\big\|_{2,\mathbb R^3}.
\label{osc-ignore}
\end{equation}

For the crucial part $I_1$ we do need the assumption
\eqref{part-null}, that is, $e_1\cdot  \widehat g(0)=0$,
as well as the oscillation caused by the rotation.
By the relations
$$
 \widehat g(0)
=(e_1\cdot \widehat g(0))e_1+(e_1\times  \widehat g(0))\times e_1 = (e_1\times   \widehat g(0))\times e_1
$$
and
$$
O_\omega(t)^\top
\left(
\mathbb I_3-\frac{(O_\omega(t)\zeta)\otimes (O_\omega(t)\zeta)}{|\zeta|^2}
\right)
O_\omega(t) = \mathbb I_3-\frac{\zeta\otimes\zeta}{|\zeta|^2}
$$
we may write
$$
I_1^2=\int_{|\zeta|<1}|w(\zeta)|^2\, d\zeta
$$
with
$$
\begin{array}{rcl}
w(\zeta)&:=&\displaystyle \int_0^\infty e^{-(|\zeta|^2-i{\cal R}\zeta_1)t}
\left(
\mathbb I_3-\frac{\zeta \otimes \zeta}{|\zeta|^2}
\right) O_\omega(t)^\top dt\,
[(e_1\times  \widehat g(0))\times e_1] \medskip  \\
&=&\displaystyle \left(
\mathbb I_3-\frac{\zeta\otimes\zeta}{|\zeta|^2}
\right)
\phi(\zeta), 
\end{array}
$$
where
\[
\phi(\zeta)
:=\int_0^\infty e^{-(|\zeta|^2-i{\cal R}\zeta_1)t}O_\omega(t)^\top dt\,
[(e_1\times  \widehat g(0))\times e_1].
\]
An elementary computation yields
\[
\phi(\zeta)
=\frac{1}{2}
\left[
\begin{array}{c}
0 \\
\alpha_- J_+ +\alpha_+ J_- \\
i\alpha_- J_+ -i\alpha_+ J_-
\end{array}
\right]
\]
with
\[
\alpha_\pm:=\widehat g_2(0)\pm i\widehat g_3(0), \qquad
J_\pm(\zeta):=\frac{1}{|\zeta|^2-i({\cal R}\zeta_1\pm |\omega|)}.
\]
We then find
\begin{equation}
\int_{|\zeta|<1}|J_\pm(\zeta)|^2 d\zeta\leq
\int_{\mathbb R^3}\frac{d\zeta}{|\zeta|^4+({\cal R}\zeta_1\pm |\omega|)^2}
\leq C\left(
\frac{1}{\sqrt{|\omega|}}+\frac{|\cal R|}{|\omega|}
\right)
\label{dependence}
\end{equation}
for all $|\omega|>0$ and ${\cal R}\in\mathbb R$, see \eqref{parame}.
In fact, we immediately see that
\[
\int_{\mathbb R^3}\frac{d\zeta}{|\zeta|^4+|\omega|^2}
=\frac{1}{\sqrt{|\omega|}}
\int_{\mathbb R^3}\frac{d\zeta}{|\zeta|^4+1}
\]
for the case ${\cal R}=0$,
while the integral above for the other case ${\cal R}\neq 0$ is rewritten as
\[
\frac{2\pi}{\sqrt{|\omega|}}\int_0^\pi\sin\theta\int_0^\infty
\frac{\rho^2\,d\rho}{\rho^4+\Big(\frac{\cal R}{\sqrt{|\omega|}}\,\rho\cos\theta\pm 1\Big)^2}\,d\theta.
\]
For the latter case, it is reasonable to split the
integral with respect to $\rho$ into two parts:
$$
\int_0^\infty=
\int_0^{\sqrt{|\omega|}/(2|{\cal R}|)}
+\int_{\sqrt{|\omega|}/(2|{\cal R}|)}^\infty.
$$
Then we have
\[
\int_0^{\sqrt{|\omega|}/(2|{\cal R}|)}\frac{\rho^2\,d\rho}{\rho^4+\Big(\frac{\cal R}{\sqrt{|\omega|}}\,\rho\cos\theta\pm 1\Big)^2}
\leq \int_0^\infty\frac{\rho^2}{\rho^4+\frac{1}{4}}\,d\rho
\]
as well as
\[
\int_{\sqrt{|\omega|}/(2|{\cal R}|)}^\infty \frac{\rho^2\,d\rho}{\rho^4+\Big(\frac{\cal R}{\sqrt{|\omega|}}\,\rho\cos\theta\pm 1\Big)^2}
\leq \int_{\sqrt{|\omega|}/(2|{\cal R}|)}^\infty\,\frac{d\rho}{\rho^2}
\leq\frac{2|{\cal R}|}{\sqrt{|\omega|}}.
\]
We thus obtain \eqref{dependence}, which leads us to
\begin{equation}
I_1\leq
C\left(
\frac{1}{\sqrt{|\omega|}}+\frac{|\cal R|}{|\omega|}
\right)^{1/2}| \widehat g(0)|.
\label{osc}
\end{equation}
We collect \eqref{high}, \eqref{osc-ignore} and \eqref{osc} to conclude
\eqref{L2-est-whole}.

For the other case $\omega=0$, the high frequency part is the same
as in $\mbox{\eqref{high}}_3$ and the low frequency part can be treated
as in \eqref{low-1}--\eqref{low-2} by use of $\widehat g(0)=0$,
so that we obtain \eqref{L2-est2-whole}.
The proof is complete.
\end{proof}

\begin{proof}[Proof of Proposition \ref{square-est}.]
We first discuss the flow with vanishing flux condition.
Let us fix $R>0$ such that $\mathbb R^3\setminus\Omega\subset B_R$,
and take a cut-off function
$\psi\in C_0^\infty(B_{3R}; [0,1])$ such that
$\psi(x)=1$ for $x\in B_{2R}$.
Given $(u,q)$ which is of class \eqref{class} together with
$\int_{\partial\Omega}n\cdot u\,d\gamma=0$
and satisfies \eqref{linear-ext},
we set
\[
\widetilde u=(1-\psi)u+\mathbb B[u\cdot\nabla\psi], \qquad
\widetilde q=(1-\psi)q,
\]
where $\mathbb B$ denotes the Bogovskii operator in the domain
$A_{R,3R}$, see \cite{Bog} and \cite[Theorem III.3.3, p.179]{G}.
Note that $u\cdot\nabla\psi \in W_0^{1,2}(A_{R,3R})$ with 
\[
\int_{A_{R,3R}} u\cdot\nabla\psi  dx =  
\int_{\partial B_{R}} u\cdot \frac{-x}{R} d\gamma =
\int_{\partial \Omega} u\cdot n d\gamma = 0
\]
so that $\mathbb B[u\cdot\nabla\psi]\in W^{2,2}_0(A_{R,3R})$ is well-defined with the relation $\div\mathbb B[u\cdot\nabla\psi]=u\cdot\nabla\psi$ and

\begin{equation}
\|  \mathbb B[u\cdot\nabla\psi] \|_{2,2,A_{R,3R}} \leq C \| u\cdot\nabla\psi \|_{1,2,A_{R,3R}}
\leq C \| u  \|_{1,2,A_{R,3R}} \leq C(\|u\|_{6,A_{R,3R}}+\|\nabla u\|_{2,A_{R,3R}})
\leq C \| \nabla u  \|_{2,\Omega}.
\label{BogEst}
\end{equation}
Then the pair $(\widetilde u, \widetilde q)$ obeys
\begin{equation}
-\Delta\widetilde u+\nabla\widetilde q-V\cdot\nabla\widetilde u
+\omega\times\widetilde u
=h+\mbox{div $[(1-\psi)F]$}, \quad
\div\widetilde u=0 \quad \mbox{ in } \mathbb R^3,
\label{reduce-1}
\end{equation}
where
\begin{equation*}
\begin{split}
h&=
2\nabla\psi\cdot\nabla u+(\Delta\psi+V\cdot\nabla\psi)u
-\Delta\mathbb B[u\cdot\nabla\psi]-V\cdot\nabla\mathbb B[u\cdot\nabla\psi] \\
&\quad +\omega\times\mathbb B[u\cdot\nabla\psi]-(\nabla\psi)q
+F(\nabla\psi)
\end{split}
\end{equation*}
which satisfies
\begin{equation}
\int_{\mathbb R^3}h(x)\,dx=N,
\label{force}
\end{equation}
and $N$ denotes the net force \eqref{net-force}.
In fact,
\begin{equation*}
\begin{split}
\int_{\mathbb R^3}h(x)\,dx
&=-\int_{A_{R,3R}} \mbox{div $[\sigma(\widetilde u,\widetilde q)+\widetilde u\otimes V-(\omega\times x)\otimes\widetilde u+(1-\psi)F]$}\,dx
 \\
&=-\int_{|x|=3R}[\sigma(u,q)+u\otimes V-(\omega\times x)\otimes u+F]\frac{x}{3R}\,d\gamma
\end{split}
\end{equation*}
yielding \eqref{force} by use of \eqref{linear-ext}.

By the transformations \eqref{changevariables}, we consider
\begin{equation*}
\begin{split}
&v(y)=M\widetilde u\left(M^\top y+\frac{\omega\times\xi}{|\omega|^2}\right), \\
&p(y)=\widetilde q\left(M^\top y+\frac{\omega\times\xi}{|\omega|^2}\right), \\
&g(y)=Mh\left(M^\top y+\frac{\omega\times\xi}{|\omega|^2}\right), \\
&G(y)=\left(M[(1-\psi)F]M^\top\right)\left(M^\top y+\frac{\omega\times\xi}{|\omega|^2}\right),
\end{split}
\end{equation*}
and it can be shown that $(v,p)$ obeys \eqref{linear-whole} with $f=g+\mbox{div }G$
which satisfies \eqref{ext-force-whole}--\eqref{part-null} subject to
\begin{equation}
\begin{split}
&\quad \|g\|_{1,\mathbb R^3}+\big\||y|g\big\|_{1,\mathbb R^3} \\
&=\|h\|_{1,\mathbb R^3}+
\left\|\Big|x-\frac{\omega\times\xi}{|\omega|^2}\Big| h\right\|_{1,\mathbb R^3}  \\
&\leq C_R
\left(1+\frac{|\omega\times\xi|}{|\omega|^2}\right)
\big[
(1+|\xi|+|\omega|)\|\nabla u\|_{2,\Omega}+\|q\|_{2,\Omega}+\|F\|_{2,\Omega}
\big],
\end{split}
\label{force-est1}
\end{equation}
as well as
\begin{equation}
\|G\|_{2,\mathbb R^3} + \||y|G\|_{2,\mathbb R^3} \leq \left(1 + \frac{|\omega\times\xi|}{|\omega|^2}\right) \|F\|_{2,\Omega} + \| |x|F\|_{2,\Omega}.
\label{force-est2}
\end{equation}
The condition \eqref{part-null} is in fact verified as
\begin{equation}
e_1\cdot\int_{\mathbb R^3}g(y)\,dy
=\frac{\omega}{|\omega|}\cdot\int_{\mathbb R^3}h(x)\,dx=0
\label{reduced-null}
\end{equation}
by use of \eqref{force} and the assumption $\omega\cdot N=0$.

Since $u\in L^6(\Omega)$ implies that
$v\in L^6(\mathbb R^3)\subset {\cal S}^\prime(\mathbb R^3)$,
$v$ coincides with \eqref{formula} on the Fourier side by
the reasoning mentioned just before Lemma \ref{square-est-whole}.
By taking \eqref{force-est1}--\eqref{reduced-null} into account,
we obtain from Lemma \ref{square-est-whole} that
there are constants $C,\,C'>0$ satisfying
\[
\|u\|_{2,\Omega{3R}}
\leq\|\widetilde u\|_{2,\mathbb R^3}
=\|v\|_{2,\mathbb R^3}
\leq C K(u,q,F,\xi,\omega,0)+C'\||x|F\|_{2,\Omega},
\]
where $K(u,q,F,\xi,\omega,0)$ is given by \eqref{quantity} with $\Phi=0$.
Here, $\left|\int_{\mathbb R^3}g(y)dy\right|$
has been just replaced by $\|g\|_{1,\mathbb R^3}$ in \eqref{L2-est-whole}.
On the other hand, we have
\[
\|u\|_{2,\Omega_{3R}}
\leq C\|u\|_{6,\Omega}
\leq C\|\nabla u\|_{2,\Omega}.
\]
Combining the estimates above implies \eqref{L2-est} when $\Phi=0$.
If we prefer to keep
\[
\left|\int_{\mathbb R^3}g(y)\,dy\right|
=\left|\int_{\mathbb R^3}Mh(x)\,dx\right|
=|N|
\]
in \eqref{L2-est-whole} as it is, we obtain \eqref{L2-est3} with
$\Phi=0$ as well.

For general case without any condition at the boundary $\partial\Omega$,
let us reduce the problem to the case discussed above by
lifting the flux $\Phi=\int_{\partial\Omega}n\cdot u\,d\gamma$.
We fix $x_0\in\mbox{int $(\mathbb R^3\setminus\Omega)$}$
and take the flux carrier
\[
{\cal W}(x)=\Phi\nabla\frac{1}{4\pi |x-x_0|}.
\]
Note that one cannot always choose $x_0=0$ (center of mass of the rigid body).
Then the pair
\[
U=u-{\cal W},  \qquad
Q=q-(\xi+\omega\times x_0)\cdot {\cal W}
\]
obeys
\[
-\Delta U+\nabla Q-V\cdot\nabla U+\omega\times U=\mbox{div $F$}, \qquad
\mbox{div $U$}=0
\]
in $\Omega$ subject to
$\int_{\partial\Omega}n\cdot U\,d\gamma=0$.
Concerning the net force \eqref{net-force}, we observe
\[
N_{U,Q}=N_{u,q}+\Phi(\omega\times x_0)
\]
as verified in \cite[Section 6]{HST}
(in which the nonlinear momentum flux is discussed, however,
all the computations for the linear part are included there),
so that the condition $\omega\cdot N_{u,q}=0$ implies
$\omega\cdot N_{U,Q}=0$.
Hence, we already know that
\[
\|U\|_{2,\Omega}
\leq C K(U,Q,F,\xi,\omega,0)+C'\||x|F\|_{2,\Omega}.
\]
Since
\[
\|\nabla U\|_{2,\Omega}\leq\|\nabla u\|_{2,\Omega}+C|\Phi|, \qquad
\|Q\|_{2,\Omega}\leq \|q\|_{2,\Omega}+C(|\xi|+|\omega|)|\Phi|
\]
as well as
\[
\|{\cal W}\|_{2,\Omega}\leq C|\Phi|,
\]
one conculdes \eqref{L2-est}.
The other case $\omega=0$ is also discussed in the same way as above.

\end{proof}

\section{The state system and the cost functional}\label{sec_pb}

As explained in the introduction, our aim is to find a control $v_*$ for which 
\begin{equation}
\inf W(v,p) = \inf\int_{\partial\Omega} v \cdot \sigma(v,p) n\,d\gamma,
\label{infdrag}
\end{equation}
is attained (see \eqref{workdrag}). In \eqref{infdrag} the infimum is taken over the set of all possible states $(v,p)$ satisfying \eqref{000}--\eqref{004} for $v_*$ either in 
\begin{equation}
\mathcal{V}_\tau := \left\{ v_* \in W^{3/2,2}(\partial \Omega) \ ;\  v_*\cdot n = 0 \mbox{ on } \partial \Omega\right\}
\label{defesVtau}
\end{equation}
or in
\begin{equation}
\mathcal{V}_{\Gamma} := \left\{ v_* \in W^{3/2,2}(\partial \Omega) \ ;\  v_*=0\quad \text{on} \ \partial \Omega\setminus \Gamma \right\},
\label{defesVchi}
\end{equation}
where $\Gamma$ is a nonempty open subset of $\partial \Omega$.

We recall that in \cite{HST} we studied the case of subspaces of $\mathcal{V}_\tau$ and of $\mathcal{V}_{\Gamma}$ of finite dimension:
\begin{equation}
{\mathcal C}_\tau := \operatorname{span} \left\{ (g^{(i)}\times n) \times n, (G^{(i)} \times n) \times n \ ; \ i=1,2,3 \right\},
\label{controlspacetau}
\end{equation}
and
\begin{equation}
{\mathcal C}_\chi := \operatorname{span}\left\{ \chi g^{(i)}, \chi G^{(i)}\ ; \ i=1,2,3 \right\},
\label{controlspace}
\end{equation} 
where $\chi \not \equiv 0$ is a non-negative smooth function such that $\supp(\chi)\subset \Gamma$
and
where $g^{(i)}$, $G^{(i)}$ are defined as follows. 

First, we introduce a set of  generalized Oseen systems associated with the basic rigid motions: for each $i \in \{1,2,3\}$, $(v^{(i)},q^{(i)})$ and $(V^{(i)},Q^{(i)})$ are the solutions of 
\begin{equation}
\begin{array}{c}
\displaystyle -\div \sigma(v^{(i)},q^{(i)}) +  (\xi+\omega\times x)\cdot \nabla v^{(i)} -  \omega \times v^{(i)} =0 \quad \text{in } \Omega \medskip \\
\displaystyle \div v^{(i)} =0 \quad \text{in } \Omega \medskip \\
\displaystyle v^{(i)}=e_i \quad \text{on}\ \partial \Omega \medskip \\
\displaystyle \lim_{|x|\to \infty} v^{(i)}= 0 
\end{array}
\label{313}
\end{equation}
and
\begin{equation}
\begin{array}{c}
\displaystyle
-\div \sigma(V^{(i)},Q^{(i)})+  (\xi+\omega\times x)\cdot \nabla V^{(i)} -  \omega \times V^{(i)} =0 \quad \text{in } \Omega \medskip \\
\displaystyle \div V^{(i)} =0 \quad \text{in } \Omega \medskip \\
\displaystyle V^{(i)}=e_i\times x \quad \text{on}\ \partial \Omega \medskip \\
\displaystyle \lim_{|x|\to \infty} V^{(i)}= 0
\end{array}
\label{323}
\end{equation}
where $(e_1,e_2,e_3)$ is the canonical basis of $\mathbb{R}^3$. 
Then the fields $g^{(i)}$ and $G^{(i)}$ are given by
\begin{equation}\label{014}
g^{(i)}:=\sigma(v^{(i)},q^{(i)})n  \quad \text{on } \partial \Omega,
\end{equation}
\begin{equation}\label{024}
G^{(i)}:=\sigma(V^{(i)},Q^{(i)})n \quad \text{on } \partial \Omega,
\end{equation}
being in $W^{3/2,2}(\partial\Omega)$ provided $\partial\Omega\in C^3$, see \cite[Lemma 3.2]{HST}.
If $|\xi|$ and $|\omega|$ are small enough, then ${\mathcal C}_\tau$ and ${\mathcal C}_\chi$ are of dimension 6 (see \cite[Theorem 1.1]{HST}). We proved in  \cite{HST} that, for $|\xi|$ and $|\omega|$ small enough, there exists only one $v_*\in {\mathcal C}_\tau$ (resp. $v_*\in {\mathcal C}_\chi$) such that there exists a solution $(v,p)$ of \eqref{000}--\eqref{004}. 

Here, we want to consider controls $v_*$ in $\mathcal{V}_\tau$ or $\mathcal{V}_{\Gamma}$ and characterize the optimal controls that minimize \eqref{workdrag}. However, before starting the analysis of the minimization problem, we must notice that for an arbitrary $v_*\in \mathcal{V}_\tau$ or $v_*\in \mathcal{V}_{\Gamma}$, 
system \eqref{000}--\eqref{004} has no solution in general since the unique solution $(v,p)$ of \eqref{000}--\eqref{003} 
may not verify the self-propelled conditions \eqref{903}, \eqref{004}. 
In order to handle this difficulty, the solution to our problem consists of a boundary velocity $v_*$ which can be decomposed into two parts, one part which effectively acts as the infinite dimensional control (to alleviate the presentation, we keep denoting it by $v_*$)
and another part, say $v_*^{\mathcal C}$, that ``corrects'' the control in order to enforce the self-propelled conditions and belongs to the finite dimensional spaces ${\mathcal C}_\tau$ and ${\mathcal C}_\chi$.


Then, assuming that the rigid body velocity $V$ is given and having in mind the boundary control problems for $v_* \in  {\mathcal V}_\tau$ or $v_* \in  \mathcal{V}_{\Gamma}$, we write the state system \eqref{000}--\eqref{004} in the form
\begin{gather}
-\div \sigma(v,p)+  (v-V)\cdot \nabla v+ \omega\times v =0 \quad \text{in} \ \Omega \label{ncy0.0}\\
\div v =0 \quad \text{in} \ \Omega \label{ncy0.1}\\
v=V+\vstar+\vperp \quad \text{on}\ \partial \Omega\label{ncy0.2}\\
\lim_{|x|\to \infty} v(x) = 0\label{ncy0.3}\\
m \xi\times \omega + \int_{\partial \Omega} \left[-\sigma(v,p)n +  \left((v_*+v_*^{\mathcal C})\cdot n \right) (v_* +v_*^{\mathcal C}+ V + \omega \times x)
  \right] \ d \gamma =0
\label{ncy0.4}\\
(I \omega)\times \omega + \int_{\partial \Omega} x\times \left[-\sigma(v,p)n +  \left((v_*+v_*^{\mathcal C})\cdot n \right) (v_* +v_*^{\mathcal C}+ V + \omega \times x) \right] \ d \gamma =0
\label{ncy0.5}
\\
v_*^{\mathcal C} \in {\mathcal C}_\tau \quad \text{or} \quad v_*^{\mathcal C} \in {\mathcal C}_\chi.
\label{ncy0.51}
\end{gather}

In what follows, we use the notation
\begin{equation}\label{weight}
\varpi (x):= 
\left\{ \begin{array}{ll}
\displaystyle \left(1+\left|x-\frac{\omega\times\xi}{|\omega|^2}\right|\right)
\left(1 + 2\,\frac{|\omega\cdot \xi|}{| \omega |}\, s(x)\right),
\qquad & \omega \not=0, \medskip \\
\displaystyle (1+|x|)\left(1+2 (|\xi||x|+\xi\cdot x )\right),  & \omega = 0,
\end{array} 
\right.
\end{equation}
where
\[
s(x):=\left|x-\frac{\omega\times\xi}{|\omega|^2}\right|
+\frac{\operatorname{sign}(\omega\cdot\xi)}{|\omega|}\omega\cdot x, \qquad
\omega\neq 0.
\]

Extending the results of \cite[Theorem 1.1 and Theorem 1.2]{HST}, we obtain the following for our state system.
\begin{theorem}\label{Tmain} 
Let $\Omega$ be of class $C^3$. There exist constants $c_0, C_1, C_2 >0$, which depend on $\Omega$, such that if $\xi, \omega\in\mathbb R^3$ and $\vstar\in \mathcal{V}_\tau$
(resp. $\vstar\in \mathcal{V}_{\Gamma}$)
satisfy
\begin{equation}\label{new010}
|\xi|\leq c_0, \qquad |\omega|\leq c_0,
\qquad 
\|\vstar\|_{3/2,2,\partial\Omega}\leq c_0,
\end{equation}
then the following assertions hold.
\begin{enumerate}
\item
A solution $(v,p,\vperp)$ of the problem \eqref{ncy0.0}--\eqref{ncy0.51} can be found within the class
\begin{equation}\label{new020}
\varpi v\in L^\infty(\Omega), \quad
(v,p) \in W^{2,2}(\Omega)\times W^{1,2}(\Omega), \quad
\vperp \in \mathcal{C}_\tau \quad (\text{resp.}\  \vperp\in \mathcal{C}_\chi)
\end{equation}
along with estimates
\begin{equation}
\lceil v\rceil_{1,\varpi,\Omega}
+\|\nabla v\|_{2,\Omega}
+ \|\vperp \|_{3/2,2,\partial\Omega}
\leq C_1\big(|(\xi,\omega)|
+\|\vstar\|_{3/2,2,\partial\Omega}
\big),
\label{main-est}
\end{equation}
\begin{equation}
\|\nabla^2 v\|_{2,\Omega}
+\|p\|_{1,2,\Omega}
\leq C_2\big(|(\xi,\omega)|+\|\vstar\|_{3/2,2,\partial\Omega}\big),
\label{main-est-add}
\end{equation}
and the energy equation
\begin{equation}\label{drag}
\int_{\partial\Omega}\big(\sigma(v,p)n\big)\cdot v\,d\gamma
=2  \int_{\Omega}|D(v)|^2\,dx
+\frac{1}{2}\int_{\partial\Omega}(v_*+v_*^{\mathcal C})\cdot n |V+v_*+v_*^{\mathcal C}|^2\,d\gamma.
\end{equation}

\item 
The solution of the problem \eqref{ncy0.0}--\eqref{ncy0.51} is unique (up to constants for the pressure) within the class of functions satisfying \eqref{main-est} as well as $\vperp \in\mathcal{C}_\tau$ (resp. $\vperp\in \mathcal{C}_\chi$). The pressure is singled out under the additional condition $p\in L^2(\Omega)$.
\end{enumerate}
\end{theorem}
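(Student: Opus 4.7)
The strategy is to extend the Banach fixed point scheme of \cite[Theorems 1.1 and 1.2]{HST}, treating $\vstar$ as an additional small perturbation of the boundary value $V$ while tuning the finite-dimensional corrector $\vperp\in\mathcal C_\tau$ (resp.\ $\mathcal C_\chi$) to enforce the self-propelled conditions \eqref{ncy0.4}--\eqref{ncy0.5}. Given a trial field $\tilde v$ in a small ball and any candidate corrector $\vperp$, the linear generalized Oseen system
\[
-\div\sigma(v,p)-V\cdot\nabla v+\omega\times v=-\tilde v\cdot\nabla\tilde v,\qquad \div v=0\ \text{ in }\Omega,
\]
with boundary value $v|_{\partial\Omega}=V+\vstar+\vperp$ and decay at infinity admits, by combining the anisotropic pointwise estimate of \cite[Proposition 2.1]{HST} with Proposition \ref{P23}, a unique $(v,p)$ satisfying
\[
\lceil v\rceil_{1,\varpi,\Omega}+\|v\|_{2,2,\Omega}+\|p\|_{1,2,\Omega}\leq C\big(|(\xi,\omega)|+\|\vstar\|_{3/2,2,\partial\Omega}+\|\vperp\|_{3/2,2,\partial\Omega}+\lceil\tilde v\rceil_{1,\varpi,\Omega}\|\nabla\tilde v\|_{2,\Omega}\big),
\]
the last term controlling $\tilde v\cdot\nabla\tilde v$ through the pointwise properties of the weight $\varpi$.

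Inserting this linear solution into \eqref{ncy0.4}--\eqref{ncy0.5} yields a map $\Psi:\mathcal C\to\mathbb R^6$ which, by the definitions \eqref{014}--\eqref{024} of the basis fields $g^{(i)},G^{(i)}$, has derivative at the origin (when $\vstar=0$ and $\tilde v=0$) equal to the isomorphism identified in \cite[Theorem 1.1]{HST}. Under \eqref{new010}, $\Psi$ remains a small perturbation of that isomorphism, so Banach's theorem on a ball of $\mathcal C$ uniquely solves $\Psi(\vperp)=0$ and produces a corrector $\vperp=\vperp(\tilde v,\vstar)$ with $\|\vperp\|_{3/2,2,\partial\Omega}\leq C(|(\xi,\omega)|+\|\vstar\|_{3/2,2,\partial\Omega}+\lceil\tilde v\rceil_{1,\varpi,\Omega}\|\nabla\tilde v\|_{2,\Omega})$ that depends Lipschitz-continuously on $\tilde v$. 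Composing the two steps defines a mapping $T:\tilde v\mapsto v$ on the small ball $\{u:\lceil u\rceil_{1,\varpi,\Omega}+\|\nabla u\|_{2,\Omega}\leq\delta\}$; the bilinearity of the nonlinearity together with the bounds above ensure, for $c_0$ small enough, that $T$ is a self-map and a strict contraction. Its unique fixed point yields the sought $(v,p,\vperp)$, and the bound \eqref{main-est-add} is inherited from Proposition \ref{P23}.

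The energy identity \eqref{drag} follows from testing \eqref{ncy0.0} with $v$ and integrating over $\Omega$: the decay $\varpi v\in L^\infty$ combined with $\nabla v\in L^2$ and $p\in L^2$ legitimizes the integration by parts and eliminates boundary contributions at infinity, $(\omega\times v)\cdot v\equiv 0$ removes the rotation term, and $\div(v-V)=0$ lets one rewrite the convective contribution as a divergence, producing the boundary integral involving $|V+\vstar+\vperp|^2$. Uniqueness within the class \eqref{main-est} follows by subtracting two solutions and applying the same linear estimate, the smallness \eqref{new010} forcing the difference to vanish. The main delicate point beyond \cite{HST} is to carry the weighted anisotropic framework through the joint perturbation by $\vstar$ and by the nonlinear iterate $\tilde v$; stability of the corrector's invertibility under these perturbations is a direct consequence of the continuity of the Oseen solver asserted in \cite[Proposition 2.1]{HST}.
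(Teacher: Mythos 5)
Your fixed-point architecture matches the paper's: iterate on the velocity field, invoke a generalized Oseen solver (combining Proposition~\ref{P23} with \cite[Proposition 2.1 and 4.5]{HST}), and use the invertibility of the $6\times6$ matrix built from the boundary stresses $g^{(i)},G^{(i)}$ (which is \cite[Lemma 4.3]{HST}, not Theorem~1.1) to determine the corrector $\vperp\in\mathcal C_\tau$ (resp.\ $\mathcal C_\chi$). Your nested arrangement (inner fixed point producing $\vperp=\vperp(\tilde v,\vstar)$, outer contraction in $\tilde v$) differs slightly from the paper's joint iteration on the triple $(\overline v,\overline\alpha,\overline\beta)\in\mathcal X$, but this is an organizational variant and either would work once the corrector's Lipschitz dependence on $\tilde v$ is verified.

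There is, however, a genuine gap in the intermediate linear estimate you assert, namely the appearance of the full $W^{2,2}$-norm
\[
\lceil v\rceil_{1,\varpi,\Omega}+\|v\|_{2,2,\Omega}+\|p\|_{1,2,\Omega}\leq C\big(\,\cdots\,\big).
\]
For the generalized Oseen problem in an exterior domain with \emph{arbitrary} boundary data (i.e.\ arbitrary $\vperp$, before the self-propelling conditions \eqref{ncy0.4}--\eqref{ncy0.5} are enforced), the velocity in general decays only like $|x|^{-1}$ at infinity and therefore is \emph{not} in $L^2(\Omega)$; Proposition~\ref{P23} only controls the homogeneous seminorms $|v|_{1,2}$, $|v|_{2,2}$ and the $L^\infty$-norm, not $\|v\|_{2,\Omega}$. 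The membership $v\in L^2(\Omega)$ in \eqref{new020} is a consequence of the zero-net-force condition, and the paper establishes it as a separate step after the fixed point (invoking \cite[Theorem 1.2]{HST}, or equivalently the new Proposition~\ref{L2estimate} of Section~\ref{newL2estimates}). Your proposal never makes this step, and the issue also undermines your justification of the energy identity: when $\omega\neq 0$ and $\omega\cdot\xi=0$ the weight is only $\varpi\sim(1+|x|)$, so $\varpi v\in L^\infty$ does not by itself give $v\in L^2$, whereas the cut-off argument for \eqref{drag} needs $\int_\Omega|v|^2\,dx<\infty$ to kill the boundary terms at infinity. You must add, after the contraction produces $(v,p,\vperp)$ satisfying \eqref{ncy0.4}--\eqref{ncy0.5}, an explicit application of the $L^2$-estimate under the self-propelling condition to obtain $v\in L^2(\Omega)$, and only then derive \eqref{drag}.
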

The emphasis is finite kinetic energy $v\in L^2(\Omega)$, see \eqref{new020}, as a consequence of the self-propelling condition \eqref{ncy0.4}, and this helps us to justify the energy relation \eqref{drag}.
As in \cite{HST}, our notion of solution to \eqref{ncy0.0}--\eqref{ncy0.51} is the standard weak solution $\nabla v\in L^2(\Omega)$, such that
\begin{equation}
2  \int_{\Omega} D(v):D(\varphi) dx +  \int_{\Omega} (v-V) \cdot \nabla v \cdot \varphi dx +  \int_{\Omega} (\omega \times v)\cdot \varphi dx = 0, \quad \forall \varphi \in {\mathcal D}(\Omega),
\label{weakform}
\end{equation}
which, due to the extra regularity obtained in Theorem \ref{Tmain}, will satisfy the equations \eqref{903}--\eqref{004} in the strong form. We will only sketch the proof of Theorem \ref{Tmain} in the case of localized controls $\mathcal{C}_\chi$ since it is completely similar to the proof of \cite[Theorem 1.1]{HST}.
\begin{proof}[Proof of Theorem \ref{Tmain}]
Let 
$$
\mathcal{X}:=\left\{
(v,\alpha,\beta)\in D^{1,2}(\Omega)\times \mathbb R^3\times \mathbb R^3 \ ; \ 
\lceil v\rceil_{1,\varpi,\Omega}<\infty \right\}
$$
endowed with the norm
$$
\|(v,\alpha,\beta)\|_{\mathcal{X}} 
:=|v|_{1,2,\Omega} + \lceil v\rceil_{1,\varpi,\Omega} +|(\alpha,\beta)|.
$$
Assume $(\overline v,\overline \alpha,\overline \beta)\in \mathcal{X}$, and let 
$$
\overline v_*^{\mathcal C}:=\sum_{i=1}^3 \Big(\overline \alpha_i \chi g^{(i)}+\overline \beta_i \chi G^{(i)}\Big),
$$
$$
f(\overline{v}):=- \overline{v}\cdot \nabla \overline{v}=\div F(\overline{v}), \quad F(\overline{v}):=-\overline{v}\otimes \overline{v},
$$
\begin{equation}\label{077}
\xi_f(\overline v_*^{\mathcal C}):=- \int_{\partial \Omega} ((v_*+\overline v_*^{\mathcal C}) \cdot n ) ( (v_*+\overline v_*^{\mathcal C}+ V + \omega \times x) d\gamma  
	-  m \xi \times \omega  
	- \int_{\partial \Omega} (V\cdot n) (V+v_*+\overline v_*^{\mathcal C}) d \gamma,
\end{equation}
\begin{equation}\label{078}
\omega_f(\overline v_*^{\mathcal C}):= - \int_{\partial \Omega} x\times ( {v_*} +\overline v_*^{\mathcal C}+ V + \omega \times x ) ((v_*+\overline v_*^{\mathcal C})\cdot n )d\gamma  
-  (I\omega) \times \omega - \int_{\partial \Omega} x \times (V+v_*+\overline v_*^{\mathcal C})(V\cdot n)d \gamma.
\end{equation}

Following \cite[Lemma 4.1]{HST}, we introduce the following auxiliary linear systems
\begin{equation}
\left\{
\begin{array}{c}
\displaystyle -\div \sigma(u^{(j)},p^{(j)}) - V \cdot \nabla u^{(j)} + \omega \times u^{(j)} =0 \medskip \\
\displaystyle \div u^{(j)} =0\medskip \\
\displaystyle u^{(j)}=\chi g^{(j)}  \mbox{ or }  (g^{(i)}\times n) \times n \quad \text{on}\ \partial \Omega\medskip\\
\displaystyle \lim_{|x|\to \infty} u^{(j)} = 0,
\end{array}
\label{113xi}
\right.
\end{equation}
\begin{equation}
\left\{
\begin{array}{c}
\displaystyle
-\div \sigma(U^{(j)},P^{(j)}) - V \cdot \nabla U^{(j)} + \omega \times U^{(j)}=0\medskip\\
\displaystyle \div U^{(j)} =0 \medskip \\
\displaystyle U^{(j)}=\chi G^{(j)} \mbox{ or } (G^{(i)}\times n) \times n \quad \text{on}\ \partial \Omega \medskip \\
\displaystyle \lim_{|x|\to \infty} U^{(j)} = 0.
\end{array}
\label{123xi}
\right.
\end{equation}
\begin{equation}
\left\{
\begin{array}{c}
\displaystyle
-\div \sigma(u_f,p_f) - V \cdot \nabla u_f +  \omega \times u_f =f(\overline{v})\medskip\\
\div u_f =0\medskip\\
u_f=V+v_* \quad \text{on}\ \partial \Omega\medskip\\
\displaystyle \lim_{|x|\to \infty} u_f = 0.
\end{array}
\label{133xi}
\right.
\end{equation}
Using systems \eqref{113xi}--\eqref{133xi} and \cite[Proposition 4.5]{HST}, we can solve the following problem: for any $\vstar\in \mathcal{V}_{\Gamma}$,
and for $\xi$ and $\omega$ satisfying \eqref{new010} with some constant $c_0$ small enough,
there exists a unique $(v,\vperp,p)$ such that
\begin{gather}
-\div \sigma(v,p)  -V\cdot \nabla v+ \omega\times v =f(\overline{v}) \quad \text{in} \ \Omega \label{lis2.0}\\
\div v =0 \quad \text{in} \ \Omega \label{ncy2.1}\\
v=V+\vstar+\vperp \quad \text{on}\ \partial \Omega\label{ncy2.2}\\
\lim_{|x|\to \infty} v(x) = 0\label{ncy2.3}\\
- \int_{\partial \Omega} [\sigma(v,p)n + (V\cdot n) v ] d \gamma =\xi_f(\overline v_*^{\mathcal C})
 \label{ncy2.4}\\
- \int_{\partial \Omega} x\times [\sigma(v,p)n + (V\cdot n) v ] \ d \gamma =\omega_f(\overline v_*^{\mathcal C})
\label{ncy2.5}\\
 v_*^{\mathcal C}=\sum_{i=1}^3  \Big(\alpha_i \chi g^{(i)}+ \beta_i \chi G^{(i)} \Big) \in \mathcal{C}_\chi.
\label{ncy2.99}
\end{gather}
Moreover, we have
\begin{multline}\label{loc-est}
\sup_{x\in\Omega}[\varpi(x)|v(x)|]
+\|\nabla v\|_{1,2,\Omega} +  \| p\|_{1,2,\Omega}
+|(\alpha,\beta) |
\\
\leq C\big[|(\xi_f(\overline v_*^{\mathcal C}),\omega_f(\overline v_*^{\mathcal C}))|
+\|f(\overline{v})\|_{2,\Omega}
+\lceil F(\overline{v}) \rceil_{2,\varpi,\Omega}
+| (\xi,\omega)|+\|\vstar\|_{3/2,2,\partial\Omega}
+| (\xi,\omega)|(|(\xi,\omega)|+\|\vstar\|_{3/2,2,\partial\Omega})
\big].
\end{multline}
This allows us to define the mapping
$$
\mathcal{Z} : \mathcal X\ni (\overline v,\overline \alpha,\overline \beta) \mapsto ( v, \alpha, \beta)\in \mathcal X,
$$ 
where $(v,v_*^{\mathcal C})$ is the solution of \eqref{lis2.0}--\eqref{ncy2.99}.

Following the proof of \cite[Theorem 1.1]{HST}, based on \cite[Proposition 4.5]{HST}, we obtain
$$
\| \mathcal{Z}(\overline v,\overline \alpha,\overline \beta)\|_{\mathcal{X}} \leq C
\big(|(\xi,\omega)|+\|\vstar\|_{3/2,2,\partial\Omega}+|(\xi,\omega)|^2+\|\vstar\|_{3/2,2,\partial\Omega}^2\big)
+C\|(\overline v,\overline \alpha,\overline \beta)\|_{\mathcal{X}}^2.
$$
Taking $c_0 \in (0,1)$ small enough in  \eqref{new010}, we see that a ball
\begin{equation}
\mathcal{X}_{R_0}:=\left\{ (v,\alpha,\beta)\in \mathcal{X} \ ; \|(v,\alpha,\beta)\|_{\mathcal{X}} \leq  R_0 \right\}, \qquad
 R_0=C_1\big(|(\xi,\omega)|+\|\vstar\|_{3/2,2,\partial\Omega}\big)
\label{inv-ball}
\end{equation}
is invariant by $\mathcal{Z}$.
In a similar way, we also obtain that $\mathcal{Z}$ is a strict contraction on $\mathcal{X}_{R_0}$ for $c_0$ small enough. This gives us the existence of a solution to problem \eqref{ncy0.0}--\eqref{ncy0.51}. The corresponding estimates \eqref{main-est}--\eqref{main-est-add} follow from \eqref{loc-est}.

To prove that $v\in L^2(\Omega)$, we apply \cite[Theorem 1.2]{HST} where we only have to replace $v_*$ by $v_*+v_*^{\mathcal C}$ (see the definition (1.13) of $N$ in \cite{HST}).

In order to obtain the energy equation, 
for $R>0$ large enough such that ${\mathbb R}^3\setminus \Omega \subset B_R$, we use a radially symmetric cut-off function
$\psi_R(x)=\widetilde\psi(|x|/R)$ with $\widetilde\psi\in C_0^1((-2,2);[0,1])$ which fulfills
$\widetilde\psi\equiv 1$ on $[-1,1]$.  
Then we have
\begin{equation}
\|\nabla\psi_R\|_{q,\mathbb R^3}=CR^{-1+3/q} \qquad (3\leq q\leq\infty)
\label{cut-est}
\end{equation}
as well as $(\omega \times x )\cdot \nabla \psi_R=0$. 
From those properties it follows that
$$
\|V\cdot \nabla \psi_R\|_{\infty,\mathbb R^3} = \|\xi \cdot \nabla \psi_R\|_{\infty,\mathbb R^3}
= C/R.
$$
Taking the inner product of both sides of the equation \eqref{ncy0.0} with
$\psi_R v$ and integrating by parts over $\Omega$ yield
\begin{multline*}
\displaystyle - \int_{\partial\Omega}\big(\sigma(v,p)n\big)\cdot v \,d\gamma
+\int_{\Omega} \sigma(v,p) : (v\otimes\nabla \psi_R) \ dx
+  2 \int_{\Omega} |D(v)|^2 \psi_R   \ dx \medskip \\
\displaystyle + \int_{\partial \Omega} \frac{|v|^2}{2} (v-V)\cdot n \ d\gamma
 -\int_{\Omega} \frac{|v|^2}{2} (v-V)\cdot \nabla \psi_R \ dx = 0
\end{multline*}
Since $v\in W^{1,2}(\Omega)$ and $p \in L^2(\Omega)$, we have
$$
\left|-\int_{\Omega} \frac{|v|^2}{2} (v-V)\cdot \nabla \psi_R \ dx
+\int_{\Omega} \sigma(v,p) : (v\otimes\nabla \psi_R) \ dx\right|
\leq \frac{C}{R} \int_{\Omega}\left( |v|^3+|v|^2+|D(v)|^2+|p|^2 \right)\ dx.
$$
Using the dominated convergence Theorem and \eqref{new020}, we can pass to the limit $R\to \infty$ and we deduce the result.
\end{proof}

Using the energy equation \eqref{drag}, we can rewrite the drag functional \eqref{workdrag} in the following way:
\begin{corollary}
Let $(v,p,\vperp)$ be the solution of \eqref{ncy0.0}--\eqref{ncy0.51} obtained in Theorem \ref{Tmain}.
Then
\begin{equation*}
W(v,p)
=2  \|D(v)\|_{2,\Omega}^2
+\frac{1}{2}\int_{\partial\Omega}(v_*+v_*^{\mathcal C})\cdot n |V+v_*+v_*^{\mathcal C}|^2\,d\gamma.
\end{equation*}
\label{dragbyenergy}
\end{corollary}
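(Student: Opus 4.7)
The plan is essentially a one-line derivation: the corollary is an immediate consequence of the energy equation already established in Theorem \ref{Tmain}.

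First I would recall the definition of the drag functional \eqref{workdrag}, namely $W(v,p)=\int_{\partial\Omega} v\cdot(\sigma(v,p)n)\,d\gamma$. Since $(v,p,v_*^{\mathcal C})$ is the solution furnished by Theorem \ref{Tmain}, the boundary condition \eqref{ncy0.2} reads $v=V+v_*+v_*^{\mathcal C}$ on $\partial\Omega$, so that
\[
W(v,p)=\int_{\partial\Omega}\big(\sigma(v,p)n\big)\cdot v\,d\gamma.
\]
This is precisely the left-hand side of the energy identity \eqref{drag}. Invoking \eqref{drag} directly yields the claimed formula.

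Since the corollary reduces to a quotation of the identity \eqref{drag}, there is no genuine obstacle. The only thing one needs to double-check is that the hypotheses of Theorem \ref{Tmain} guarantee that the surface integrals in \eqref{workdrag} are well defined and that the energy equation is valid for the particular solution under consideration. This is already taken care of by the regularity $(v,p)\in W^{2,2}(\Omega)\times W^{1,2}(\Omega)$ and $v_*^{\mathcal C}\in\mathcal{C}_\tau$ (resp.\ $\mathcal{C}_\chi$) stated in \eqref{new020}, together with $\partial\Omega\in C^3$, which ensures that the trace of $\sigma(v,p)n$ belongs to $W^{1/2,2}(\partial\Omega)$ and pairs with $v|_{\partial\Omega}\in W^{3/2,2}(\partial\Omega)$.
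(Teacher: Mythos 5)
Your argument is correct and coincides with the paper's own (implicit) reasoning: the corollary is precisely the energy identity \eqref{drag} rewritten via the definition \eqref{workdrag}, and the paper states it without further proof after observing this. Your added remark on why the boundary pairing is well defined is a reasonable sanity check but not logically required beyond what Theorem \ref{Tmain} already provides.
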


From now on, we assume
\begin{equation}\label{icd0.0}
|\xi|\leq c_0, \qquad |\omega|\leq c_0
\end{equation}
and, given $\kappa\in (0,c_0]$, we define
\begin{equation}\label{icd0.1}
\mathcal{V}_\tau^{\kappa} :=\left\{\vstar \in \mathcal{V}_\tau \ ; \
\|\vstar\|_{W^{3/2,2}(\partial\Omega)}\leq \kappa \right\},
\end{equation}
\begin{equation}\label{icd0.2}
\mathcal{V}_{\Gamma}^{\kappa}:=\left\{\vstar \in \mathcal{V}_{\Gamma} \ ; \
\|\vstar\|_{W^{3/2,2}(\partial\Omega)}\leq \kappa\right\},
\end{equation}
where $\mathcal{V}_\tau$ and $\mathcal{V}_{\Gamma}$ are defined by \eqref{defesVtau} and \eqref{defesVchi}, while $c_0$ is the constant in Theorem \ref{Tmain}.

Using Corollary \ref{dragbyenergy}, we deduce that the problem \eqref{infdrag} reduces here to minimize
\begin{equation}\label{icd0.3}
J(v_*)
:=2  \|D(v)\|_{2,\Omega}^2
+\frac{1}{2}\int_{\partial\Omega}(v_*+v_*^{\mathcal C})\cdot n |V+v_*+v_*^{\mathcal C}|^2\,d\gamma,
\end{equation}
where $(v,p,\vperp)$  is the solution of \eqref{ncy0.0}--\eqref{ncy0.51} associated with either $v_*\in \mathcal{V}_\tau^{\kappa}$ or $v_*\in \mathcal{V}_{\Gamma}^{\kappa}$.
This functional $J$ is well-defined since Theorem \ref{Tmain} allows us to define the 
control-to-state mapping $v_* \mapsto (v,p,\vperp)$. In the 
 following sections, we study this mapping and consider the  
optimal control problems:
\begin{equation}\label{Ptaus}
\inf_{v_* \in \mathcal{V}_\tau^{\kappa}} J(v_*)
\end{equation}
or
\begin{equation}\label{Pchis}
\inf_{v_* \in \mathcal{V}_{\Gamma}^{\kappa}} J(v_*).
\end{equation}
Under the condition \eqref{icd0.0}, in the next section, it turns out that \eqref{Ptaus} and \eqref{Pchis} respectively admit solutions for every $\kappa \in (0,c_0]$, however, the radius $\kappa$ of the admissible closed balls \eqref{icd0.1}--\eqref{icd0.2} as well as $(\xi,\omega)$ should be taken still smaller (see Theorems \ref{zhz} and \ref{T1}) in order to characterize the optimal solution in Theorem \ref{main}.

\section{Existence of optimal controls}\label{sec_exi}
Here we show that problems \eqref{Ptaus} and \eqref{Pchis} have a solution, so that the infima are actually minima.
\begin{theorem}\label{Texi}
Assume that $\xi,\omega \in {\mathbb R}^3$ satisfy \eqref{icd0.0}. Let $\kappa\in (0,c_0]$. 
Then each of the optimal control problems \eqref{Ptaus} and \eqref{Pchis} admits a solution.
\end{theorem}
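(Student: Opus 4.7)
The strategy is the standard direct method of the calculus of variations; I focus on \eqref{Pchis}, the case \eqref{Ptaus} being analogous. Let $\{v_*^{(k)}\} \subset \mathcal{V}_{\Gamma}^{\kappa}$ be a minimizing sequence, with associated states $(v^{(k)}, p^{(k)}, v_*^{(k),\mathcal{C}})$ provided by Theorem \ref{Tmain}. Since $\mathcal{V}_{\Gamma}^{\kappa}$ is a closed ball in the reflexive Hilbert space $W^{3/2,2}(\partial\Omega)$, it is weakly sequentially compact; in addition, the embedding $W^{3/2,2}(\partial\Omega) \hookrightarrow C(\partial\Omega)$ is compact, since $\partial\Omega$ is a $2$-manifold. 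Passing to a subsequence, $v_*^{(k)} \rightharpoonup \overline{v_*}$ weakly in $W^{3/2,2}(\partial\Omega)$ and strongly in $C(\partial\Omega)$, and the convex closed set $\mathcal{V}_\Gamma^\kappa$ being weakly closed, $\overline{v_*} \in \mathcal{V}_{\Gamma}^{\kappa}$.

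Next I extract limits of the states. The uniform bounds \eqref{main-est}--\eqref{main-est-add} give $v^{(k)}$ bounded in $D^{1,2}(\Omega)\cap D^{2,2}(\Omega)$ with $\lceil v^{(k)} \rceil_{1,\varpi,\Omega}$ controlled, $p^{(k)}$ bounded in $W^{1,2}(\Omega)$, and $v_*^{(k),\mathcal{C}}$ lying in a bounded subset of the finite-dimensional space $\mathcal{C}_\chi$. Passing to a further subsequence, $v^{(k)} \rightharpoonup \overline v$, $p^{(k)} \rightharpoonup \overline p$ weakly, $v_*^{(k),\mathcal{C}} \to \overline{v_*^{\mathcal{C}}}$ strongly (finite dimension), and for every $R>\varrho$ the Rellich--Kondrachov theorem provides strong convergence $v^{(k)} \to \overline v$ in $W^{1,q}(\Omega_R)$ for any $q<\infty$ together with uniform convergence on compact subsets.

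I then pass to the limit in \eqref{ncy0.0}--\eqref{ncy0.51}. In the weak formulation \eqref{weakform} with $\varphi \in \mathcal{D}(\Omega)$, the linear terms converge by weak convergence and the nonlinear term $(v^{(k)}-V)\cdot\nabla v^{(k)}$ converges using uniform convergence of $v^{(k)}$ on $\supp\varphi$ together with weak $L^2$-convergence of $\nabla v^{(k)}$. The trace identity \eqref{ncy0.2} passes by strong boundary convergence of $v_*^{(k)}+v_*^{(k),\mathcal{C}}$ in $C(\partial\Omega)$. For the self-propulsion conditions \eqref{ncy0.4}--\eqref{ncy0.5}, the cubic boundary integrals pass to the limit because the strong convergence in $C(\partial\Omega)$ handles products of three boundary factors, whereas $\sigma(v^{(k)},p^{(k)})n$ converges in $W^{-1/2,2}(\partial\Omega)$ via continuity of the trace from the local strong convergence of $(v^{(k)},p^{(k)})$ in $W^{2,2}(\Omega_R)\times W^{1,2}(\Omega_R)$ on a neighborhood of $\partial\Omega$. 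Thus $(\overline v,\overline p,\overline{v_*^{\mathcal{C}}})$ solves \eqref{ncy0.0}--\eqref{ncy0.51} with control $\overline{v_*}$; moreover the limits inherit the bound \eqref{main-est} (by weak lower semicontinuity), so the uniqueness statement of Theorem \ref{Tmain} in the ball \eqref{inv-ball} confirms that this is precisely the state generated by $\overline{v_*}$.

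Finally, I verify weak lower semicontinuity of $J$. The viscous dissipation $2\|D(v)\|_{2,\Omega}^2$ is weakly lower semicontinuous because $D(v^{(k)}) \rightharpoonup D(\overline v)$ in $L^2(\Omega)$, and the boundary integral $\tfrac{1}{2}\int_{\partial\Omega}(v_*+v_*^{\mathcal{C}})\cdot n \, |V+v_*+v_*^{\mathcal{C}}|^2\,d\gamma$ is in fact continuous under the strong $C(\partial\Omega)$-convergence of $v_*^{(k)}+v_*^{(k),\mathcal{C}}$. Hence $J(\overline{v_*}) \leq \liminf_k J(v_*^{(k)}) = \inf J$, so $\overline{v_*}$ is a minimizer. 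The delicate point is precisely the passage to the limit in the cubic boundary integrals of \eqref{ncy0.4}--\eqref{ncy0.5} and of $J$ itself; this is absorbed by the fortunate fact that $W^{3/2,2}(\partial\Omega)$ embeds compactly into $C(\partial\Omega)$, giving more than enough room to pass products to the limit.
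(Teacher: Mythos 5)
Your proposal follows the same direct-method strategy as the paper: weak compactness of the admissible ball, a priori bounds from Theorem~\ref{Tmain}, extraction of weak limits, passage to the limit in the weak formulation and boundary conditions, identification of the limit as the state of $\overline{v_*}$ via the uniqueness clause, and weak lower semicontinuity of $J$. The only slip is the phrase ``local strong convergence of $(v^{(k)},p^{(k)})$ in $W^{2,2}(\Omega_R)\times W^{1,2}(\Omega_R)$'': the bounds only give \emph{weak} convergence in those spaces, with strong convergence in lower-order norms (e.g.\ $W^{1,q}(\Omega_R)\times L^q(\Omega_R)$ for $q<6$) via Rellich. The conclusion still holds, because weak convergence in $W^{2,2}(\Omega_R)\times W^{1,2}(\Omega_R)$ already gives weak convergence of $\sigma(v^{(k)},p^{(k)})n$ in $W^{1/2,2}(\partial\Omega)$, which suffices to pass to the limit when tested against the smooth fields $1$ and $x$; the paper's own argument instead uses the interpolation inequality $\|\cdot\|_{2,\partial\Omega}\leq C\|\cdot\|_{2,\Omega_R}^{1/2}\|\cdot\|_{1,2,\Omega_R}^{1/2}$ applied to $\nabla v_k-\nabla\widehat v$ and $p_k-\widehat p$ to upgrade local strong $L^2$-convergence plus $W^{1,2}$-boundedness to strong $L^2(\partial\Omega)$ trace convergence. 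Your use of the compact embedding $W^{3/2,2}(\partial\Omega)\hookrightarrow C(\partial\Omega)$ for the cubic boundary terms is a valid (slightly stronger) substitute for the paper's choice of $W^{3/2,2}(\partial\Omega)\hookrightarrow L^3(\partial\Omega)$.
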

\begin{proof} 
We only consider problem \eqref{Pchis}, the case of tangential controls can be treated with exactly the same arguments. By the embedding $W^{3/2,2}(\partial \Omega)\subset L^3(\partial \Omega)$ and
\eqref{icd0.2}, we have
$$
\|v_*\|_{L^3(\partial \Omega)} \leq C\kappa
$$
and from \eqref{main-est} it follows that
$$
\|v_*^{\mathcal C}\|_{L^3(\partial \Omega)} \leq C(\kappa +c_0).
$$
Therefore,
the functional $J$ is bounded from below on 
$\mathcal{V}_{\Gamma}^{\kappa}$ since
$$
J (v_*) \geq
- C \int_{\partial \Omega} \Big(|v_*+v_*^{\mathcal C}|^3+|v_*+v_*^{\mathcal C}|\Big) \ d\gamma.
$$
Thus, there exists a sequence
$$
\{ v_{*k} \}_{k\in {\mathbb N}}\subset \mathcal{V}_{\Gamma}^{ \kappa}  \text{ such that } 
J(v_{*k}) \to \inf_{v_* \in \mathcal{V}_{\Gamma}^{\kappa}} J(v_*) \quad (k \to \infty).
$$
We will denote by $\{(v_k,  p_k,v_{*k}^{\mathcal C})\}_{k\in {\mathbb N}}$ the corresponding sequence of states, that is,  $(v_k,  p_k,v_{*k}^{\mathcal C})$ is the solution of problem \eqref{ncy0.0}--\eqref{ncy0.51} for $v_{*k}$. Since the admissible set $\mathcal{V}_{\Gamma}^{\kappa}$ is weakly sequentially compact, there exist
$\widehat{v}_{*}\in \mathcal{V}_{\Gamma}^{\kappa}$ and a subsequence of $\{ v_{*k} \}_{k\in {\mathbb N}}$, still denoted by $\{ v_{*k} \}_{k\in {\mathbb N}}$,  such that
\begin{equation}\label{11:58}
{\vstar}_k \rightharpoonup \widehat{v}_*  \;\text{ weakly in }  W^{3/2,2}(\partial \Omega)
\end{equation}
 In what follows we take suitable subsequences in order although they are always denoted by the same symbol $\{v_k\}$ etc.
From \eqref{main-est}--\eqref{main-est-add}, we also deduce the existence of $(\widehat{v}, \widehat{p},\widehat{v}_*^{\mathcal C})$ such that
\begin{gather}
\displaystyle \big(\nabla  v_k,  p_k \big) \rightharpoonup 
	\big(\nabla \widehat{v}, \widehat{p} \big) \;\text{ weakly in } W^{1,2}(\Omega), \label{12:45}\\
v_{*k}^{\mathcal C} \to  \widehat v_*^{\mathcal C}  \;\text{ strongly in }  W^{3/2,2}(\partial \Omega). \label{11:56}
\end{gather}
Concerning \eqref{11:56}, what we see at once is that
${\mathcal C}_\tau$ (resp. ${\mathcal C}_\Gamma$) $\ni v_{*k}^{\mathcal C}$ tends to $\widehat v_*^{\mathcal C}$ weakly in $W^{3/2,2}(\partial\Omega)$ along a subsequence as $k\to\infty$, which yields $\widehat v_*^{\mathcal C}\in {\mathcal C}_\tau$ (resp. ${\mathcal C}_\Gamma$) since the subspace is weakly closed; then, we eventually obtain the strong convergence above because it is a finite dimensional space.
Since $v_k(x)$ tends to zero as $|x|\to\infty$, from a classical embedding inequality we also deduce  (see, for instance, \cite[Theorem II.6.1]{G})
$$
\|v_k\|_{6,\Omega} \leq C
$$
and thus
\begin{equation}
 v_k \rightharpoonup \widehat{v} \;\text{ weakly in } L^{6}(\Omega).
\label{convl6}
\end{equation}

We also have
$$
\varpi  v_k \rightharpoonup \varpi \widehat{v} \;\text{ weakly * in } L^{\infty}(\Omega).
$$
Indeed, there exists $\vartheta \in L^\infty(\Omega)$ such that
$$
\int_{\Omega} \varpi(x)  v_k(x) u(x)dx \to \int_{\Omega} \vartheta (x) u(x)dx, \quad \forall u \in L^1(\Omega).
$$
and, in particular, 
\begin{equation}
\int_{\Omega} \varpi(x)  v_k(x)  \varphi(x)dx \to \int_{\Omega} \vartheta (x)  \varphi(x)dx, \quad \forall \varphi \in C_0^\infty(\overline\Omega).
\label{convl1}
\end{equation}
On the other hand, due to \eqref{convl6}, 
\begin{equation}
\int_{\Omega} \varpi(x)  v_k(x) \varphi(x)dx \to \int_{\Omega} \varpi(x)  v(x) \varphi(x)dx, \quad \forall \varphi \in C_0^\infty(\overline\Omega).
\label{convd}
\end{equation}
From \eqref{convl1} and \eqref{convd} it follows that
$$
\int_{\Omega} \vartheta (x)  \varphi(x)dx = \int_{\Omega} \varpi(x)  v(x) \varphi(x)dx, \quad \forall \varphi \in C_0^\infty(\overline\Omega)
$$
and therefore $\vartheta = \varpi v$ a.e. in $\Omega$.

Finally, using classical compactness results, we also deduce from \eqref{main-est} that
$$
(\nabla v_k, p_k)\to (\nabla \widehat{v},\widehat{p})  \;\text{ strongly in } \ L^2_{loc}(\overline{\Omega}),
$$
$$
\displaystyle v_k \to \widehat{v} \;\text{ strongly in } \ L^2_{loc}(\overline{\Omega}),
$$
and
\begin{equation}\label{12:30}
v_{*k} \to \widehat v_* \;\text{ strongly in } \ L^3(\partial \Omega).
\end{equation}
Using the above convergences, we can pass to the limit in \eqref{weakform} where $v$ is replaced by $v_k$.
We also know that $\widehat v$ satisfies \eqref{ncy0.1} and \eqref{ncy0.3}. We have
$$
\|v_k-\widehat{v}\|_{2,\partial\Omega}
\leq C\|v_k-\widehat{v}\|_{2,\Omega_R}^{1/2}\|v_k-\widehat{v} \|_{1,2,\Omega_R}^{1/2}
\to 0\quad (k\to \infty), 
$$
and using \eqref{11:56} and \eqref{11:58}, we deduce \eqref{ncy0.2}.

Similarly,
\begin{multline*}
\|\sigma(v_k,p_k)n -\sigma(\widehat{v},\widehat{p} )n\|_{2,\partial\Omega} \\
\leq C\|\nabla v_k -\nabla \widehat{v} \|_{2,\Omega_R}^{1/2}\|\nabla v_k - \nabla \widehat{v} \|_{1,2,\Omega_R}^{1/2}
	+C\|p_k - \widehat{p} \|_{2,\Omega_R}^{1/2}\|p_k - \widehat{p} \|_{1,2,\Omega_R}^{1/2}
\to 0 \quad (k \to \infty),
\end{multline*}
so that, using \eqref{12:30} and \eqref{11:56}, we deduce \eqref{ncy0.4} and \eqref{ncy0.5}.

The pair $(\widehat v,\widehat v_*^{\mathcal C})$ is in the class 
 satisfying \eqref{main-est} and thus, by the assumption \eqref{icd0.0} and the uniqueness result in Theorem \ref{Tmain}, $(\widehat v,\widehat p,\widehat v_*^{\mathcal C})$ is the state associated with $\widehat v_*$. 
Now using \eqref{12:45}, \eqref{12:30} and \eqref{11:56} we deduce
$$
J(\widehat v_*) \leq \liminf_{k\to \infty} J(v_{*k})
$$
and therefore $\widehat v_*$ is actually a minimizer:
$$
J(\widehat v_*)=  \inf_{v_* \in \mathcal{V}_{\Gamma}^{\kappa}} J(v_*).
$$
\end{proof}

\section{Regularity of the control-to-state mapping}\label{sec_reg}
Let 
$$
\mathcal{W}:=\left\{ (v,v_*^{\mathcal C}) \ ;\
v \in D^{1,2}(\Omega)\cap L^\infty_{1,\varpi}(\Omega),\,\nabla \cdot v = 0 \mbox{ in }\Omega,\,
 v_*^{\mathcal C} \in W^{3/2,2}(\partial\Omega) \right\}
$$
endowed with the norm
$$
\| (v,v_*^{\mathcal C}) \|_{\mathcal{W}} 
:=|v|_{1,2,\Omega} +  \lceil v \rceil_{1,\varpi,\Omega} 
+\|v_*^{\mathcal C}\|_{3/2,2,\partial\Omega}
= \|\nabla v\|_{2,\Omega} + \lceil v\rceil_{1,\varpi,\Omega}
 +\|v_*^{\mathcal C}\|_{3/2,2,\partial\Omega},
$$
and consider the subset
\begin{equation}
\mathcal{W}_{R_0}:=\left\{
(v,v_*^{\mathcal C})
\in \mathcal{W} \ ; \ \| (v,v_*^{\mathcal C}) \|_{\mathcal{W}} 
\leq R_0\right\},
\label{WR0}
\end{equation}
where $R_0$ is the same as in \eqref{inv-ball}, that is the right-hand side of \eqref{main-est}.
 Suppose \eqref{icd0.0}. Given $\kappa\in (0,c_0]$,
Theorem \ref{Tmain} allows us to define the following control-to-state
mappings (recall \eqref{icd0.1} and \eqref{icd0.2})
\begin{equation}\label{lis1.2}
\Lambda_\tau : \mathcal{V}_\tau^{\kappa} \to 
\mathcal{W}_{R_0}\cap\big[(D^{2,2}(\Omega)\cap L^2(\Omega))\times \mathcal{C}_\tau\big],
\quad
v_* \mapsto (v,v_*^{\mathcal C})
\end{equation}
\begin{equation}\label{lis1.2chi}
\Lambda_\chi : \mathcal{V}_{\Gamma}^{\kappa} \to 
 \mathcal{W}_{R_0}\cap\big[(D^{2,2}(\Omega)\cap L^2(\Omega))\times \mathcal{C}_\chi\big],
\quad
v_* \mapsto (v,v_*^{\mathcal C})
\end{equation}
where 
 $(v,\vperp)$ together with the pressure $p\in W^{1,2}(\Omega)$
is the solution of problem \eqref{ncy0.0}--\eqref{ncy0.51} associated with $v_*$ given in the admissible set 
$\mathcal{V}_\tau^{\kappa}$ or
$\mathcal{V}_{\Gamma}^{\kappa}$.

Our aim is to show that the maps $\Lambda_\tau$ and $\Lambda_\chi$ are G\^ateaux differentiable. We analyze this problem in detail for $\Lambda_\tau$, the idea being similar for the mapping $\Lambda_\chi$. 

In order to compute the G\^ateaux derivative of $\Lambda_\tau$ at $v_*\in \mathcal{V}_\tau^{\kappa}$ in the direction $\overline{v_*} \in {\mathcal V}_\tau$, that is denoted by
$$
D \Lambda_\tau (v_*) {\overline{v_*}} = (z,z_*^{\mathcal C}),
$$
we suppose that $v_*+h \overline{v_*}$ is also in $\mathcal{V}_\tau^{\kappa}$ for $0 < h < h_0$
 with sufficiently small $h_0$; in fact, this is accomplished as long as 
$\langle v_*,\overline{v_*}\rangle_{W^{3/2,2}(\partial\Omega)}<0$
even if $\|v_*\|_{3/2,2,\partial\Omega}=\kappa$.

We consider 
$$
(v_h, v_{h*}^{\mathcal C},p_h)\in 
\Big(\mathcal{W}_{R_0}\cap\big[(D^{2,2}(\Omega)\cap L^2(\Omega))\times \mathcal{C}_\tau\big]\Big)
\times W^{1,2}(\Omega)
$$
which is the solution to \eqref{ncy0.0}--\eqref{ncy0.51} associated with $v_*+h \overline{v_*}$,
 that is, $(v_h, v_{h*}^{\mathcal C}):=\Lambda_\tau(v_*+h\overline{v_*})$.
Then 
$$ 
\begin{array}{rcl}
 (z_h,z_{h*}^{\mathcal C},r_h)  &:= &\displaystyle \frac{(v_h,v_{h*}^{\mathcal C},p_h) - (v,v_*^{\mathcal C},p)}{h} \medskip \\
& = & \displaystyle \frac{(\Lambda_\tau (v_*+h \overline{v_*}),p_h) - (\Lambda_\tau (v_*),p)}{h}\in 
\Big(\mathcal{W}\cap\big[(D^{2,2}(\Omega)\cap L^2(\Omega))\times \mathcal{C}_\tau\big]\Big)
\times W^{1,2}(\Omega)
\end{array}
$$ 
satisfies the following system
\begin{gather}
-\div \sigma(z_h,r_h) + v \cdot \nabla z_h + z_h \cdot \nabla v_h  -  V \cdot \nabla z_h + \omega\times z_h =
0 \quad \text{in} \ \Omega \label{zhum}\\
\div z_h =0 \quad \text{in} \ \Omega \label{zh2} \\
z_h =   z_{h*}^{\mathcal C} + \overline{v_*} \quad \text{on}\ \partial \Omega \label{zh3}\\
\lim_{|x|\to \infty} z_h(x) = 0  \label{zh4}\\
\int_{\partial \Omega} \sigma(z_h,r_h)n \ d \gamma = 0 \label{zh5}
\\
\int_{\partial \Omega} x\times \sigma(z_h,r_h)n \ d \gamma 
 =  0 , \label{zh6}
 \\
z_{h*}^{\mathcal C}\in \mathcal{C}_\tau. \label{zulth}
\end{gather}
Our aim is to show that, when $h \to 0$, $(z_h,z_{h*}^{\mathcal C},r_h)$ converges to $(z,z_{*}^{\mathcal C},r)$, which solves the linearized state equations 
\begin{gather}
-\div \sigma(z,r)  + v \cdot \nabla z + z \cdot \nabla v -  V \cdot \nabla z + \omega\times z = 0 \quad \text{in} \ \Omega \label{11:17}\\
\div z =0 \quad \text{in} \ \Omega \\
z =   z_*^{\mathcal C} + \overline{v_*} \quad \text{on}\ \partial \Omega \\
\lim_{|x|\to \infty} z(x) = 0  \label{15:15}\\
\int_{\partial \Omega} \sigma(z,r)n \ d \gamma =0 \label{17:38} \\
 \int_{\partial \Omega} x\times \sigma(z,r)n \ d \gamma =0
\\
z_{*}^{\mathcal C}\in \mathcal{C}_\tau. \label{11:18}
\end{gather}
In the case of localized controls, instead of the equations \eqref{zh5}--\eqref{zulth}, we have
\begin{gather}
\int_{\partial\Omega} 
[- \sigma(z_h,r_h) + (v_* + v_*^{\mathcal C})\cdot n  ( \overline{v_*} + z_{h*}^{\mathcal C} ) + ( \overline{v_*} + z_{h*}^{\mathcal C} )\cdot n (v_*+h\overline{v_*} + v_{h*}^{\mathcal C} + V + \omega\times x)]\,d\gamma = 0
\label{zh5loc}
\\
\int_{\partial\Omega} x \times 
[- \sigma(z_h,r_h) + (v_* + v_*^{\mathcal C})\cdot n  ( \overline{v_*} + z_{h*}^{\mathcal C} ) + ( \overline{v_*} + z_{h*}^{\mathcal C} )\cdot n (v_*+h\overline{v_*} + v_{h*}^{\mathcal C} + V + \omega\times x)]\,d\gamma = 0 \label{zh6loc} \\
 z_{h*}^{\mathcal C}\in \mathcal{C}_{\chi} \label{zulthloc}
 \end{gather}
 and in the corresponding linearized state equations conditions \eqref{17:38}--\eqref{11:18} are replaced by
\begin{gather}
\int_{\partial \Omega} \left[-\sigma(z,r)n + (v_*+v_*^{\mathcal C})\cdot n  (\overline v_* +z_*^{\mathcal C}) +  (\overline{v_*}+z_{*}^{\mathcal C})\cdot n (v_* +v_*^{\mathcal C}+ V + \omega \times x)
  \right] \ d \gamma =0
\label{ncy0.4-old}\\
\int_{\partial \Omega} x\times \left[-\sigma(z,r)n + (v_*+v_*^{\mathcal C})\cdot n  (\overline v_* +z_*^{\mathcal C}) +  (\overline{v_*}+z_{*}^{\mathcal C})\cdot n (v_* +v_*^{\mathcal C}+ V + \omega \times x)
  \right] \ d \gamma =0
\label{ncy0.5-old}
\\
z_{*}^{\mathcal C}\in {\mathcal C}_\chi.
\label{ncy0.51-old}
\end{gather}
More specifically, we have
\begin{theorem}
There exists a constant $\kappa_1\in (0,c_0]$ 
depending on $\Omega$
such that if
$\xi, \omega\in \mathbb R^3$ and $v_*\in {\mathcal V}_\tau$ (resp. $v_*\in {\mathcal V}_\Gamma$) satisfy
\begin{equation}
|\xi|\leq\kappa_1, \qquad |\omega|\leq\kappa_1, \qquad
\|v_*\|_{3/2,2,\partial\Omega}\leq\kappa_1,
\label{small-1}
\end{equation}
then the following assertion holds, where $c_0$ is the constant in Theorem \ref{Tmain}:

Let $(v, v_*^{\mathcal C},p)$ be the solution to the state equations \eqref{ncy0.0}--\eqref{ncy0.51} associated with $v_*\in \mathcal{V}_\tau^{\kappa_1}$ (resp. $v_*\in \mathcal{V}_\Gamma^{\kappa_1}$) obtained in Theorem \ref{Tmain}. Then the mapping $\Lambda_\tau$ (resp. $\Lambda_\chi$)
defined by \eqref{lis1.2} (resp. \eqref{lis1.2chi}) with $\kappa=\kappa_1$
is G\^ateaux differentiable with values in
$\big[W^{2,2}(\Omega)\times L^\infty_{1,\varpi}(\Omega)\big]\times W^{3/2,2}(\partial\Omega)$
at $v_*\in \mathcal{V}_\tau^{\kappa_1}$ (resp. $v_*\in \mathcal{V}_\Gamma^{\kappa_1}$) in the direction $\overline{v_*}\in {\mathcal V}_\tau$ (resp. $\overline{v_*}\in {\mathcal V}_\Gamma$), where $\overline{v_*}$ must be taken such that $\langle v_*,\overline{v_*}\rangle_{W^{3/2,2}(\partial\Omega)} <0$ if $\|v_*\|_{3/2,2,\partial\Omega}=\kappa_1$, whereas it can be arbitrary if $\|v_*\|_{3/2,2,\partial\Omega}<\kappa_1$, and its derivative is given by $D \Lambda_\tau (v_*) {\overline{v_*}} = (z,z_*^{\mathcal C})$, 
(resp. $D \Lambda_\chi (v_*) {\overline{v_*}} = (z,z_*^{\mathcal C})$)
with
$(z,z_*^{\mathcal C},r)$ being the solution to the problem \eqref{11:17}--\eqref{11:18}
(resp. \eqref{11:17}--\eqref{15:15}, \eqref{ncy0.4-old}--\eqref{ncy0.51-old}).
Namely, we have 
$$\| z_h - z \|_{2,2,\Omega} + \lceil z_h - z \rceil_{1,\varpi,\Omega} 
 +\|z_{h*}^{\mathcal C}-z_*^{\mathcal C}\|_{3/2,2,\partial\Omega}
\to 0,$$
where $z_h$ satisfies \eqref{zhum}--\eqref{zulth} (resp. \eqref{zhum}--\eqref{zh4}, \eqref{zh5loc}--\eqref{zulthloc}). 
\label{zhz}
\end{theorem}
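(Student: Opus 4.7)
The plan is to proceed in three steps: (i) establish unique solvability of the linearized system \eqref{11:17}--\eqref{11:18} (resp.\ the localized analogue) with quantitative estimates in the target regularity class; (ii) upgrade Theorem \ref{Tmain} to Lipschitz continuity of the control-to-state map; and (iii) analyze the system solved by $(z_h-z,z_{h*}^{\mathcal C}-z_*^{\mathcal C},r_h-r)$ and pass to the limit $h\to 0^+$ using (i) and (ii).

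For (i), I would mimic the fixed-point argument of Theorem \ref{Tmain} applied to the linear problem \eqref{11:17}--\eqref{11:18}. Given $(\overline z,\overline\alpha,\overline\beta)\in\mathcal X$, set $\overline z_*^{\mathcal C}:=\sum_i\bigl(\overline\alpha_i(g^{(i)}\times n)\times n+\overline\beta_i(G^{(i)}\times n)\times n\bigr)$ and, exploiting $\div v=\div\overline z=0$, rewrite the frozen convective term as a divergence: $-v\cdot\nabla\overline z-\overline z\cdot\nabla v=\div F_{\overline z}$ with $F_{\overline z}:=-v\otimes\overline z-\overline z\otimes v$. The resulting linear problem then falls under the template of \eqref{113xi}--\eqref{133xi}, with $(\overline\alpha,\overline\beta)$ tuned through \cite[Proposition 4.5]{HST} so that the linearized self-propelling condition \eqref{17:38} is met. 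The anisotropic pointwise estimate \cite[Proposition 2.1]{HST}, the $W^{2,2}$-regularity of Proposition \ref{P23} and the $L^2$-bound of Proposition \ref{L2estimate} (applicable because the linearized self-propelling condition yields $N=0$, resp.\ $\omega\cdot N=0$, for the associated net force) provide all a priori estimates. Combined with $R_0\leq C\kappa_1$ from \eqref{main-est}, this delivers a strict contraction on $\mathcal X_{R_0}$ provided $\kappa_1$ is small enough depending on $\Omega$. The same contraction scheme applied to the nonlinear state equations with boundary data $v_*$ and $v_*+h\overline{v_*}$ yields Step (ii):
\begin{equation*}
\|v_h-v\|_{\mathcal W}+\|v_h-v\|_{2,2,\Omega}+\|p_h-p\|_{1,2,\Omega}+\|v_{h*}^{\mathcal C}-v_*^{\mathcal C}\|_{3/2,2,\partial\Omega}\leq Ch\|\overline{v_*}\|_{3/2,2,\partial\Omega}.
\end{equation*}

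For (iii), subtracting \eqref{11:17}--\eqref{11:18} from \eqref{zhum}--\eqref{zulth} and using the identity $z_h\cdot\nabla v_h-z\cdot\nabla v=w_h\cdot\nabla v_h+z\cdot\nabla(v_h-v)$, the triple $(w_h,w_{h*}^{\mathcal C},s_h):=(z_h-z,z_{h*}^{\mathcal C}-z_*^{\mathcal C},r_h-r)$ satisfies
\begin{equation*}
-\div\sigma(w_h,s_h)+v\cdot\nabla w_h+w_h\cdot\nabla v_h-V\cdot\nabla w_h+\omega\times w_h=\div\bigl(-z\otimes(v_h-v)\bigr)\quad\text{in }\Omega,
\end{equation*}
supplemented by $\div w_h=0$, $w_h|_{\partial\Omega}=w_{h*}^{\mathcal C}$, $w_h\to 0$ at infinity, and the linearized self-propelling conditions on $(w_h,s_h)$ with a right-hand side that is $O(h)$ by Step (ii) (in the tangential case this right-hand side is in fact zero; in the localized case \eqref{ncy0.4-old}--\eqref{ncy0.51-old} only bilinear boundary terms in $v_{h*}^{\mathcal C}-v_*^{\mathcal C}$ and $h\overline{v_*}$ remain). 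Since $z\in L^\infty_{1,\varpi}(\Omega)$ together with $\varpi(x)\geq 1+\bigl|x-\omega\times\xi/|\omega|^2\bigr|$ implies $(1+|x|)|z(x)|\leq C$, we obtain $\|z\otimes(v_h-v)\|_{2,\Omega}+\||x|z\otimes(v_h-v)\|_{2,\Omega}\leq C\|v_h-v\|_{2,\Omega}=O(h)$ by Step (ii). Applying the linear estimate from Step (i) to this system yields
\begin{equation*}
\|w_h\|_{2,2,\Omega}+\lceil w_h\rceil_{1,\varpi,\Omega}+\|w_{h*}^{\mathcal C}\|_{3/2,2,\partial\Omega}\leq Ch,
\end{equation*}
which is the claim.

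The main obstacle lies in Step (i), namely verifying that the linearized self-propelling conditions enforce $\omega\cdot N=0$ (or $N=0$ when $\omega=0$) for the linear problem so that Proposition \ref{L2estimate} can be invoked. In the localized case the identities \eqref{ncy0.4-old}--\eqref{ncy0.51-old} produce bilinear boundary terms that do not vanish structurally and must be absorbed by means of the smallness $\kappa_1$ in \eqref{small-1}; this is also where the argument for the difference system in Step (iii) becomes most delicate, as the self-propelling corrector must be picked to restore zero net force and torque at each step of the fixed-point iteration.
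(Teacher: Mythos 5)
The proposal is essentially correct and rests on the same two pillars as the paper's proof: (a) the $W^{2,2}$ and pointwise estimates for the generalized Oseen system with divergence-form forcing (\cite[Proposition 4.5]{HST}) and (b) the $L^2$-estimate of Proposition \ref{L2estimate}, applied after verifying the vanishing net force condition. The reorganization, however, differs from the paper in two places. First, you prove unique solvability of the linearized system \eqref{11:17}--\eqref{11:18} as a separate Step (i); the paper never does this directly and instead obtains $(z,r,z_*^{\mathcal C})$ as a weak limit of $(z_h,r_h,z_{h*}^{\mathcal C})$ after establishing uniform boundedness, then identifies the limit as a solution. Both routes work; the paper's is a bit more economical, yours is more modular. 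Second, in Step (iii) you keep $v\cdot\nabla w_h+w_h\cdot\nabla v_h$ on the left as part of the operator (with the extraneous coefficient $v_h$ instead of $v$), whereas the paper moves all bilinear terms to the right and writes $g_h=\div\big[-(z_h-z)\otimes v - v\otimes(z_h-z)-h\,z_h\otimes z_h\big]$, so that it only ever inverts the \emph{linear} generalized Oseen operator. To apply your Step (i) estimate as stated you would need either to decompose $w_h\cdot\nabla v_h=w_h\cdot\nabla v+h\,w_h\cdot\nabla z_h$ and absorb the last term, or to verify that the Step (i) constant depends on the frozen coefficient only through its $\mathcal W$-norm; a sentence to that effect should be added.

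Two smaller remarks. Your Step (ii) (Lipschitz continuity in $W^{2,2}\cap L^\infty_{1,\varpi}$ norms) is equivalent to the paper's uniform boundedness of $(z_h,r_h,z_{h*}^{\mathcal C})$ and requires the same verification of the net force condition $N_h=0$ via \eqref{zh5}/\eqref{zh5loc}, so it is not really a separate ingredient. Finally, your flagged "obstacle" in Step (i) is not one: when the corrector $z_*^{\mathcal C}$ is chosen so that \eqref{17:38} (tangential) or \eqref{ncy0.4-old} (localized) holds, the net force $N_{z,r}$ in \eqref{force-z} with $F=-(z\otimes v+v\otimes z)$ vanishes \emph{identically} (not merely up to small bilinear remainders), as one sees by expanding $[\sigma(z,r)+z\otimes V-(\omega\times x)\otimes z+F]\,n$ on $\partial\Omega$ and using the boundary values of $v$ and $z$; similarly $M_h=N_h-N=0$ for the difference system, exactly as in the paper's display \eqref{force-diff}. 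The smallness of $\kappa_1$ is needed only for the invertibility of the $6\times6$ corrector matrix (perturbed by the bilinear boundary terms in the localized case) and for the contraction constants — precisely what the paper uses it for.
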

\begin{proof}
We write a detailed proof for $\Lambda_\tau$ but point out the main differences for the mapping $\Lambda_\chi$.
Equation \eqref{zhum} can be written in the form 
$$
-\div \sigma(z_h,r_h)  -  V \cdot \nabla z_h + \omega\times z_h = f_h
$$
 with 
$$
 f_h : = - (v \cdot \nabla z_h + z_h \cdot \nabla v_h) = - \div\left(z_h\otimes v + v_h \otimes z_h\right) = \div F_h,
$$
$$
F_h := - (z_h\otimes v + v_h \otimes z_h).
$$
If $v,v_h \in {\mathcal W}_{R_0}$, then the following estimates hold for $f_h$ and $F_h$:
$$
\| f_h \|_{2,\Omega}
\leq C(\lceil v\rceil_{1,\varpi,\Omega}  \|  \nabla z_h \|_{2,\Omega} + \lceil z_h \rceil_{1,\varpi,\Omega} \| \nabla v_h \|_{2,\Omega})
\leq C R_0 (|z_h|_{1,2,\Omega} + \lceil z_h \rceil_{1,\varpi,\Omega}),
$$
$$
\lceil F_h \rceil_{2,\varpi,\Omega} = \lceil z_h\otimes v + v_h \otimes z_h \rceil_{2,\varpi,\Omega}
\leq C R_0   \lceil z_h \rceil_{1,\varpi,\Omega},
$$
where $R_0$ is given by \eqref{inv-ball}, see \eqref{WR0}.   
In order to apply \cite[Proposition 4.5]{HST}, 
which is still valid even though $V$ is replaced by $\overline{v_*}$,
we rewrite the conditions \eqref{zh5} and \eqref{zh6} as
\begin{gather}
\int_{\partial \Omega} \left[\sigma(z_h,r_h)n +(V\cdot n)z_h\right] \ d \gamma = \int_{\partial \Omega} (V\cdot n)z_h \ d \gamma \label{zhultnew1}
\\
\int_{\partial \Omega} x\times \left[\sigma(z_h,r_h)n + (V\cdot n)z_h \right] \ d \gamma 
 =  \int_{\partial \Omega} x\times \left[(V\cdot n)z_h \right] \ d \gamma,
  \label{zhultnew2}
\end{gather}
to find  
\begin{multline*}
|z_h|_{1,2,\Omega} +  |z_h|_{2,2,\Omega} + \lceil z_h \rceil_{1,\varpi,\Omega}
+\|r_h\|_{1,2,\Omega}
+\|z_{h*}^{\mathcal C}\|_{3/2,2,\partial\Omega}
\\
\leq C \left[R_0(|z_h|_{1,2,\Omega} + \lceil z_h \rceil_{1,\varpi,\Omega}) + \|\overline{v_*}\|_{3/2,2,\partial \Omega}
+(|\xi|+|\omega| )(\|\overline{v_*}\|_{3/2,2,\partial \Omega}+\|z_{h*}^{\mathcal C}\|_{3/2,2,\partial\Omega})\right].
\end{multline*}
For $R_0$ and $(\xi,\omega)$ small enough, we deduce that $(z_h,r_h,z_{h*}^{\mathcal C})$ is uniformly bounded 
in 
\[
\left[D^{1,2}(\Omega) \cap D^{2,2}(\Omega)\cap L^\infty_{1,\varpi}(\Omega)\right] \times W^{1,2}(\Omega)\times W^{3/2,2}(\partial\Omega).
\]

Now using \eqref{L2-est2} or \eqref{L2-est3}, depending on $\omega=0$ or $\omega\not=0$, we get
$$
\|z_h\|_{2,\Omega}
\leq C
\big[
(1+|\xi|)
\big(\|\nabla z_h \|_{2,\Omega}+|\Phi_h|\big)+\|r_h\|_{2,\Omega}+\|F_h\|_{2,\Omega}
\big]
+C'\||x|F_h\|_{2,\Omega}
$$
or
$$
\begin{array}{rcl}
\|z_h\|_{2,\Omega} & \leq  &\displaystyle  C\left(
|\omega|^{-1/4}+\frac{|\omega\cdot\xi|^{1/2}}{|\omega|}
\right)
\big(|N_h|+|\omega||\Phi_h|\big)
+C'\||x|F_h\|_{2,\Omega}  \\
&&\displaystyle +C\left(
1+\frac{|\omega\times\xi|}{|\omega|^2}
\right)
\left[
(1+|\xi|+|\omega|)
\big(\|\nabla z_h\|_{2,\Omega}+|\Phi_h|\big)+\|r_h\|_{2,\Omega}+\|F_h\|_{2,\Omega}
\right]\end{array}
$$
where
\begin{equation}
\begin{split}
N_h
&:=\int_{\partial\Omega}
[\sigma(z_h,r_h)+z_h\otimes V-(\omega\times x)\otimes z_h+F_h]\,n\,d\gamma \medskip \\
&=\int_{\partial\Omega} 
[\sigma(z_h,r_h) n + z_h (V-v)\cdot n -(v_h + \omega\times x) z_h\cdot n]\,d\gamma \medskip \\
&=\int_{\partial\Omega} 
[\sigma(z_h,r_h)n - (v_* + v_*^{\mathcal C})\cdot n  ( \overline{v_*} + z_{h*}^{\mathcal C} ) - ( \overline{v_*} + z_{h*}^{\mathcal C} )\cdot n (v_*+h\overline{v_*} + v_{h*}^{\mathcal C} + V + \omega\times x)]\,d\gamma =0 
\end{split}
\label{force-zh}
\end{equation}
on account of the conditions \eqref{zh5} for tangential controls (then it is $(v_* + v_*^{\mathcal C})\cdot n=( \overline{v_*} + z_{h*}^{\mathcal C} )\cdot n=0$) or \eqref{zh5loc} if we are considering localized controls; in fact, since $v$ and $v_h$ satisfy \eqref{903}, we should have $N_h=0$ for $z_h$, no matter which kind of controls we would adopt. Moreover,
$$
 \Phi_h  :=  \int_{\partial\Omega}n\cdot z_h\,d\gamma = \int_{\partial\Omega}n\cdot (z_{h*}^{\mathcal C} + \overline{v_*})\,d\gamma 
$$
which is zero if $z_{h*}^{\mathcal C} \in {\mathcal C}_\tau$, $\overline{v_*} \in {\mathcal V}_\tau$. For localized controls, we use the estimate
$$|\Phi_h  | = \left| \int_{\partial\Omega}n\cdot (z_{h*}^{\mathcal C} + \overline{v_*})\,d\gamma \right| \leq C ( \|z_{h*}^{\mathcal C}\|_{3/2,2,\partial\Omega} + \|\overline{v_*}\|_{3/2,2,\partial \Omega} ).$$
Now we can use the estimate
$$
\| (1+|x|)F_h\|_{2,\Omega} = \| (1+|x|)(z_h\otimes v +v\otimes z_h+hz_h\otimes z_h \|_{2,\Omega}
\leq C \lceil z_h \rceil_{1,\varpi,\Omega} (\|v\|_{2,\Omega}+ h \|z_h\|_{2,\Omega})
$$
together with the uniform boundedness of $z_h$, $r_h$ and $z_{h*}^{\mathcal C}$
in $D^{1,2}(\Omega) \cap D^{2,2}(\Omega)\cap L^\infty_{1,\varpi}(\Omega)$,
$W^{1,2}(\Omega)$ and $W^{3/2,2}(\partial\Omega)$, respectively, to conclude that $z_h$ is uniformly bounded in $L^2(\Omega)$ when $h$ is close to zero.

Therefore, there exists
\[
(z,r,z_{*}^{\mathcal C})
\in \big[W^{2,2}(\Omega)\cap L^\infty_{1,\varpi}(\Omega)\big]\times W^{1,2}(\Omega)\times {\mathcal C}_\tau
\]
such that
\begin{gather}
\displaystyle \varpi  z_h \rightharpoonup \varpi z \quad \text{weakly * in} \ L^{\infty}(\Omega),\\
\displaystyle \big( z_h,  r_h \big) \rightharpoonup 
	\big( z, r \big) \quad \text{weakly in} \ W^{2,2}(\Omega) \times W^{1,2}(\Omega), \\
	z_{h*}^{\mathcal C} \to z_{*}^{\mathcal C}  \quad \text{ strongly in } W^{3/2,2}(\partial\Omega),
	\label{10:57}
\end{gather}
along a subsequence as $h \to 0$, where the strong convergence \eqref{10:57} follows from the same reasoning as in \eqref{11:56}.

By classical compactness results, we also have
$$
\displaystyle z_h \to z \quad \text{strongly in} \ W^{1,2}_{loc}(\overline{\Omega}),
$$
and since $v_h=v+h z_h$,
$$
\displaystyle v_h \to v \quad \text{strongly in} \ W^{1,2}_{loc}(\overline{\Omega}).
$$
Proceeding as in the proof in Theorem \ref{Texi}, we can pass to the limit $h\to 0$ in \eqref{zhum}-\eqref{zulth} and show that $(z,r,z_*^{\mathcal C})$ satisfies 
\eqref{11:17}--\eqref{11:18}.

Now we prove the convergence of $z_h-z$ in the norm $\|\cdot\|_{2,2,\Omega} +  \lceil \cdot \rceil_{1,\varpi,\Omega} $. Consider the problem
\begin{gather*}
-\div \sigma(z_h-z,r_h-r) -  V \cdot \nabla (z_h-z) + \omega\times (z_h-z) 
\\
\hspace{5cm}=
-\left[(v \cdot \nabla z_h + z_h \cdot \nabla v_h) - (v \cdot \nabla z + z \cdot \nabla v)  \right] 
=:g_h\quad \text{in} \ \Omega \label{zh1}\\
\div (z_h -z) =0 \quad \text{in} \ \Omega \\
z_h - z =   z_{h*}^{\mathcal C} - z_{*}^{\mathcal C} \quad \text{on}\ \partial \Omega \\
\lim_{|x|\to \infty}  (z_h(x) -z(x)) = 0\\
\int_{\partial \Omega} \sigma(z_h-z,r_h-r)n d \gamma = 0
\\
\int_{\partial \Omega} x\times \sigma(z_h-z,r_h-r)n \ d \gamma 
 =  0,
 \\
z_{h*}^{\mathcal C}-z_*^{\mathcal C}\in \mathcal{C}_\tau. \label{zhult}
\end{gather*}
and notice that, since $v_h-v=h z_h$, we can write
$$
\begin{array}{rcl}
- g_h&:=& (v \cdot \nabla z_h + z_h \cdot \nabla v_h) - (v \cdot \nabla z + z \cdot \nabla v) \medskip \\
&=&  v \cdot \nabla (z_h-z) + (z_h-z)\cdot \nabla v_h + z \cdot \nabla (v_h-v)   \medskip \\
&=& v \cdot \nabla (z_h-z) + (z_h-z)\cdot \nabla v + hz_h\cdot \nabla z_h  
\end{array}
$$
and
$$
- g_h =  \div\left((z_h-z)\otimes v + v \otimes (z_h-z) + h z_h \otimes  z_h  \right) = : - \div G_h.
$$
We use the fact that $z_h$ is uniformly bounded 
in $W^{2,2}(\Omega)\cap L^\infty_{1,\varpi}(\Omega)$ to get the following estimates
$$
\begin{array}{rcl}
\| g_h \|_{2,\Omega} & = & \| v \cdot \nabla (z_h-z) + (z_h-z)\cdot \nabla v + hz_h\cdot \nabla z_h    \|_{2,\Omega} \medskip \\
& \leq & C \left[R_0 (|z_h-z|_{1,2,\Omega}  + \lceil z_h -z\rceil_{1,\varpi,\Omega})  +  h \lceil z_h \rceil_{1,\varpi,\Omega} |z_h|_{1,2,\Omega}\right],
\end{array}
$$
$$
\lceil G_h \rceil_{2,\varpi,\Omega} = \lceil (z_h-z)\otimes v + v \otimes (z_h-z) + h z_h \otimes z_h \rceil_{2,\varpi,\Omega}  \leq  C \left( R_0  \lceil z_h - z \rceil_{1,\varpi,\Omega} +  h \lceil z_h \rceil_{1,\varpi,\Omega}^2 \right),
$$
with $R_0$ as specified above. Then using \cite[Proposition 4.5]{HST}, we first deduce that  
\begin{equation*}
\begin{split}
&\quad 
|z_h-z|_{1,2,\Omega}  + |z_h-z|_{2,2,\Omega} +  \lceil z_h-z \rceil_{1,\varpi,\Omega}
+\|r_h-r\|_{1,2,\Omega}
+\|z_{h*}^{\mathcal C}-z_{*}^{\mathcal C}\|_{3/2,2,\partial\Omega} \\
&\leq C \left[R_0(\lceil z_h -z\rceil_{1,\varpi,\Omega} + |z_h-z|_{1,2,\Omega} ) 
 +(|\xi|+|\omega| )\|z_{h*}^{\mathcal C}-z_{*}^{\mathcal C}\|_{3/2,2,\partial\Omega}
+ h \lceil z_h \rceil_{1,\varpi,\Omega} |z_h|_{1,2,\Omega} +  h \lceil z_h \rceil_{1,\varpi,\Omega}^2 
\right].
\end{split}
\end{equation*}
This estimate yields a first convergence result for $z_h-z$:
\begin{equation}
|z_h-z|_{1,2,\Omega}  + |z_h-z|_{2,2,\Omega} +  \lceil z_h-z \rceil_{1,\varpi,\Omega}
+\|r_h-r\|_{1,2,\Omega} 
+\|z_{h*}^{\mathcal C}-z_{*}^{\mathcal C}\|_{3/2,2,\partial\Omega} \to 0 \quad (h\to 0)
\label{str-conv}
\end{equation}
under suitable smallness assumptions on $R_0$ as well as $(\xi,\omega)$
and the uniform boundedness of $z_h$ already established.

Let
\begin{equation}
N:=\int_{\partial\Omega}[\sigma(z,r)+z\otimes V-(\omega\times x)\otimes z+F]n\,d\gamma,
\label{force-z}
\end{equation}
where $F=-(z\otimes v+v\otimes z)$.
Then we have
\[
|N_h-N|\leq C\left\|\big(\nabla (z_h-z), r_h-r\big)\right\|_{2,\partial\Omega}
+C(1+\|v\|_{2,\partial\Omega})\|z_{h*}^{\mathcal C}-z_*^{\mathcal C}\|_{2,\partial\Omega}+Ch\|z_{h*}^{\mathcal C}+\overline{v_*}\|_{2,\partial\Omega}^2
\]
which goes to zero as $h\to 0$ by \eqref{str-conv}, yielding $N=0$ because of $N_h=0$, see \eqref{force-zh}.

By \eqref{L2-est3}, in the case $\omega \not=0$ (the case $\omega=0$ is even simpler), we have
\begin{multline*}
\|z_h - z\|_{2,\Omega}  \leq C\left(
|\omega|^{-1/4}+\frac{|\omega\cdot\xi|^{1/2}}{|\omega|}
\right)
\big(|M_h|+|\omega||\Psi_h|\big)
+C'\||x|G_h\|_{2,\Omega}  \\
+C\left(
1+\frac{|\omega\times\xi|}{|\omega|^2}
\right)
\left[
(1+|\xi|+|\omega|)
\big(\|\nabla (z_h-z)\|_{2,\Omega}+|\Psi_h|\big)+\|r_h-r\|_{2,\Omega}+\|G_h\|_{2,\Omega}
\right],
\end{multline*}
where  
\begin{equation}
M_h := \displaystyle \int_{\partial\Omega}
[\sigma(z_h-z,r_h-r)+(z_h-z)\otimes V-(\omega\times x)\otimes (z_h-z)+G_h]\,n\,d\gamma \medskip
=N_h-N=0
\label{force-diff}
\end{equation}
on account of $N_h=N=0$, see \eqref{force-zh} and \eqref{force-z}, no matter which kind of controls we would adopt, while we have
$$
\Psi_h : = \int_{\partial\Omega}n\cdot (z_h-z)\,d\gamma \to 0
$$
as $h\to 0$ since
$|\Psi_h|\leq C\|z_{h*}^{\mathcal C}-z_*^{\mathcal C}\|_{2,\partial\Omega}$.
Hence, the estimate
$$
\|(1+|x|) G_h \|_{2,\Omega} \leq  C \left( \|v\|_{2,\Omega} \lceil z_h - z \rceil_{1,\varpi,\Omega} +  h \lceil z_h \rceil_{1,\varpi,\Omega}\| z_h \|_2 \right)
$$
and the previous convergence results \eqref{str-conv}  
yield $\|z_h - z\|_{2,\Omega} \to 0$ when $h\to 0$.
We have completed the proof provided that $R_0$ given by \eqref{inv-ball} is small enough as we have mentioned twice, which is accomplished through \eqref{small-1} with some $\kappa_1\in (0,c_0]$.
\end{proof}

\section{Necessary first order conditions for an optimal control}\label{sec_main}
In this section, we introduce the Lagrangian associated with problems \eqref{Ptaus} and \eqref{Pchis}, analyze the adjoint system and obtain a characterization of the optimal controls.
\subsection{Introduction of the Lagrangian}
Let us define
\begin{equation}\label{lis0.2}
\mathcal{Y} := \left\{v\in  W^{2,2}(\Omega) \cap L^\infty_{1,\varpi}(\Omega)   \ ;\  \nabla \cdot v = 0 \mbox{ in } \Omega\right\},
\end{equation}
\begin{multline}\label{defcalu}
\mathcal{U} := \left\{ u \in 
 L^6(\Omega)\cap D^{1,2}(\Omega)\cap D^{2,2}(\Omega) \ ;\  
V\cdot \nabla u-\omega\times u\in L^2(\Omega),
\
\nabla \cdot u = 0 \mbox{ in } \Omega, \right.
\\
\left. \exists \ell_u,k_u\in \mathbb{R}^3\ 
u=\ell_u + k_u \times x \quad \text{on} \ \partial \Omega\right\},
\end{multline}
\begin{equation}
\mathcal{Z} := L^2(\partial \Omega).
\label{test-bdy}
\end{equation}
Using these spaces, we can obtain a weak formulation for our problems
 \eqref{ncy0.0}--\eqref{ncy0.51}.
\begin{proposition}\label{P1}
Assume 
\begin{equation}\label{opt0.0}
(v,p,\vperp)\in \mathcal{Y}\times W^{1,2}(\Omega)\times  \mathcal{C}_\chi
\end{equation}
is a solution of \eqref{ncy0.0}--\eqref{ncy0.51} associated with $v_*\in \mathcal{V}_{\Gamma}$. Then
\begin{multline}\label{ncy0.6}
2 \int_{\Omega}  Dv: Du \ dx
- \int_{\Omega} (v\cdot \nabla u )\cdot v\ dx
+\int_{\Omega} \left( V\cdot \nabla u- \omega\times u \right) \cdot v \ dx
\\
-m (\xi\times \omega)\cdot \ell_u 
-((I \omega)\times \omega)\cdot k_u
-
\int_{\partial \Omega} \left((v_*+v_*^{\mathcal C})\cdot n \right) (\omega \times x)\cdot (\ell_u+k_u\times x) \ d \gamma
=0, \quad \forall u\in \mathcal{U},
\end{multline}
\begin{equation}\label{lis0.0}
 \langle v-V- \vstar-\vperp,\zeta \rangle_{\partial \Omega}=0,
 \quad \forall \zeta\in \mathcal{Z}.
\end{equation}
If $(v,p,\vperp)\in \mathcal{Y}\times W^{1,2}(\Omega)\times  \mathcal{C}_\tau$ is a solution of \eqref{ncy0.0}--\eqref{ncy0.51} associated with $v_*\in \mathcal{V}_\tau$ then, a similar result holds true, but instead of \eqref{ncy0.6}, we have
\begin{multline}\label{ncy0.6tau}
2 \int_{\Omega}  Dv: Du \ dx
- \int_{\Omega} (v\cdot \nabla u) \cdot v\ dx
+\int_{\Omega} \left( V\cdot \nabla u- \omega\times u \right) \cdot v \ dx
\\
-m (\xi\times \omega)\cdot \ell_u 
-((I \omega)\times \omega)\cdot k_u
=0, \quad \forall u\in \mathcal{U}.
\end{multline}

Conversely, if $(v,\vperp)\in \mathcal{Y}\times \mathcal{C}_\chi$
(resp. $\mathcal{Y}\times \mathcal{C}_\tau$)
satisfies \eqref{ncy0.6} (resp. \eqref{ncy0.6tau}) together with \eqref{lis0.0}, then there exists
$p\in W^{1,2}(\Omega)$ such that \eqref{ncy0.0}--\eqref{ncy0.51} hold.

\end{proposition}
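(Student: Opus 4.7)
The plan is to verify the two implications separately by integration-by-parts bookkeeping. In the direct direction I test the momentum equation \eqref{ncy0.0} against an arbitrary $u\in\mathcal{U}$ and absorb the boundary stress via the self-propulsion identities \eqref{ncy0.4}--\eqref{ncy0.5}. In the converse direction I first use compactly supported divergence-free test functions to recover the pressure through a de Rham argument, then feed back rigid-type test fields to extract the force and torque balance. The regularity encoded by $v\in W^{2,2}(\Omega)\cap L^\infty_{1,\varpi}(\Omega)$ and $p\in W^{1,2}(\Omega)$ makes every bulk and surface integral absolutely convergent and, through a standard cut-off on $\Omega_R$ followed by $R\to\infty$ along a suitable subsequence, annihilates the spurious fluxes at infinity.

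For the direct implication, pair \eqref{ncy0.0} with $u\in\mathcal{U}$ and integrate by parts using $\div u=\div v=\div V=0$. The bulk produces the first three integrals of \eqref{ncy0.6}, while on $\partial\Omega$ one picks up
\[
-\int_{\partial\Omega}\sigma(v,p)n\cdot u\,d\gamma+\int_{\partial\Omega}\bigl((v-V)\cdot n\bigr)(v\cdot u)\,d\gamma.
\]
With $u|_{\partial\Omega}=\ell_u+k_u\times x$, the scalar triple product splits the first integral into $\ell_u\cdot\int_{\partial\Omega}\sigma(v,p)n\,d\gamma+k_u\cdot\int_{\partial\Omega}x\times\sigma(v,p)n\,d\gamma$, and each vector is replaced via \eqref{ncy0.4}--\eqref{ncy0.5}. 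Substituting $v=V+v_*+v_*^{\mathcal C}$ on $\partial\Omega$ and using the algebraic identity $(v_*+v_*^{\mathcal C}+V+\omega\times x)-v=\omega\times x$, the combined surface contribution collapses into precisely $-m(\xi\times\omega)\cdot\ell_u-((I\omega)\times\omega)\cdot k_u-\int_{\partial\Omega}((v_*+v_*^{\mathcal C})\cdot n)(\omega\times x)\cdot u\,d\gamma$, recovering \eqref{ncy0.6}. Identity \eqref{lis0.0} is the $L^2$-trace of \eqref{ncy0.2}. In the tangential case $v_*\cdot n=0$ and $v_*^{\mathcal C}\cdot n=0$ (because $\mathcal{C}_\tau$ consists of fields of the form $(\,\cdot\,\times n)\times n$), so the last integral drops and \eqref{ncy0.6tau} follows.

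For the converse, restrict first to $u\in\mathcal{D}(\Omega)\subset\mathcal{U}$; then $\ell_u=k_u=0$ and \eqref{ncy0.6} reduces, after undoing the integration by parts, to
\[
\int_{\Omega}\bigl[-\Delta v+(v-V)\cdot\nabla v+\omega\times v\bigr]\cdot u\,dx=0\qquad\forall u\in\mathcal{D}(\Omega).
\]
A de Rham argument in the spirit of \cite[Lemma VIII.1.1]{G} supplies a pressure $p\in L^2_{\mathrm{loc}}(\overline\Omega)$ such that $-\Delta v+\nabla p+(v-V)\cdot\nabla v+\omega\times v=0$ in $\Omega$. Reading this as a linear Oseen system with source $f=-v\cdot\nabla v\in L^2(\Omega)\cap D^{-1,2}(\Omega)$ (from $v\in L^\infty_{1,\varpi}\cap D^{1,2}$) and Dirichlet data $v|_{\partial\Omega}\in W^{3/2,2}(\partial\Omega)$ (from \eqref{lis0.0} and $v\in W^{2,2}$), the uniqueness in Proposition \ref{P23} identifies $(v,p)$ with its unique Oseen solution and places $p\in W^{1,2}(\Omega)$ after the natural normalisation, yielding \eqref{ncy0.0}--\eqref{ncy0.1}; equation \eqref{ncy0.2} is \eqref{lis0.0}. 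To extract \eqref{ncy0.4}--\eqref{ncy0.5}, for any $(\ell,k)\in\mathbb{R}^3\times\mathbb{R}^3$ I construct $u=\psi(\ell+k\times x)+\mathbb{B}[(\ell+k\times x)\cdot\nabla\psi]\in\mathcal{U}$, where $\psi$ is a radial cut-off equal to $1$ in a neighbourhood of $\partial\Omega$ and $\mathbb{B}$ is the Bogovskii operator on a bounded annulus (the compatibility condition for $\mathbb{B}$ is automatic because $\ell+k\times x$ is divergence-free). Reversing the integration by parts of the direct implication and using the already established strong equation to cancel the bulk, \eqref{ncy0.6} becomes a linear functional of $(\ell,k)$ whose vanishing is precisely \eqref{ncy0.4}--\eqref{ncy0.5}.

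The main obstacle is the boundary-term bookkeeping in the direct implication: one must carefully combine the pressure-stress integral (rewritten via \eqref{ncy0.4}--\eqref{ncy0.5}) with the convective boundary integral (arising from integrating $(v-V)\cdot\nabla v$ by parts) and check that the three pieces telescope into the single clean surface term in $\omega\times x$ appearing in \eqref{ncy0.6}. A secondary technical point is the construction of rigid-type test fields in $\mathcal{U}$; the cut-off/Bogovskii recipe is routine, and the integrability $V\cdot\nabla u-\omega\times u\in L^2(\Omega)$ holds automatically since the resulting $u$ is compactly supported.
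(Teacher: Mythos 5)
Your argument follows the same route as the paper's proof: pair the momentum equation with a (cut-off) test field, integrate by parts, replace the boundary stress integral via \eqref{ncy0.4}--\eqref{ncy0.5} so that the nonlinear boundary contributions telescope down to the $\omega\times x$ term, and pass $R\to\infty$ using the extra summability of $v$, $p$, $u$; for the converse, restrict to $\mathcal D(\Omega)$ to recover the pressure by a de Rham argument and upgrade its regularity by viewing \eqref{ncy0.0} as an Oseen problem, then recover \eqref{ncy0.4}--\eqref{ncy0.5} by testing once more. The only cosmetic deviations are that for the converse you use the uniqueness in Proposition~\ref{P23} to place $p\in W^{1,2}(\Omega)$ where the paper cites \cite[Proposition 2.1]{HST}, and you extract the force/torque balances with explicitly constructed cut-off/Bogovskii rigid test fields instead of simply re-testing with $\psi_R u$ for arbitrary $u\in\mathcal U$; a small sign slip aside (the Bogovskii correction should be $-\mathbb B[(\ell+k\times x)\cdot\nabla\psi]$ to cancel $\div(\psi(\ell+k\times x))$), both variants work.
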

\begin{proof}
We use the same cut-off function $\psi_R$ as in the final stage of the proof of Theorem \ref{Tmain}.
Recalling that $(\omega \times x )\cdot \nabla \psi_R=0$ and \eqref{cut-est}, from which we obtain
\[
\|V\cdot \nabla \psi_R\|_{3,\mathbb R^3} 
= \|\xi \cdot \nabla \psi_R\|_{3,\mathbb R^3}=C|\xi|
\]
with a constant $C>0$ independent of $R$.
Assume $u\in \mathcal{U}$, multiply \eqref{ncy0.0} by $\psi_R u$ and integrate by parts:
\begin{equation}
\label{opt0.4}
\begin{array}{rcl}
0 & =  & 
\displaystyle -\int_{\Omega}\div \sigma(v,p)\cdot (\psi_R u) \ dx
+ \int_{\Omega} ((v-V)\cdot \nabla v)\cdot (\psi_R u)\ dx
+\int_{\Omega}( \omega\times v)\cdot (\psi_R u) \ dx
 \medskip \\
  &  =  & 
\displaystyle -\int_{\partial \Omega} \sigma(v,p)n \cdot u \ d\gamma
+2 \int_{\Omega}  \psi_R D(v): D(u) \ dx
- \int_{\Omega} \psi_R \big((v-V)\cdot \nabla u\big) \cdot v\ dx
\medskip \\
&  & \displaystyle + \int_{\partial \Omega} (v-V)\cdot n  (v\cdot u) \ dx
-\int_{\Omega}\psi_R (\omega\times u)\cdot v \ dx
\medskip \\
&  & \displaystyle + \int_{\Omega}   (2 D(v)-p \mathbb{I}_3): (u \otimes \nabla \psi_R) \ dx
- \int_{\Omega} \nabla \psi_R \cdot (v-V) (u \cdot v)\ dx.
\end{array}
\end{equation}
On the other hand, using \eqref{ncy0.4} and \eqref{ncy0.5}, we have
$$
m (\xi\times \omega)\cdot\ell_u 
+((I \omega)\times \omega)\cdot k_u
+
\int_{\partial \Omega} \left[-\sigma(v,p)n +  \left((v_*+v_*^{\mathcal C})\cdot n \right) (v_* +v_*^{\mathcal C}+ V + \omega \times x)
  \right]\cdot u \ d \gamma
 =0.
$$
Combining the above relation with \eqref{opt0.4} yields 
\begin{multline}\label{18:08}
0=2 \int_{\Omega}  \psi_R D(v): D(u) \ dx
- \int_{\Omega} \psi_R  v\cdot \nabla u \cdot v\ dx
+\int_{\Omega}\psi_R ( V\cdot \nabla u- \omega\times u)\cdot v \ dx
\\
+ \int_{\Omega}   (2 D(v)-p \mathbb{I}_3): (u \otimes \nabla \psi_R) \ dx
- \int_{\Omega} \nabla \psi_R \cdot (v-V) (u \cdot v)\ dx
\\
-m (\xi\times \omega)\cdot\ell_u 
-((I \omega)\times \omega)\cdot k_u
-
\int_{\partial \Omega}  \left((v_*+v_*^{\mathcal C})\cdot n \right) (\omega \times x) \cdot u \ d \gamma.
\end{multline}
Recalling \eqref{cut-est} together with the summability properties given in \eqref{lis0.2} and \eqref{defcalu}, we get
\begin{equation}\label{18:11}
\left| \int_{\Omega}   (2 D(v)-p \mathbb{I}_3): (u \otimes \nabla \psi_R) \ dx\right| \leq \| \nabla \psi_R \|_{3,A_{R,2R}}\left(\|D(v)\|_{2,\Omega}+\|p\|_{2,\Omega}\right) \|u\|_{6,A_{R,2R}} \to 0 \quad \text{as} \ R\to \infty
\end{equation}
and
\begin{equation}\label{17:53}
\left| \int_{\Omega} \nabla \psi_R \cdot (v-V) (u \cdot v)\ dx\right| 
\leq C \left(\frac 1R \|v\|^2_{2,\Omega} \|u\|_{\infty,\Omega}
+ \| \nabla \psi_R \|_{3,A_{R,2R}}\|v\|_{2,\Omega} \|u\|_{6,A_{R,2R}} \right).
\end{equation}
Hence, letting $R\to \infty$ in \eqref{18:08}, yields \eqref{ncy0.6}.

Conversely, by taking $u\in \mathcal{D}(\Omega)$ in \eqref{ncy0.6}, we find that there exists 
 $p\in L^2_{\rm loc}(\overline{\Omega})$ such that \eqref{ncy0.0} holds. 
Then applying \cite[Proposition 2.1]{HST} with $v\otimes v\in L^\infty_{2,\varpi}(\Omega)$ and $f=-\div(v\otimes v)\in L^2(\Omega)$, we deduce that
$p\in W^{1,2}(\Omega)$. Finally, multiplying \eqref{ncy0.0} by $\psi_R u\in \mathcal{U}$ as above, taking $R\to \infty$ and comparing with 
\eqref{ncy0.6}, we obtain \eqref{ncy0.4} and \eqref{ncy0.5}.
\end{proof}

\begin{remark}
The summability properties assumed for $v$ and $u$ imply that the weak formulation \eqref{ncy0.6} is meaningful. In particular, $v\in L^2(\Omega)\cap L^\infty(\Omega)$ and $\nabla u\in L^2(\Omega)$ guarantee that the integral $\int_{\Omega} (v\cdot \nabla u) \cdot v\ dx$ is finite. Moreover, the integral $\int_{\Omega} [(\omega \times x)\cdot \nabla u- \omega\times u]\cdot v \ dx$ is finite because $(\omega \times x)\cdot \nabla u- \omega\times u \in L^2(\Omega)$. 
\end{remark}

Proposition \ref{P1} and the above remarks lead to the following definition of the Lagrangians:
\begin{equation}
\begin{array}{rcl}
\displaystyle \mathcal{L}_{\Gamma} (v,\vperp,\vstar,u,\zeta) & :=  & 
\displaystyle  \int_{\Omega}  |D(v)|^2 \ dx
+\frac{1}{4}\int_{\partial\Omega}(v_*+v_*^{\mathcal C})\cdot n |V+v_*+v_*^{\mathcal C}|^2\,d\gamma
\medskip \\
&& \displaystyle -2 \int_{\Omega}  D(v): D(u) \ dx
+ \int_{\Omega} (v\cdot \nabla u) \cdot v\ dx
\medskip \\
 && \displaystyle -\int_{\Omega} (V\cdot \nabla u- \omega\times u)\cdot v \ dx
+m (\xi\times \omega)\cdot \ell_u 
+((I \omega)\times \omega)\cdot k_u
\medskip \\
&& \displaystyle +\int_{\partial \Omega} \left((v_*+v_*^{\mathcal C})\cdot n \right) (\omega \times x)\cdot (\ell_u+k_u\times x) \ d \gamma
- \langle v-V- \vstar-v_*^{\mathcal C},\zeta\rangle_{\partial \Omega} 
\end{array}
\label{lagrangeNS}
\end{equation}
for $(v,\vperp,\vstar, u,\zeta)\in \mathcal{Y}\times  \mathcal{C}_\chi\times  \mathcal{V}_{\Gamma}\times \mathcal{U}\times \mathcal{Z},$ and \begin{equation}
\begin{array}{rcl}
\displaystyle {\mathcal L}_\tau (v,\vperp,\vstar,u,\zeta) & :=  & 
\displaystyle  \int_{\Omega}  |D(v)|^2 \ dx
 -2 \int_{\Omega}  D(v): D(u) \ dx
+ \int_{\Omega} (v\cdot \nabla u) \cdot v\ dx
\medskip \\
 && \displaystyle -\int_{\Omega} \left(V\cdot \nabla u- \omega\times u \right) \cdot v \ dx
+m (\xi\times \omega)\cdot \ell_u 
+((I \omega)\times \omega)\cdot k_u
\medskip \\
&& \displaystyle 
- \langle v-V- \vstar-v_*^{\mathcal C},\zeta\rangle_{\partial \Omega} 
\end{array}
\label{lagrangeNS2}
\end{equation}
for $(v,\vperp,\vstar, u,\zeta)\in \mathcal{Y}\times  \mathcal{C}_\tau\times  \mathcal{V}_\tau\times \mathcal{U}\times \mathcal{Z}.$

Let $\widehat v_*$ be a solution of the problem \eqref{Pchis}
and denote by 
$$(\widehat{v},\widehat v_*^{\mathcal C},\widehat p)\in \mathcal{Y}\times \mathcal{C}_\chi\times W^{1,2}(\Omega)$$  
the corresponding solution 
of \eqref{ncy0.0}--\eqref{ncy0.51} given by Theorem \ref{Tmain}.
We now obtain the adjoint system by considering the equations
\begin{equation}\label{lis0.7}
D_v \mathcal{L}_{\Gamma} (\widehat{v},\widehat v_*^{\mathcal C},\widehat{v}_*,\widehat{u},\widehat{\zeta}) v = 0 
\quad \forall v \in \mathcal{Y},
\end{equation}
\begin{equation}\label{lis0.7bis}
D_{v_*^{\mathcal C}} \mathcal{L}_{\Gamma} (\widehat{v},\widehat v_*^{\mathcal C},\widehat{v}_*,\widehat{u},\widehat{\zeta}) v_*^{\mathcal C} = 0 
\quad \forall v_*^{\mathcal C} \in \mathcal{C}_\chi
\end{equation}
for the unknowns $\widehat{u},\widehat{\zeta}$. In the case of tangential controls, equations \eqref{lis0.7} and \eqref{lis0.7bis} are replaced by
\begin{equation}
D_v {\mathcal L}_\tau (\widehat{v},\widehat v_*^{\mathcal C},\widehat{v}_*,\widehat{u},\widehat{\zeta}) v = 0 
\quad \forall v \in \mathcal{Y},
\label{derLtau}
\end{equation}
\begin{equation}\label{derLtaubis}
D_{v_*^{\mathcal C}} \mathcal{L}_{\tau} (\widehat{v},\widehat v_*^{\mathcal C},\widehat{v}_*,\widehat{u},\widehat{\zeta}) v_*^{\mathcal C} = 0 
\quad \forall v_*^{\mathcal C} \in \mathcal{C}_\tau.
\end{equation}

By computing the G\^ateaux derivatives of $\mathcal{L}_\Gamma$ or $\mathcal{L}_\tau$, we can rewrite the above equations. Such a calculation is standard but for sake of completeness, we give it in the case of \eqref{derLtau}:
we have to pass to the limit $h \to 0$ in
$$
\frac{{\mathcal L}_\tau (\widehat{v}+hv,\widehat v_*^{\mathcal C},\widehat{v}_*,\widehat{u},\widehat{\zeta}) - {\mathcal L}_\tau (\widehat{v},\widehat v_*^{\mathcal C},\widehat{v}_*,\widehat{u},\widehat{\zeta})}{h}
$$
where
$$
\begin{array}{rcl}
\displaystyle {\mathcal L}_\tau (\widehat{v}+hv,\widehat v_*^{\mathcal C},\widehat{v}_*,\widehat u,\widehat \zeta)  
& =  &
\displaystyle  \int_{\Omega}  |D(\widehat{v}+hv)|^2 \ dx
 -2 \int_{\Omega}  D(\widehat{v}+hv): D(\widehat u) \ dx
+ \int_{\Omega} ((\widehat{v}+hv)\cdot \nabla \widehat u) \cdot (\widehat{v}+hv)\ dx
\medskip \\
&& \displaystyle -\int_{\Omega} \left(V\cdot \nabla \widehat u- \omega\times \widehat u \right)\cdot (\widehat{v}+hv) \ dx
+m (\xi\times \omega)\cdot \ell_{\widehat u} 
+((I \omega)\times \omega)\cdot k_{\widehat u}
\medskip \\
\displaystyle 
&& \displaystyle - \langle \widehat{v}+hv -V- \widehat{v}_*-\widehat v_*^{\mathcal C},\widehat \zeta\rangle_{\partial \Omega} 
\end{array}
$$
and
$$
\begin{array}{rcl}
\displaystyle {\mathcal L}_\tau (\widehat{v},\widehat v_*^{\mathcal C},\widehat{v}_*,\widehat u,\widehat \zeta) & =  & 
\displaystyle  \int_{\Omega}  |D(\widehat{v})|^2 \ dx
 -2 \int_{\Omega}  D(\widehat{v}): D(\widehat u) \ dx
+ \int_{\Omega} (\widehat{v}\cdot \nabla \widehat u) \cdot \widehat{v}\ dx
\medskip \\
 && \displaystyle -\int_{\Omega} \left( V\cdot \nabla \widehat u- \omega\times \hat u \right) \cdot \widehat{v} \ dx
+m (\xi\times \omega)\cdot \ell_{\widehat u} 
+((I \omega)\times \omega)\cdot k_{\widehat u}
\medskip \\
&& \displaystyle 
- \langle \widehat{v}-V- \widehat{v}_*-\widehat v_*^{\mathcal C},\widehat \zeta\rangle_{\partial \Omega} .
\end{array}
$$
Simplifying the above expressions and letting $h\to 0$ in 
$$
\begin{array}{rcl}
&  & \displaystyle \frac{{\mathcal L}_\tau (\widehat{v}+hv,\widehat v_*^{\mathcal C},\widehat{v}_*,\widehat{u},\widehat{z}) - {\mathcal L}_\tau (\widehat{v},\widehat v_*^{\mathcal C},\widehat{v}_*,\widehat{u},\widehat{\zeta})}{h} \medskip \\
& = & \displaystyle 2  \int_{\Omega}  D(\hat v): D(v) \ dx + h \int_\Omega |D(v)|^2\medskip \\
&  & \displaystyle  + \int_{\Omega} \big(\widehat{v}\cdot \nabla \widehat{u}\big)\cdot v\ dx
+ \int_{\Omega} \big(v\cdot \nabla \widehat{u}\big)\cdot \widehat{v}\ dx + h \int_{\Omega} \big(v \cdot \nabla \widehat{u}\big)\cdot v\ dx \medskip \\
&  & \displaystyle -2 \int_{\Omega}  D(v): D(\widehat u) \ dx
 -\int_{\Omega} [(V\cdot \nabla) \widehat u- \omega\times \widehat u]\cdot v \ dx
- \langle v,\widehat \zeta \rangle_{\partial \Omega} 
\end{array}
$$
yields that \eqref{derLtau} is equivalent to
\begin{multline}\label{ae1NS}
 \displaystyle 
 2 \int_{\Omega} D(\widehat{v} - \widehat{u} ):D(v) dx 
+ \int_{\Omega} \big(\widehat{v}\cdot \nabla \widehat{u}\big)\cdot v\ dx 
+ \int_{\Omega} \big(v\cdot \nabla \widehat{u}\big)\cdot \widehat{v}\ dx \medskip \\ 
 -\int_{\Omega} \left( V\cdot \nabla \hat u- \omega\times \hat u \right) \cdot v \ dx
- \langle v,\hat \zeta \rangle_{\partial \Omega} =0, \quad \forall v \in \mathcal Y.
\end{multline}
By similar calculations, we see that the relation \eqref{lis0.7} is also equivalent to \eqref{ae1NS}, whereas \eqref{lis0.7bis} and \eqref{derLtaubis} can be respectively written as follows:
\begin{multline}
\frac{1}{4}\int_{\partial\Omega} v_*^{\mathcal C}\cdot n |V+\widehat v_*+\widehat  v_*^{\mathcal C}|^2\,d\gamma
+\frac{1}{2}\int_{\partial\Omega} (\widehat v_*+\widehat  v_*^{\mathcal C})\cdot n (V+\widehat v_*+\widehat  v_*^{\mathcal C})\cdot v_*^{\mathcal C}\,d\gamma
\\
+\int_{\partial \Omega} \left(v_*^{\mathcal C}\cdot n \right) (\omega \times x)\cdot \widehat u \ d \gamma
+ \langle \vperp,\widehat{\zeta} \rangle_{\partial \Omega} =0, \quad \forall \vperp\in  \mathcal{C}_\chi,\label{ae2NS} 
\end{multline}
and 
\begin{equation}
\langle \vperp,\widehat{\zeta} \rangle_{\partial \Omega} =0, \quad \forall \vperp\in  \mathcal{C}_\tau.\label{ae2NSter} 
\end{equation}
Once we have a solution $(\widehat u,\widehat\zeta)\in {\mathcal U}\times {\mathcal Z}$ to \eqref{ae1NS}, see \eqref{defcalu}--\eqref{test-bdy}, we deduce that there exists a pressure $\widehat{q}\in L^2_{\rm loc}(\overline\Omega)$ which together with $\widehat u$ obeys
\begin{gather}
-\div \sigma(\widehat{u} - \widehat{v},\widehat{q}-\widehat{p}) 
-   \widehat{v} \cdot \nabla \widehat{u}-    (\nabla \widehat{u} )^\top  \widehat{v} 
+ V\cdot \nabla \widehat{u}- \omega\times \widehat{u} =  0 \quad \text{in} \ \Omega \label{cs1NS}\\
\div \widehat{u} =0 \quad \text{in} \ \Omega \label{cs2NS} \\
\widehat{u} = \ell_{\widehat u}+k_{\widehat u}\times x \quad \text{on}\ \partial \Omega \label{cs3NS} \\
\lim_{|x|\to \infty} \widehat{u}(x) = 0. \label{cs4NS}
\end{gather}
Since $\widehat v\cdot\nabla\widehat u+(\nabla\widehat u)^\top\widehat v-\Delta\widehat v+\nabla\widehat p\in L^2(\Omega)\cap D^{-1,2}(\Omega)$, see Lemma \ref{L1} below, we employ Proposition \ref{P23} to see that the pressure $\widehat q\in W^{1,2}(\Omega)$ can be singled out.

Taking the scalar product of \eqref{cs1NS} with $\psi_Rv$, where $v\in \mathcal{Y}$ is arbitrary and $\psi_R$ is the same cut-off function as in the final stage of the proof of Theorem \ref{Tmain},
integrating by parts and letteing $R\to\infty$ (where $\widehat q\in L^2(\Omega)$ is used),
we deduce from \eqref{ae1NS} that
\begin{equation}\label{18:31}
\widehat{\zeta} =  \sigma (\widehat{v} - \widehat{u},\widehat{p}-\widehat{q})n.
\end{equation}
Recalling that  $\mathcal{Z} = L^2(\partial \Omega)$ and replacing \eqref{18:31} in \eqref{ae2NS}, yields
\begin{multline}\label{cs5NS}
\int_{\partial \Omega}
\big(\sigma( \widehat{v}-\widehat{u},\widehat{p}-\widehat{q})n\big) \cdot \vperp \ d\gamma
+\frac{1}{4}\int_{\partial\Omega} v_*^{\mathcal C}\cdot n |V+\widehat v_*+\widehat  v_*^{\mathcal C}|^2\,d\gamma
\\
+\frac{1}{2}\int_{\partial\Omega} (\widehat v_*+\widehat  v_*^{\mathcal C})\cdot n (V+\widehat v_*+\widehat  v_*^{\mathcal C})\cdot v_*^{\mathcal C}\,d\gamma
+\int_{\partial \Omega} \left(v_*^{\mathcal C}\cdot n \right) (\omega \times x)\cdot \widehat u \ d \gamma=0, \quad \forall \vperp\in  \mathcal{C}_\chi.
\end{multline}

In the case of tangential controls, \eqref{ae2NSter} takes the form
\begin{equation}\label{cs5NSbis}
\int_{\partial \Omega}\big(\sigma(\widehat{v}-\widehat{u},\widehat{p}-\widehat{q})n\big)\cdot\vperp \ d\gamma = 0, \quad \forall \vperp\in  \mathcal{C}_\tau.
\end{equation}

\subsection{Well-posedness of the adjoint system}
Now we show that the adjoint system \eqref{cs1NS}--\eqref{cs4NS} subject to \eqref{cs5NS}/\eqref{cs5NSbis} is well-posed.  With  $\widehat{v} \in {\mathcal Y}$ given, we can define the following mapping
\begin{equation}\label{deffrakF}
\mathfrak{F} = \mathfrak{F}_{\widehat{v}} : D^{1,2}(\Omega)   \to L^2(\Omega) \cap D^{-1,2}(\Omega),
\quad
u \mapsto \widehat{v} \cdot \nabla {u}  +  (\nabla {u} )^\top  \widehat{v}.
\end{equation}
\begin{lemma}\label{L1}
Assume that $\widehat{v} \in {\mathcal Y}$. Then
the mapping $\mathfrak{F}$ is well-defined and continuous:
$$
\| \mathfrak{F}(u)\|_{L^2(\Omega) \cap D^{-1,2}(\Omega)} \leq C \lceil \widehat{v} \rceil_{1,\varpi,\Omega} |u|_{1,2,\Omega}.
$$
\end{lemma}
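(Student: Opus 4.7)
\medskip

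My plan is to exploit two simple consequences of $\widehat{v}\in L^\infty_{1,\varpi}(\Omega)$: the pointwise $L^\infty$ bound
\[
\|\widehat{v}\|_{\infty,\Omega}\leq \lceil\widehat{v}\rceil_{1,\varpi,\Omega}
\]
(from $\varpi\geq 1$, which holds since $\varpi$ is a product of two factors each $\geq 1$ in both cases of \eqref{weight}) and the asymptotic decay
\[
|\widehat{v}(x)|\leq \frac{C\,\lceil\widehat{v}\rceil_{1,\varpi,\Omega}}{1+|x|},\qquad x\in\Omega,
\]
which follows from the observation that $\varpi(x)\geq c(1+|x-a|)$ with $a=(\omega\times\xi)/|\omega|^2$ (or $a=0$ if $\omega=0$), so that $(1+|x|)/\varpi(x)$ is bounded on $\overline{\Omega}$ by a constant depending on $\xi,\omega$.

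For the $L^2$-estimate, I would use the pointwise bound $|\mathfrak{F}(u)|\leq 2|\widehat{v}||\nabla u|$ together with the $L^\infty$ control of $\widehat{v}$ to get
\[
\|\mathfrak{F}(u)\|_{2,\Omega}\leq 2\|\widehat{v}\|_{\infty,\Omega}\,\|\nabla u\|_{2,\Omega}
\leq 2\,\lceil\widehat{v}\rceil_{1,\varpi,\Omega}\,|u|_{1,2,\Omega}.
\]
No decay is required here.

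For the $D^{-1,2}$-estimate, I would argue by duality. Given $\varphi\in \mathcal{D}(\Omega)$ (or, more generally, $\varphi\in D^{1,2}_0(\Omega)$), I would pair and use the decay bound for $\widehat{v}$:
\[
\bigl|\langle \mathfrak{F}(u),\varphi\rangle\bigr|
\leq 2\int_{\Omega}|\widehat{v}|\,|\nabla u|\,|\varphi|\,dx
\leq 2C\,\lceil\widehat{v}\rceil_{1,\varpi,\Omega}\,\|\nabla u\|_{2,\Omega}\,
\Bigl\|\frac{\varphi}{1+|x|}\Bigr\|_{2,\Omega}.
\]
The last factor is controlled by a Hardy/Sobolev-type inequality on the exterior domain, $\|\varphi/(1+|x|)\|_{2,\Omega}\leq C|\varphi|_{1,2,\Omega}$ for $\varphi\in D^{1,2}_0(\Omega)$, which I would justify in one line by splitting into a bounded part (where Sobolev's $\|\varphi\|_{6,\Omega}\leq C|\varphi|_{1,2,\Omega}$ plus H\"older suffices) and a far-field part (where $1+|x|$ and $|x-x_0|$ are comparable for some $x_0\in \mathbb{R}^3\setminus \Omega$, and the classical Hardy inequality applies after zero-extension). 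This gives
\[
|\mathfrak{F}(u)|_{-1,2,\Omega}\leq C\,\lceil\widehat{v}\rceil_{1,\varpi,\Omega}\,|u|_{1,2,\Omega}.
\]
Linearity together with the two bounds yields continuity of $\mathfrak{F}$.

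The only subtle point is the Hardy-Sobolev inequality on an exterior domain; everything else is a direct consequence of the weighted $L^\infty$ information on $\widehat v$, so I do not expect any serious obstacle. The fact that $\widehat v$ need not be in $L^3(\Omega)$ (its decay rate $1/(1+|x|)$ is borderline) is precisely why one should integrate against $\varphi/(1+|x|)$ via Hardy rather than try H\"older with $\widehat v\in L^3$.
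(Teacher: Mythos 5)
Your argument is correct and matches the paper's own proof essentially step for step: the same $L^\infty$ bound for the $L^2$-estimate, and the same duality-plus-Hardy argument for the $D^{-1,2}$-estimate (the paper writes the Hardy step directly as $\|\varphi/\varpi\|_{2,\Omega}\leq C\|\nabla\varphi\|_{2,\Omega}$, which is exactly what you obtain after noting $\varpi(x)\geq c(1+|x|)$). The only difference is that you make the intermediate lower bound on $\varpi$ explicit, which is a cosmetic refinement rather than a different route.
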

\begin{proof}
Both terms in the formula of $\mathfrak{F}_{\widehat{v}}(u)$ can be handled in the same way.
First, it is immediate to obtain
$$
\|\widehat{v} \cdot \nabla {u}+   (\nabla {u} )^\top  \widehat{v}\|_{2,\Omega}\leq 
2\|\widehat{v}\|_{\infty,\Omega} \|\nabla u\|_{2,\Omega}
\leq 2 \lceil \widehat{v} \rceil_{1,\varpi,\Omega} | u |_{1,2,\Omega}.
$$
Now we notice that, for arbitrary $\varphi \in D^{1,2}_0(\Omega)$, by 
 the Hardy inequality,
$$
\langle \widehat{v} \cdot \nabla {u} +   (\nabla {u} )^\top  \widehat{v}, \varphi \rangle_{D^{-1,2}(\Omega),D^{1,2}_0(\Omega)} \leq   
C \lceil \widehat{v} \rceil_{1,\varpi,\Omega} \| \nabla u \|_{2,\Omega} \left\| \frac{\varphi} \varpi \right\|_{2,\Omega} \leq   
C \lceil \widehat{v} \rceil_{1,\varpi,\Omega} \| \nabla u \|_{2,\Omega} \| 
\nabla \varphi \|_{2,\Omega}, 
$$
where $\varpi(x)$ is given by \eqref{weight} and therefore
$$
|\widehat{v} \cdot \nabla {u}+   (\nabla {u} )^\top  \widehat{v}|_{-1,2,\Omega} 
\leq C \lceil \widehat{v} \rceil_{1,\varpi,\Omega}  | u |_{1,2,\Omega}.
$$
\end{proof}
\begin{theorem}\label{T1}
There exists a constant $\kappa_2\in (0,c_0]$ 
depending on $\Omega$
such that if $\xi,\omega\in \mathbb R^3$ and $v_*\in {\mathcal V}_\Gamma$ (resp. $v_*\in {\mathcal V}_\tau$) satisfy
\begin{equation}
|\xi|\leq\kappa_2, \qquad |\omega|\leq\kappa_2, \qquad
\|v_*\|_{3/2,2,\partial\Omega}\leq\kappa_2,
\label{small-2}
\end{equation}
then the adjoint system \eqref{cs1NS}--\eqref{cs4NS} subject to
\eqref{cs5NS} (resp. \eqref{cs5NSbis}) admits a unique solution 
$(\widehat{u},\widehat{q},\ell_{\widehat{u}},k_{\widehat{u}})\in \mathcal{U}\times W^{1,2}(\Omega)\times \mathbb R^3 \times\mathbb R^3$.
\end{theorem}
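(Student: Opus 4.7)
The plan is to mimic the strategy of Theorem \ref{Tmain}: rewrite \eqref{cs1NS} as a generalized Oseen system perturbed by a small lower-order operator, reduce to a coupled problem of the form treated in \cite[Proposition 4.5]{HST}, and solve by a contraction argument. First, I use the state equation \eqref{ncy0.0} satisfied by $(\widehat v,\widehat p)$ to replace $\Delta\widehat v-\nabla\widehat p$ in the expansion of $\div\sigma(\widehat u-\widehat v,\widehat q-\widehat p)$, so that \eqref{cs1NS} is equivalent to
\begin{equation*}
-\div\sigma(\widehat u,\widehat q)+V\cdot\nabla\widehat u-\omega\times\widehat u
=\mathfrak F_{\widehat v}(\widehat u)+R(\widehat v),
\qquad \div \widehat u=0,
\end{equation*}
with source $R(\widehat v):=(V-\widehat v)\cdot\nabla\widehat v-\omega\times\widehat v$. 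By Theorem \ref{Tmain}, $R(\widehat v)\in L^2(\Omega)\cap D^{-1,2}(\Omega)$ with norm controlled by $(|(\xi,\omega)|+\|v_*\|_{3/2,2,\partial\Omega})^2$, and by Lemma \ref{L1} the perturbation $\mathfrak F_{\widehat v}$ maps $D^{1,2}(\Omega)$ continuously into $L^2(\Omega)\cap D^{-1,2}(\Omega)$ with small norm $O(\lceil\widehat v\rceil_{1,\varpi,\Omega})=O(\kappa_2)$.

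The left-hand side is a generalized Oseen operator in which $V,\omega$ appear with opposite signs to the one treated in Proposition \ref{P23} and in \cite[Proposition 4.5]{HST}; this sign change amounts to replacing $\xi\to-\xi,\omega\to-\omega$ and does not affect any of the arguments, since the linear theory of Section \ref{sec_not} is symmetric under such sign reversals. Accordingly, for any right-hand side $f\in L^2(\Omega)\cap D^{-1,2}(\Omega)$ and any $(\ell,k)\in\mathbb R^3\times\mathbb R^3$, the linear boundary-value problem with $\widehat u|_{\partial\Omega}=\ell+k\times x$ is uniquely solvable in $\mathcal U\times W^{1,2}(\Omega)$ with the usual quantitative estimates.

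To close the system, I introduce the adjoint counterparts $w^{(i)},W^{(i)}$ of \eqref{313}--\eqref{323}, i.e., the solutions of the linear adjoint Oseen system with boundary data $e_i$ and $e_i\times x$ respectively, and decompose
$\widehat u = U_f + \sum_{i=1}^3\ell_{\widehat u,i}\,w^{(i)}+\sum_{i=1}^3 k_{\widehat u,i}\,W^{(i)}$,
where $U_f$ has zero boundary trace and carries the source $\mathfrak F_{\widehat v}(\widehat u)+R(\widehat v)$. Testing this decomposition against the generators of $\mathcal C_\chi$ (resp.\ $\mathcal C_\tau$) in the closing relation \eqref{cs5NS} (resp.\ \eqref{cs5NSbis}) turns that relation into a $6\times 6$ linear system for $(\ell_{\widehat u},k_{\widehat u})$. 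At $\xi=\omega=0$ and $\widehat v=0$ this matrix coincides, by the self-adjointness of the pure Stokes problem, with the Gramian $\{\langle g^{(i)},g^{(j)}\rangle,\langle g^{(i)},G^{(j)}\rangle,\ldots\}$ already proved invertible in \cite[Theorem 1.1]{HST}; consequently, for $\kappa_2$ small, the perturbed matrix remains invertible with bounded inverse.

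With this inner solver in hand, the map $\Psi:\overline u\mapsto\widehat u$ sending $\overline u\in\mathcal U$ to the solution of the above linear problem with source $\mathfrak F_{\widehat v}(\overline u)+R(\widehat v)$ is well defined on the ball $\{u\in\mathcal U:\|u\|_{\mathcal U}\leq \rho\}$ for some $\rho=O(\kappa_2)$. Combining the linear estimate with $\|\mathfrak F_{\widehat v}(\overline u-\overline u')\|_{L^2\cap D^{-1,2}}\leq C\kappa_2\,|\overline u-\overline u'|_{1,2,\Omega}$ from Lemma \ref{L1}, one sees that $\Psi$ is a strict contraction for $\kappa_2$ small enough, yielding a unique fixed point $\widehat u\in\mathcal U$ together with the pressure $\widehat q\in W^{1,2}(\Omega)$ (singled out via Proposition \ref{P23}) and a unique pair $(\ell_{\widehat u},k_{\widehat u})\in\mathbb R^3\times\mathbb R^3$. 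Uniqueness for the original problem follows by applying the same contraction estimate to the difference of two solutions. The main obstacle is the verification of the invertibility of the $6\times 6$ closing matrix after perturbation; this is the step that forces the smallness threshold $\kappa_2$ and ultimately dictates how small the hypotheses \eqref{small-2} must be.
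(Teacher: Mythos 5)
Your proposal follows essentially the same strategy as the paper: reformulate \eqref{cs1NS} as a sign-reversed Oseen equation with a small lower-order perturbation $\mathfrak F_{\widehat v}$, decompose $\widehat u$ into a zero-boundary-trace part carrying the source plus a linear combination of the auxiliary fields of \eqref{313}--\eqref{323} carrying the rigid boundary trace, reduce the closing relation to a $6\times 6$ linear system, and conclude by a contraction argument. Your observation about the sign reversal $\xi\mapsto-\xi,\omega\mapsto-\omega$ is exactly right and matches what the paper uses silently, since only $|\xi|,|\omega|\leq B$ enter the estimates of Proposition \ref{P23}.

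Where you deviate, and introduce an unnecessary detour, is in the invertibility of the $6\times 6$ closing matrix. When you substitute the decomposition into \eqref{cs5NS}/\eqref{cs5NSbis} and test against the generators of $\mathcal C_\chi$ (resp.\ $\mathcal C_\tau$), the coefficient matrix that appears is $A_{ij}=\int_{\partial\Omega}\chi g^{(i)}\cdot g^{(j)}\,d\gamma$, etc.\ — the $\chi$-weighted Gramian of $\{g^{(i)},G^{(i)}\}$ — and this matrix \emph{does not depend} on $\widehat v$ or the iterate $\overline u$; all $\widehat v$- and $\overline u$-dependent terms land on the right-hand side (the paper's $\mathfrak H$ and $\mathfrak G(\overline u)$). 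So the ``$\widehat v=0$'' clause in your perturbation step is vacuous. More importantly, this $A$ is literally the same matrix used in Theorem \ref{Tmain}, and its invertibility for small $|\xi|,|\omega|$ is proved directly in \cite[Lemma 4.3]{HST} (not Theorem 1.1 of that paper); there is no need for a perturbation-from-$\xi=\omega=0$ argument, nor for the appeal to self-adjointness of Stokes (the Gramian structure of $A$ at any $\xi,\omega$ is a consequence of the decomposition, not of self-adjointness). The remaining steps — the $L^2\cap D^{-1,2}$ estimates on $\mathfrak F_{\widehat v}$ from Lemma \ref{L1}, the smallness of $R_0$, the contraction, and the recovery of $\widehat q\in W^{1,2}(\Omega)$ via Proposition \ref{P23} — are exactly as in the paper.
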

\begin{proof}
We show only the case of localized boundary values since the tangential case is similar.
Consider the space $$
\mathcal{B} := \left\{ u \in L^6(\Omega)\cap D^{1,2}(\Omega)\cap D^{2,2}(\Omega) \ ;\  
\nabla \cdot u = 0 \mbox{ in } \Omega 
\right\},
$$
 with the norm $\| u\|_{\mathcal{B}}:=|u|_{1,2,\Omega} + |u|_{2,2,\Omega}$.
 
 Assume $\overline{u} \in \mathcal{B}$ and consider the unique solution of the linear problem
\begin{gather}
-\div \sigma({u} ,{q}) + V\cdot \nabla {u}- \omega\times {u} =  
\mathfrak{F}(\overline{u})-\div \sigma(\widehat{v},\widehat{p})  \quad \text{in} \ \Omega  \label{1644}\\
\div {u} =0 \quad \text{in} \ \Omega  \\
{u} = {a} + {b} \times x \quad \text{on}\ \partial \Omega \\
\lim_{|x|\to \infty} {u}(x) = 0, \label{1645}
\\
\int_{\partial \Omega}\big(\sigma ({u},{q})n\big)\cdot \vperp \ d\gamma 
=
\int_{\partial \Omega} (\mathfrak{G}(\overline{u})+\mathfrak{H}) \cdot \vperp \ d\gamma , \quad \forall \vperp\in  \mathcal{C}_\chi,\label{1646}
\end{gather}
where 
$$
\mathfrak{G}(\overline{u}):=[(\omega \times x)\cdot \overline u] n,
$$
$$
\mathfrak{H}:=
\sigma (\widehat{v},\widehat{p})n +
\frac{1}{4} |V+\widehat v_*+\widehat  v_*^{\mathcal C}|^2 n + 
\frac{1}{2} (\widehat v_*+\widehat  v_*^{\mathcal C})\cdot n (V+\widehat v_*+\widehat  v_*^{\mathcal C}).
$$
and $\mathfrak{F}(\overline{u})$ is given by \eqref{deffrakF}. 

More precisely, the above linear system is solved by decomposing $u$ and $q$ as follows
\begin{equation}\label{19:12}
{u}=\sum_{i=1}^3 {a}_i {v}^{(i)}+\sum_{i=1}^3 {b}_i {V}^{(i)}+ {u}_f,\quad
{q}=\sum_{i=1}^3 {a}_i {q}^{(i)}+\sum_{i=1}^3 {b}_i {Q}^{(i)}+ {q}_f
\end{equation}
where $(v^{(i)},q^{(i)})$ and $(V^{(i)},Q^{(i)})$ are given by \eqref{313} and \eqref{323},
 whereas $(u_f,q_f)$ is the solution of
$$
\begin{array}{c}
-\div \sigma({u}_f ,{q}_f) + V\cdot \nabla {u}_f- \omega\times {u}_f =  
\mathfrak{F}(\overline{u})-\div \sigma(\widehat{v},\widehat{p})  \quad \text{in} \ \Omega \medskip \\
\div {u}_f =0 \quad \text{in} \ \Omega  \medskip \\
{u}_f = 0 \quad \text{on}\ \partial \Omega \medskip \\
{\displaystyle \lim_{|x|\to \infty} {u}_f(x) = 0},
\end{array}
$$
obtained in Proposition \ref{P23}.
Then $(u,p)$ automatically satisfies \eqref{1644}--\eqref{1645}. It only remains to choose $(a,b)$ such that \eqref{1646} holds. 
Using the basis \eqref{controlspace} of ${\mathcal C}_\chi$, we reduce \eqref{1646} to
\begin{equation}\label{1646-bis}
\int_{\partial \Omega} \big(\sigma({u},{q})n\big)\cdot \chi g^{(i)} \ d\gamma 
=
\int_{\partial \Omega} (\mathfrak{G}(\overline{u})+\mathfrak{H})\cdot \chi g^{(i)} \ d\gamma 
\quad (i=1,2,3),
\end{equation}
\begin{equation}\label{1646-ter}
\int_{\partial \Omega}\big(\sigma ({u},{q})n\big) \cdot \chi G^{(i)} \ d\gamma 
=
\int_{\partial \Omega} (\mathfrak{G}(\overline{u})+\mathfrak{H})\cdot \chi G^{(i)} \ d\gamma 
\quad (i=1,2,3),
\end{equation}
which, in view of \eqref{19:12} together with \eqref{014}--\eqref{024}, can be written as
\begin{equation}\label{linlin0}
A \begin{bmatrix}
a\\ b
\end{bmatrix}
= \begin{bmatrix}
c\\ d
\end{bmatrix}
\end{equation}
where
\begin{gather}
\label{cs5NS-ter}
A_{i,j}:=\int_{\partial \Omega}  \chi {g}^{(i)} \cdot {g}^{(j)} \ d\gamma \quad (i,j\leq 3),
\quad
A_{i,j}:=\int_{\partial \Omega}  \chi {g}^{(i)} \cdot {G}^{(j-3)} \ d\gamma \quad (i\leq 3, j\geq 4),
\\
\label{cs5NS-ter-ter}
A_{i,j}:=\int_{\partial \Omega}  \chi {G}^{(i-3)} \cdot {g}^{(j)} \ d\gamma \quad (i\geq 4,j\leq 3),
\quad
A_{i,j}:=\int_{\partial \Omega}  \chi {G}^{(i-3)} \cdot {G}^{(j-3)} \ d\gamma \quad (i, j\geq 4),
\end{gather}
\begin{gather}
\label{ncy3.4}
c_{i}:=\int_{\partial \Omega} \left(\mathfrak{G}(\overline{u})+\mathfrak{H}- \sigma ({u}_f,{q}_f)n\right) \cdot \chi {g}^{(i)} \ d\gamma 
\quad (i=1,2,3),
\\
\label{ncy3.4-ter}
d_{i}:=\int_{\partial \Omega} \left(\mathfrak{G}(\overline{u})+\mathfrak{H}- \sigma ({u}_f,{q}_f)n\right) \cdot \chi {G}^{(i)} \ d\gamma 
\quad (i=1,2,3).
\end{gather}
 By \cite[Lemma 4.3]{HST} we know that, for $\xi$ and $\omega$ satisfying \eqref{icd0.0}, the matrix $A$ is invertible; in fact, this is needed and even crucial in Theorem \ref{Tmain} although it is hidden in \cite[Proposition 4.5]{HST}. Thus, we obtain the existence and uniqueness of $(a,b)$ 
satisfying \eqref{linlin0}--\eqref{ncy3.4-ter} and deduce that for any 
$\overline{u}\in  {\mathcal B}$  
there exists a unique solution
$(u,q,a,b)\in {\mathcal B}\times W^{1,2}(\Omega)\times \mathbb R^3\times \mathbb R^3$
of \eqref{1644}--\eqref{1646}.
By the equation \eqref{1644} we have $V\cdot\nabla u-\omega\times u\in L^2(\Omega)$ as well, so that $u\in {\mathcal U}$.

To obtain the existence of a solution of the adjoint system \eqref{cs1NS}--\eqref{cs4NS} and \eqref{cs5NS}, we only need to show that the mapping
$$
\Xi : {\mathcal B} \to {\mathcal B}, \quad \overline{u} \mapsto u
$$
is contractive as long as $(\xi,\omega)$ and $v_*$ are
small enough. 
Using the linearity of the adjoint system, it is sufficient to consider the system
$$
\begin{array}{c}
-\div \sigma({u} ,{q}) + V\cdot \nabla {u}- \omega\times {u} =  
\mathfrak{F}(\overline{u})  \quad \text{in} \ \Omega  \medskip \\
\div {u} =0 \quad \text{in} \ \Omega \medskip \\
{u} = {a} + {b} \times x \quad \text{on}\ \partial \Omega \medskip \\
{\displaystyle \lim_{|x|\to \infty} {u}(x) = 0}, 
\end{array}
$$
with 
\begin{equation}\label{linlin}
 \begin{bmatrix}
a\\ b
\end{bmatrix}
= A^{-1} \begin{bmatrix}
c\\ d
\end{bmatrix}
\end{equation}
where
\begin{gather}
\label{ncy3.4-bis}
c_{i}:=\int_{\partial \Omega} \left(\mathfrak{G}(\overline{u})- \sigma ({u}_f,{q}_f)n\right) \cdot \chi {g}^{(i)} \ d\gamma 
\quad (i=1,2,3),
\\
\label{ncy3.4-bis-ter}
d_{i}:=\int_{\partial \Omega} \left(\mathfrak{G}(\overline{u})- \sigma ({u}_f,{q}_f)n\right) \cdot \chi {G}^{(i)} \ d\gamma 
\quad (i=1,2,3)
\end{gather}
and 
$$
\begin{array}{c}
-\div \sigma({u}_f ,{q}_f) + (V\cdot \nabla) {u}_f- \omega\times {u}_f =  
\mathfrak{F}(\overline{u})  \quad \text{in} \ \Omega \medskip \\
\div {u}_f =0 \quad \text{in} \ \Omega \medskip \\
{u}_f = 0 \quad \text{on}\ \partial \Omega \medskip \\
{\displaystyle \lim_{|x|\to \infty} {u}_f(x) = 0}.
\end{array}
$$
Using the trace theorem and Proposition \ref{P23}, we have 
$$
|(a,b)| \leq C \left(\|\mathfrak{F}(\overline{u})\|_{L^2(\Omega) \cap D^{-1,2}(\Omega)}+|\omega| |\overline{u}|_{1,2,\Omega}\right)
$$
and thus, using again Proposition \ref{P23} and Lemma \ref{L1} we deduce
$$
\| \Xi (\overline{u})\|_{\mathcal B} = \|u\|_{\mathcal B} \leq C \left( \|\mathfrak{F}(\overline{u})\|_{L^2(\Omega) \cap D^{-1,2}(\Omega)}+|(a,b)| \right)
 \leq 
C \left(\lceil \widehat{v} \rceil_{1,\varpi,\Omega} + |\omega| \right) |\overline{u}|_{1,2,\Omega}
\leq C(R_0+|\omega|)\|\overline{u}\|_{\mathcal B},
$$
where $R_0$ is given by \eqref{inv-ball}. This yields the existence and uniqueness of a solution of the adjoint system \eqref{cs1NS}--\eqref{cs4NS}  subject to \eqref{cs5NS} under the condition \eqref{small-2} with $\kappa_2\in (0,c_0]$ small enough.
\end{proof}

\subsection{Optimality condition}
Recall the mappings $\Lambda_\tau : \vstar \mapsto (v,v_*^{\mathcal C})$ and $\Lambda_\chi : \vstar \mapsto (v,v_*^{\mathcal C})$ defined by \eqref{lis1.2} and \eqref{lis1.2chi}, respectively. For both cases, we abbreviate them to $\Lambda$ and, similarly, we write $\mathcal{L}$ instead of $\mathcal{L}_{\tau}$ or $\mathcal{L}_{\Gamma}$.
Because of Proposition \ref{P1}, the functional \eqref{icd0.3} can be written as
\begin{equation}\label{lis0.4}
J(\vstar)=
 2\,
\mathcal{L}(\Lambda(\vstar), \vstar,u,\zeta),
\end{equation}
 no matter which $(u,\zeta)\in {\mathcal U}\times {\mathcal Z}$ may be. Assume \eqref{small-2} and let us take, in particular, the solution $\widehat u\in {\mathcal U}$ to the adjoint system \eqref{cs1NS}--\eqref{cs4NS} subject to \eqref{cs5NS}/\eqref{cs5NSbis} obtained in Theorem \ref{T1} together with $\widehat\zeta\in {\mathcal Z}$ given by \eqref{18:31} so that \eqref{lis0.7}--\eqref{lis0.7bis} or \eqref{derLtau}--\eqref{derLtaubis} are satisfied.

We are now in a position to provide the optimality conditions for problems
\eqref{Ptaus} and \eqref{Pchis}:
\begin{theorem}\label{main}
Let $\Omega$ be of class $C^3$. Set $\kappa_0:=\min\{\kappa_1, \kappa_2\}$, where $\kappa_1$ and $\kappa_2$ are the constants in Theorem \ref{zhz} and Theorem \ref{T1}, respectively. Suppose that $\xi, \omega\in\mathbb R^3$ and $v_*\in {\mathcal V}_\Gamma$ (resp. ${\mathcal V}_\tau$) satisfy
\[
|\xi|\leq\kappa_0, \qquad |\omega|\leq\kappa_0, \qquad
\|v_*\|_{3/2,2,\partial\Omega}\leq\kappa_0.
\]
Let $\widehat v_*$ be a solution of the optimal control problem \eqref{Pchis} (resp. \eqref{Ptaus})
with $\kappa=\kappa_0$,
$(\widehat v,\widehat v_*^{\mathcal C},\widehat p)$ the corresponding state obtained in Theorem \ref{Tmain}
and $(\widehat u, \widehat q)$ the solution of the adjoint system
\eqref{cs1NS}--\eqref{cs4NS} subject to \eqref{cs5NS} (resp. \eqref{cs5NSbis}) obtained in Theorem \ref{T1}.
Then we have
\begin{multline}\label{lis0.9}
\int_{\partial \Omega}
\big(\sigma (\widehat{v} - \widehat{u},\widehat{p}-\widehat{q})n\big) \cdot (\vstar-\widehat v_* ) \ d\gamma 
+\frac{1}{4}\int_{\partial\Omega} (\vstar-\widehat v_* )\cdot n |V+\widehat v_*+\widehat  v_*^{\mathcal C}|^2\,d\gamma
\\
+\frac{1}{2}\int_{\partial\Omega} (\widehat v_*+\widehat  v_*^{\mathcal C})\cdot n (V+\widehat v_*+\widehat  v_*^{\mathcal C})\cdot (\vstar-\widehat v_* )\,d\gamma
+\int_{\partial \Omega} \left((\vstar-\widehat v_* )\cdot n \right) (\omega \times x)\cdot \widehat u \ d \gamma\geq 0, \quad \forall v_*\in  \mathcal{V}_{\Gamma}^{\kappa_0},
\end{multline}
in the case of localized controls, while
\begin{equation}\label{lis0.9der}
\int_{\partial \Omega} 
\big(\sigma(\widehat{v} - \widehat{u},\widehat{p}-\widehat{q})n\big) \cdot (\vstar-\widehat v_* ) \ d\gamma 
\geq 0, \quad \forall v_*\in  \mathcal{V}_{\tau}^{\kappa_0},
\end{equation}
in the case of tangential controls.
\end{theorem}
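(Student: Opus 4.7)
The plan is to derive the variational inequality by the standard Lagrangian--adjoint method. The crucial observation from Proposition \ref{P1} is that when $(v, v_*^{\mathcal C})$ is the state corresponding to $v_*$, the extra terms added in \eqref{lagrangeNS} (resp. \eqref{lagrangeNS2}) vanish, so $J(v_*) = 2\mathcal{L}(\Lambda(v_*), v_*, u, \zeta)$ holds as an identity for \emph{any} test pair $(u,\zeta) \in \mathcal U \times \mathcal Z$, as already recorded in \eqref{lis0.4}.

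First I would fix an arbitrary admissible $v_*$ and use convexity of the admissible set. The path $t \mapsto \widehat v_* + t(v_* - \widehat v_*)$ for $t \in [0,1]$ lies entirely in $\mathcal V_\Gamma^{\kappa_0}$ (resp.\ $\mathcal V_\tau^{\kappa_0}$) by convexity of a norm ball, and minimality of $\widehat v_*$ gives $\varphi(t) := J(\widehat v_* + t(v_* - \widehat v_*)) \geq \varphi(0)$, hence $\varphi'(0^+) \geq 0$ whenever this one-sided derivative exists. I would then specialize the test pair to the adjoint solution, namely $\widehat u$ from Theorem \ref{T1} together with $\widehat\zeta := \sigma(\widehat v - \widehat u, \widehat p - \widehat q)n \in L^2(\partial\Omega) = \mathcal Z$. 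Applying Theorem \ref{zhz}, the map $\Lambda$ is G\^ateaux differentiable at $\widehat v_*$ along $v_* - \widehat v_*$, with derivative $(z, z_*^{\mathcal C})$ the linearized state. The chain rule then yields
\[
\varphi'(0^+) = 2\bigl[(D_v\mathcal{L})\cdot z + (D_{v_*^{\mathcal C}}\mathcal{L})\cdot z_*^{\mathcal C} + (D_{v_*}\mathcal{L})\cdot (v_* - \widehat v_*)\bigr],
\]
all derivatives evaluated at $(\widehat v, \widehat v_*^{\mathcal C}, \widehat v_*, \widehat u, \widehat\zeta)$. By construction, the adjoint equations \eqref{lis0.7}--\eqref{lis0.7bis} (resp.\ \eqref{derLtau}--\eqref{derLtaubis}) are exactly $D_v\mathcal L = 0$ and $D_{v_*^{\mathcal C}}\mathcal L = 0$ at this point, and they are satisfied by the adjoint pair thanks to \eqref{18:31} together with the adjoint system \eqref{cs1NS}--\eqref{cs4NS}. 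Consequently only $D_{v_*}\mathcal L \cdot (v_* - \widehat v_*) \geq 0$ survives.

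The last step is a direct computation of $D_{v_*}\mathcal L$ by differentiating \eqref{lagrangeNS} (resp.\ \eqref{lagrangeNS2}) with respect to $v_*$ while holding the other arguments fixed. In the localized case the $v_*$-dependent terms produce
\[
\tfrac{1}{4}\int_{\partial\Omega} (v_*-\widehat v_*)\cdot n\,|V+\widehat v_*+\widehat v_*^{\mathcal C}|^2\,d\gamma + \tfrac{1}{2}\int_{\partial\Omega}(\widehat v_*+\widehat v_*^{\mathcal C})\cdot n\,(V+\widehat v_*+\widehat v_*^{\mathcal C})\cdot(v_*-\widehat v_*)\,d\gamma
\]
\[
+ \int_{\partial\Omega} (v_*-\widehat v_*)\cdot n\,(\omega\times x)\cdot \widehat u\,d\gamma + \langle v_* - \widehat v_*, \widehat\zeta\rangle_{\partial\Omega},
\]
and substituting $\widehat\zeta = \sigma(\widehat v - \widehat u, \widehat p - \widehat q)n$ in the last pairing reproduces exactly the left-hand side of \eqref{lis0.9}. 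The tangential case from \eqref{lagrangeNS2} is simpler: the nonlinear boundary integrals and the rotational term drop out, leaving only the pairing $\langle v_* - \widehat v_*, \widehat\zeta\rangle_{\partial\Omega}$, which delivers \eqref{lis0.9der}.

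The main obstacle I anticipate is justifying the G\^ateaux differentiability of $\Lambda$ along $v_* - \widehat v_*$ at the boundary of the admissible ball: Theorem \ref{zhz} requires the \emph{strict} inequality $\langle \widehat v_*, v_* - \widehat v_*\rangle_{W^{3/2,2}(\partial\Omega)} < 0$ when $\|\widehat v_*\|_{3/2,2,\partial\Omega} = \kappa_0$. Cauchy--Schwarz together with $\|v_*\|_{3/2,2,\partial\Omega} \leq \kappa_0$ gives only the non-strict bound $\langle \widehat v_*, v_* - \widehat v_*\rangle \leq 0$, but strict inequality does hold whenever $\|v_*\|_{3/2,2,\partial\Omega} < \kappa_0$ since then $\langle \widehat v_*, v_*\rangle \leq \kappa_0\|v_*\|_{3/2,2,\partial\Omega} < \kappa_0^2$. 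I would therefore first establish \eqref{lis0.9}/\eqref{lis0.9der} for every $v_*$ in the open ball $\{\|v_*\|_{3/2,2,\partial\Omega} < \kappa_0\}$ and then extend to the closed ball by continuity, using that each term on the left-hand side of \eqref{lis0.9}/\eqref{lis0.9der} depends continuously on $v_*$ in $W^{3/2,2}(\partial\Omega)$.
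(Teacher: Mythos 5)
Your proposal is correct and follows essentially the same Lagrangian--adjoint strategy as the paper: express $J = 2\mathcal{L}$ along states via \eqref{lis0.4}, specialize the multipliers to the adjoint solution $(\widehat u,\widehat\zeta)$ with $\widehat\zeta=\sigma(\widehat v-\widehat u,\widehat p-\widehat q)n$, use the chain rule plus the adjoint equations \eqref{lis0.7}--\eqref{lis0.7bis} (resp. \eqref{derLtau}--\eqref{derLtaubis}) so that only $D_{v_*}\mathcal L$ survives in the directional derivative, and then invoke convexity of the admissible ball and minimality of $\widehat v_*$. The final computation of $D_{v_*}\mathcal L$ and the resulting \eqref{lis0.9}/\eqref{lis0.9der} also match the paper (note, as you implicitly use, that $\ell_{\widehat u}+k_{\widehat u}\times x=\widehat u$ on $\partial\Omega$, so the rotational boundary term in \eqref{lagrangeNS} produces the $(\omega\times x)\cdot\widehat u$ integral in \eqref{lis0.9}).

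Where you differ from the paper --- and in fact improve on it --- is the explicit handling of the boundary case $\|\widehat v_*\|_{3/2,2,\partial\Omega}=\kappa_0$, which the paper leaves implicit. Your open-ball-plus-continuity argument is valid: the left-hand sides of \eqref{lis0.9}/\eqref{lis0.9der} are affine in $v_*$, hence continuous, and the open ball is dense in the closed one. However, there is an even more direct observation that makes the continuity step unnecessary: if $\|\widehat v_*\|=\kappa_0$ and $\|v_*\|\le\kappa_0$, then $\langle\widehat v_*,v_*-\widehat v_*\rangle=\langle\widehat v_*,v_*\rangle-\kappa_0^2\le\kappa_0\|v_*\|-\kappa_0^2\le 0$, and the Cauchy--Schwarz inequality is an equality only if $v_*$ is a nonnegative multiple of $\widehat v_*$ with the same norm, i.e.\ $v_*=\widehat v_*$. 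Thus strict inequality $\langle\widehat v_*,v_*-\widehat v_*\rangle<0$ holds for every $v_*$ in the \emph{closed} ball with $v_*\neq\widehat v_*$; the excluded case $v_*=\widehat v_*$ makes \eqref{lis0.9}/\eqref{lis0.9der} vacuously an equality. So Theorem \ref{zhz} applies directly to every nontrivial direction, and no limiting argument is required. Either route is correct.
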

\begin{proof}
We begin by showing that
\begin{equation}\label{lis0.5}
 \frac{1}{2}
D_{\vstar} J(\widehat v_*) \vstar
=D_{(v,v_*^{\mathcal C})} \mathcal{L}(\Lambda(\widehat v_* ), \widehat v_* , \widehat{u}, \widehat{\zeta}) 
	D_{\vstar} \Lambda(\widehat v_* ) \vstar
+ D_{\vstar} \mathcal{L}(\Lambda(\widehat v_* ), \widehat v_* , \widehat{u}, \widehat{\zeta}) \vstar,
\end{equation}
where the direction $v_*\in {\mathcal V}_\Gamma$ must be taken such that $\langle\widehat v_*, v_*\rangle_{W^{3/2,2}(\partial\Omega)}<0$ if $\|\widehat v_*\|_{3/2,2,\partial\Omega}=\kappa_0$, whereas it can be arbitrary if $\|\widehat v_*\|_{3/2,2,\partial\Omega}<\kappa_0$.

In order to compute
$$
D_{\vstar} J(\widehat v_*) \vstar  = \lim_{h \to 0} \frac{J(\widehat v_*+ h \vstar )  - J(\widehat v_*) }{h} 
$$
we write
$$
\begin{array}{rcl}
\displaystyle
\frac{1}{2} \,\frac{J(\widehat v_*+ h \vstar )  - J(\widehat v_*) }{h} 
& =  & \displaystyle \frac{ \mathcal{L}(\Lambda(\widehat v_*+ h \vstar ), \widehat v_*+ h \vstar , \widehat{u}, \widehat{\zeta})  - \mathcal{L}(\Lambda(\widehat v_*), \widehat v_*, \widehat{u}, \widehat{\zeta})}{h}  
\medskip \\
& = & \displaystyle \frac{ \mathcal{L}(\Lambda(\widehat v_*+ h \vstar ), \widehat v_*+ h \vstar , \widehat{u}, \widehat{\zeta}) -  \mathcal{L}(\Lambda(\widehat v_*), \widehat v_*+ h \vstar , \widehat{u}, \widehat{\zeta})  }{h} 
\medskip \\
&& \displaystyle +
\frac{\mathcal{L}(\Lambda(\widehat v_*), \widehat v_*+ h \vstar , \widehat{u}, \widehat{\zeta}) - \mathcal{L}(\Lambda(\widehat v_*), \widehat v_*, \widehat{u}, \widehat{\zeta})}{h} 
\end{array}
$$
and, by the definition of Gateaux derivative, it is clear that
$$
\lim_{h\to 0}\frac{\mathcal{L}(\Lambda(\widehat v_*), \widehat v_*+ h \vstar , \widehat{u}, \widehat{\zeta}) - \mathcal{L}(\Lambda(\widehat v_*), \widehat v_*, \widehat{u}, \widehat{\zeta})}{h} 
= D_{\vstar} \mathcal{L}(\Lambda(\widehat v_* ), \widehat v_* , \widehat{u}, \widehat{\zeta}) \vstar.
$$
The relation  
$$
\Lambda(\widehat v_*+ h \vstar ) = \Lambda(\widehat v_*) + h D_{\vstar} \Lambda(\widehat v_* ) \vstar + o(h)= \Lambda(\widehat v_*) + h (\widehat{z}_h,\widehat{z}_{h*}^{\mathcal C}),
$$
with $(\widehat{z}_h,\widehat{z}_{h*}^{\mathcal C}) := D_{\vstar} \Lambda(\widehat v_* ) \vstar + o(h)/h $, yields
$$
\begin{array}{rcl}
&&\displaystyle\frac{ \mathcal{L}(\Lambda(\widehat v_*+ h \vstar ), \widehat v_*+ h \vstar , \widehat{u}, \widehat{\zeta}) -  \mathcal{L}(\Lambda(\widehat v_*), \widehat v_*+ h \vstar , \widehat{u}, \widehat{\zeta})  }{h} \medskip \\
&= &\displaystyle\frac{ \mathcal{L}(\Lambda(\widehat v_*) + h (\widehat{z}_h,\widehat{z}_{h*}^{\mathcal C}) , \widehat v_*+ h \vstar , \widehat{u}, \widehat{\zeta}) -  \mathcal{L}(\Lambda(\widehat v_*), \widehat v_*+ h \vstar , \widehat{u}, \widehat{\zeta})  }{h} \medskip \\
&= &\displaystyle D_{(v,v_*^{\mathcal C})} \mathcal{L}(\Lambda(\widehat v_* ) , \widehat v_*+ h \vstar , \widehat{u}, \widehat{\zeta}) (\widehat{z}_h,\widehat{z}_{h*}^{\mathcal C}) + \frac{o(h)}{h} .
\end{array}
$$
Therefore
$$
\lim_{h \to 0} \frac{ \mathcal{L}(\Lambda(\widehat v_*+ h \vstar ), \widehat v_*+ h \vstar , \widehat{u}, \widehat{\zeta}) -  \mathcal{L}(\Lambda(\widehat v_*), \widehat v_*+ h \vstar , \widehat{u}, \widehat{\zeta})  }{h} =
\lim_{h \to 0} D_{(v,v_*^{\mathcal C})} \mathcal{L}(\Lambda(\widehat v_* ) , \widehat v_*+ h \vstar , \widehat{u}, \widehat{\zeta})(\widehat{z}_h,\widehat{z}_{h*}^{\mathcal C})  .
$$
Note that, by Theorem \ref{zhz}, the convergence 
$$(\widehat{z}_h,\widehat{z}_{h*}^{\mathcal C}) =  \frac{\Lambda(\widehat v_*+ h \vstar ) -  \Lambda(\widehat v_*)}{h} \to D_{\vstar} \Lambda(\widehat v_* ) \vstar =: (\widehat{z},\widehat{z}_{*}^{\mathcal C}) \quad (h \to 0)$$  
means that
\begin{equation}\| \widehat{z}_h - \widehat{z} \|_{2,2,\Omega} + \lceil  \widehat{z}_h - \widehat{z}  \rceil_{1,\varpi,\Omega} + \| \widehat{z}_{h*}^{\mathcal C} - \widehat{z}_{*}^{\mathcal C}  \|_{3/2,2,\partial \Omega} \to 0.\label{strongJ2}\end{equation}

In the case of localized controls, we have
\begin{multline}
D_{v} \mathcal{L}_{\Gamma}(\Lambda(\widehat v_* ) , \widehat v_*+ h \vstar , \widehat{u}, \widehat{\zeta}) \widehat{z}_h = 2 \int_{\Omega} D(\widehat{v} - \widehat{u} ):D(\widehat{z}_h) dx 
+ \int_{\Omega} \big(\widehat{v}\cdot \nabla \widehat{u} \big)\cdot \widehat{z}_h\ dx
+ \int_{\Omega} \big(\widehat{z}_h\cdot \nabla \widehat{u}\big)\cdot \widehat{v}\ dx
\\
-\int_{\Omega} [(V\cdot \nabla) \widehat{u}- \omega\times \widehat{u}]\cdot \widehat{z}_h \ dx
- \int_{\partial \Omega} \widehat{z}_h\cdot \widehat{\zeta} \ ds \quad (\widehat{z}_h \in  \mathcal{Y}),\label{ae02NS}
\end{multline}
\begin{multline}
D_{v_*^{\mathcal C}} \mathcal{L}_{\Gamma}(\Lambda(\widehat v_* ) , \widehat v_*+ h \vstar  , \widehat{u}, \widehat{\zeta}) \widehat{z}_{h*}^{\mathcal C} = \frac{1}{4}\int_{\partial\Omega} \widehat{z}_{h*}^{\mathcal C} \cdot n |V + \widehat v_*+ h \vstar  + \widehat  v_*^{\mathcal C}|^2\,d\gamma \\
+\frac{1}{2}\int_{\partial\Omega} (\widehat v_*+ h \vstar  + \widehat  v_*^{\mathcal C})\cdot n (V + \widehat v_*+ h \vstar  +\widehat  v_*^{\mathcal C})\cdot \widehat{z}_{h*}^{\mathcal C} \,d\gamma \\
+\int_{\partial \Omega} \left(\widehat{z}_{h*}^{\mathcal C}\cdot n \right) (\omega \times x)\cdot \widehat u \ d \gamma
+ \langle \widehat{z}_{h*}^{\mathcal C},\widehat{\zeta} \rangle_{\partial \Omega} \quad (\widehat{z}_{h*}^{\mathcal C}   \in  \mathcal{C}_\chi), \label{ae2NSbis} 
\end{multline}
and analogously for $\mathcal{L}_\tau$, so that by taking the limit $h \to 0$ in \eqref{ae02NS} and \eqref{ae2NSbis}, and using \eqref{strongJ2}, we get
$$
\lim_{h \to 0} \frac{ \mathcal{L}(\Lambda(\widehat v_*+ h \vstar ), \widehat v_*+ h \vstar , \widehat{u}, \widehat{\zeta}) -  \mathcal{L}(\Lambda(\widehat v_*), \widehat v_*+ h \vstar , \widehat{u}, \widehat{\zeta})  }{h} = 
D_{(v,v_*^{\mathcal C})} \mathcal{L}(\Lambda(\widehat v_* ) , \widehat v_*, \widehat{u}, \widehat{z}) D_{\vstar} \Lambda(\widehat v_* ) \vstar.$$

Having shown \eqref{lis0.5} and using the definition of the adjoint system, specifically \eqref{lis0.7}--\eqref{lis0.7bis} or \eqref{derLtau}--\eqref{derLtaubis}
we get 
\begin{equation}\label{lis0.8}
 \frac 12 D_{\vstar} J(\widehat v_* ) \vstar
=D_{\vstar} \mathcal{L}(\Lambda(\widehat v_* ), \widehat v_* , \widehat{u}, \widehat{\zeta}) \vstar.
\end{equation}

Now, using that $\mathcal{V}_\tau^{\kappa_0}$ and $\mathcal{V}_{\Gamma}^{\kappa_0}$ are convex sets, we deduce from $J(\widehat v_*+h(v_*-\widehat v_*)\geq J(\widehat v_*)$ for every $v_*\in \mathcal{V}_\tau^{\kappa_0}$ or $\mathcal{V}_\Gamma^{\kappa_0}$ and $h\in (0,1)$ that
\begin{equation}\label{tt02}
\frac 12  D_{\vstar} J(\widehat v_* ) (\vstar-\widehat v_* )
=D_{\vstar} \mathcal{L}_\tau(\Lambda_\tau(\widehat v_* ), \widehat v_* , \widehat{u}, \widehat{\zeta}) (\vstar-\widehat v_* )\geq 0, \qquad \forall \vstar\in \mathcal{V}_\tau^{\kappa_0},
\end{equation}
and that
\begin{equation}\label{tt03}
 \frac 12  D_{\vstar} J(\widehat v_* ) (\vstar-\widehat v_* )
=D_{\vstar} \mathcal{L}_\Gamma(\Lambda_\chi(\widehat v_* ), \widehat v_* , \widehat{u}, \widehat{\zeta}) (\vstar-\widehat v_* )\geq 0,
\qquad \forall \vstar\in \mathcal{V}_\Gamma^{\kappa_0}.
\end{equation}

Using \eqref{lagrangeNS}, \eqref{lagrangeNS2} and \eqref{18:31}, we deduce \eqref{lis0.9}
and \eqref{lis0.9der}.
\end{proof}

Note that in Theorem \ref{main}, if in particular $\|\widehat v_*\|_{3/2,2,\partial\Omega}< \kappa_0$, then conditions \eqref{lis0.9} and \eqref{lis0.9der} become respectively
\begin{equation}\label{lis1.01}
\left[\sigma(\widehat{v}-\widehat{u},\widehat{p}-\widehat{q})n
+\frac{1}{4} |V+\widehat v_*+\widehat  v_*^{\mathcal C}|^2 n
\\
+\frac{1}{2}\big((\widehat v_*+\widehat  v_*^{\mathcal C})\cdot n\big) (V+\widehat v_*+\widehat  v_*^{\mathcal C})
+((\omega \times x)\cdot \widehat u ) n\right]
 \perp \mathcal{V}_{\Gamma}
\end{equation}
and
\begin{equation}\label{lis1.0}
\sigma(\widehat{v}-\widehat{u},\widehat{p}-\widehat{q})n \perp \mathcal{V}_\tau
\end{equation}
in $L^2(\partial\Omega)$.

\end{document}